\documentclass[12pt,a4paper]{article}

\usepackage{amssymb}
\usepackage{amsfonts}
\usepackage{graphicx}
%
%
%

\overfullrule=2pt\typeout{OVERFULLRULE}\typeout{OVERFULLRULE}
\newcommand{\bbb}[1]{\mathbb #1}

\newcounter{Bildchen}
%

\newenvironment{remark*}{\medskip\noindent{\bf Remark.}}{\medskip}
\newenvironment{remarks*}{\medskip\noindent{\bf Remarks.}}{\medskip}

\newenvironment{proof}{\noindent{\sc
Proof: }}{\hfill{\scriptsize $\square$}\medskip}


\newcommand{\cp}{\circ\ppi}
\newcommand{\auf}[1]{\circ #1}

\newcommand{\mapon}{cell decomposition of }

\newcommand{\refpunkt}{.}
\newcommand{\refthpunkt}{.}

\newcommand{\cell}{cell}
\newcommand{\cells}{cells}

%
%
%
\newcommand{\Figur}[4]{
 \begin{minipage}{0.95\textwidth}
  \refstepcounter{Bildchen}
  \label{#1}
    \begin{center}
    \hspace*{#3em}
\includegraphics[height=#4cm]{#1.eps}
    \end{center}
    \begin{center}
    {\sc #2}
    \end{center}
    \medskip
 \end{minipage}}
%
%


\newenvironment{versetzt}{\begin{list}{}{\setlength{\leftmargin}{8mm}
\setlength{\topsep}{0mm}\setlength{\itemsep}{0mm}
\setlength{\parsep}{0mm}}}{\end{list}}

\newenvironment{vversetzt}[1]{\begin{list}{}{\setlength{\leftmargin}{#1 mm}
\setlength{\topsep}{0mm}\setlength{\itemsep}{0mm}
\setlength{\parsep}{0mm}}}{\end{list}}




\newcommand{\wt}{\widetilde}
\newcommand{\wh}{\widehat}
\newcommand{\wb}{\overline}

\newcommand{\NS}{{\bf S}^2}
\newcommand{\NH}{{\bf H}^2}
\newcommand{\NE}{{\bf E}^2}

\newcommand{\HY}[1]{{\bf H}^#1}
\def\ppi{\mathop{u\hspace*{-0.5ex}c}\nolimits}

\def\dist{\mathop{\rm dist}\nolimits}
\def\vol{\mathop{\rm vol}\nolimits}
\def\area#1{{\rm area}(#1)}
\def\mod{\mathop{\rm mod}\nolimits}

\def\tr#1{\mathop{\Delta_{#1}}\nolimits}

\newcommand{\new}[1]{{\em #1}}

\newcommand{\zweierindex}[2]{\scriptstyle #1 \atop\scriptstyle #2}



\def\stH#1{\mathop{({\rm{Iso}}\,{\HY 3})_{#1}}\nolimits}
\def\ch#1{\mathop{{|#1|}}\nolimits}



\newcommand{\CS}{{\bf S}^2}
\newcommand{\CO}{{\rm{conf}}\,{\bf S}^2}
\def\st#1{\mathop{({\rm{conf}}\,{\bf S}^2)_{#1}}\nolimits}
\newcommand{\MR}{{{\bf M}_{\rm reg}}}
\newcommand{\MS}{{{\bf M}_{\rm sing}}}
\newcommand{\MD}{{{\bf M}_{\rm disk}}}
\def\reg#1{\mathop{\rm reg}\nolimits(#1)}

\def\H#1{{\cal H}(#1)}

\def\Mi#1#2{{{\rm C}_{#1}( {#2})}}



\def\cK{{\cal A}}
\def\ccK{\cK(\wtV)}
\def\Dual{\cK}
\def\dual#1{\Dual(#1)}

\def\CH#1{{\vert #1\hspace{0.1ex}\vert}}



\newcommand{\V}{V(T)}
\newcommand{\E}{E(T)}
\newcommand{\F}{F(T)}
\newcommand{\Se}{S(T)}
\newcommand{\wtV}{V(\wt T)}
\newcommand{\wtE}{E(\wt T)}
\newcommand{\wtF}{F(\wt T)}
\newcommand{\wtS}{S(\wt T)}
\def\wttheta{{\wt\theta}}

\newcommand{\br}[1]{\langle #1 \rangle}
\newcommand{\proj}[1]{| #1 |}
\newcommand{\rev}[1]{-#1}



\newcommand{\alp}{\widehat\alpha}
\newcommand{\bet}{\widehat\beta}
\newcommand{\gam}{\widehat\gamma}



\newcommand{\WC}{{\cal F}_{\Delta,\Sigma}}
\newcommand{\WT}{{\cal F}_{\Delta}}
\newcommand{\WCf}{{\cal F}_{\Delta,\Sigma,f}}

\newcommand{\clWC}{\overline{{\cal F}}_{\Delta,\Sigma}}
\newcommand{\clWT}{\overline{{\cal F}}_{\Delta}}
\newcommand{\clWCf}{\overline{{\cal F}}_{\Delta,\Sigma,f}}

\def\ex{\mathop{\widehat\eta}\nolimits}



\def\LaT{\mathop{{\rm L}_{\Delta}}\nolimits}
\def\LaC{\mathop{{\rm L}_{\Delta,\Sigma}}\nolimits}
\def\LaCf{\mathop{{\rm L}_{\Delta,\Sigma,f}}\nolimits}
\def\lob{\mathop{\cal{L}}\nolimits}

\def\Imi#1{\mathop{{\cal I}}_{\mbox{\tiny$\pi\!\!-\!\!#1$}}\nolimits}
\def\Ino#1{\mathop{{\cal I}}_{\mbox{\tiny$#1$}}\nolimits}



\def\len{\mathop{l}\nolimits}
\newcommand{\sect}[1]{\mbox{{\footnotesize$(#1)$}}}



\newcommand{\ra}[0]{\rightarrow}
\newcommand{\ek}[1]{[\mbox{{\footnotesize$#1$}}]}



\def\Lim#1#2{{\bot}^{\!\!{#1}}_{\!\partial}#2}
\def\NorIndex#1#2#3{\langle\!\langle #1,#2,#3 \rangle\!\rangle}
\def\Index#1#2#3{\langle\!\langle\mbox{{\footnotesize$#1,#2,#3$}}\rangle\!\rangle}
\def\Cone#1#2{{\rm Cone}_{#1}\left(#2\right)}
\def\fac#1{{{\frak f}}({#1})}

\def\Fig#1#2{{\Omega_{#1}( {#2})}}
\def\ep#1#2{{\varepsilon_{#1}( {#2})}}
\def\Ort#1#2{{O_{#1}( {#2})}}
\def\Fau#1{{\cal V}(#1)}

\def\Dest{\Delta^*}

\def\wab{\omega_\gamma (\alpha,\beta)}
\def\wba{\omega_\gamma (\beta,\alpha)}
\def\wabst{\omega_{\gamma^*} (\alpha^*,\beta^*)}



\listfiles
\newlength{\figurwidth}
\setlength{\figurwidth}{37em}
\addtolength{\topmargin}{-0.5cm}
\addtolength{\oddsidemargin}{-0.5cm}
\addtolength{\evensidemargin}{-0.5cm}
\addtolength{\textwidth}{1.1cm}
\addtolength{\textheight}{1cm}

\begin{document}
\newcommand{\Paragraph}[1]{\section{#1}}
\newcommand{\abschnitt}[1]{\subsection{#1}}
\newenvironment{beweis}{\begin{proof}}{\end{proof}}
\newtheorem{theorem}{Theorem}
\newtheorem{theorem*}[theorem]{Theorem}
\newtheorem{lemma}{Lemma}
\newtheorem{proposition}{Proposition}
\newtheorem{proposition*}[proposition]{Proposition}

\author{Walter Br\"agger}
\title{A Uniformisation of Weighted Maps\\ on Compact Surfaces}

\maketitle

\bigskip

This article is a comprised version of my doctoral thesis completed under the supervision of Norbert A'Campo at Basel University in 1995. For various reasons, I never published the thesis up to now. After all with a delay of two-and-a-half decades, I decided to catch up on it.

\vskip3em

\begin{abstract}
In 1970 Andreev [An] proved a theorem concerning the existence 
of polyhedra with preassigned 
dihedral angles in the hyperbolic 3-space $\HY 3$. 
Thurston [Tu] reinterpreted 
this theorem in terms of patterns of disks on the 2-sphere and he 
observed the existence of disk patterns with preassigned 
overlap angles  not exceeding $\pi/2$
on any 
compact surface $X$. These disk patterns can be interpreted 
as convex subsets of $\HY 3$ which are 
invariant under a group action $\pi_1(X)\times\HY 3\rightarrow \HY3$. 
In this paper we prove the existence and uniqueness of disk patterns 
on compact surfaces with preassigned angles in $]0,\pi[$, 
provided that the system of preassigned angles fulfill an 
additional condition. 
In terms of the corresponding convex subset of $\HY 3$ this condition 
states that the 
extreme points lie 
on the sphere at infinity. 
We prove the existence and uniqueness of disk patterns by
 a refinement of a variatonal method introduced in [Br]. In this process 
we will characterize a disk pattern as a critical point of a 
functional. Furthermore, it will turn out that its critical value 
is the volume of a fundamental domain of the corresponding 
convex subset of $\HY 3$. 
\end{abstract}

\newpage
\tableofcontents
\section*{Introduction}
\addcontentsline{toc}{section}{Introduction}
Let $T$ be a cell decomposition of a compact surface $X$ without boundary, $\E$ its set of edges and 
$\theta :\E\longrightarrow \ ]0,\pi[$ a function. Roughly speaking, we study 
the existence and uniqueness of a Riemannian metric 
of constant curvature on $X$ and of a collection of disks such that 
\begin{vversetzt}{3}
\item[-]
the centers of the disks are the vertices of $T$,
\item[-]
two disks whose centers are joined by an edge $e$ meet in 
an angle $\theta(e)$,
\item[-]
if the centers of some disks form the vertices of a cell of $T$, then 
the bounding circles intersect in a point.  
\end{vversetzt}
\smallskip
\label{intro}
For our purpose it is more convenient to consider such collections of disks 
from a slightly different viewpoint (Chapter~\ref{preliminaries}):
Let $\HY 3$ be the hyperbolic 3-space and $\partial\HY 3$ its boundary.
 The metric structure of $\HY 3$ endows $\partial \HY 3$
 with a natural conformal structure such that the isometries of $\HY 3$ correspond  
to the conformal transformations of $\partial\HY 3$. We call an open subset 
 ${\cal O}$ of 
$\partial\HY 3$ a 
homogeneous domain if its 
stabilizer 
$\stH {{\cal O}}$  
in the isometry group of $\HY 3$  acts transitively on ${\cal O}$. 
Let $\wt X$ be a simply connected homogeneous domain  and $\pi_1(X)$ a discrete 
subgroup of $\stH {\wt X}$ such that 
 $\wt X/\pi_1(X)$ is homeomorphic to $X$. 
Furthermore, let $P$ be a $\pi_1(X)$-invariant subset of $\wt X$ such that $P/\pi_1(X)$ is finite. 
We denote the convex hull of the set $P$ in $\HY 3$ by $\ch P$.
 If $\dim \ch{P}=3$ we call this convex set an \new{ideal $\pi_1(X)$-polyhedron}. 
Every ideal $\pi_1(X)$-polyhedron has an `external representation' as the intersection of the 
halfspaces supporting the facets of $\ch P$.
Since there is a canonical bijection between the set of 
halfspaces of $\HY 3$ and the set of conformal disks in $\partial\HY 3$, 
this external representation yields a 
 $\pi_1(X)$-periodic collection of conformal disks in 
$\wt X$. Projecting these disks to $\wt X/\pi_1(X)$, we get a finite collection of 
`disks' in $\wt X/\pi_1(X)$. 
The combinatorics of this projected disk collection is described by a 
cell decomposition of $X$ such that each vertex
 corresponds to a disk  and 
each {\cell} corresponds to a point of the set $P/\pi_1(X)$. 
The dihedral angles of the $\pi_1(X)$-polyhedron can be considered as weights 
on the edges of the cell decomposition. 

Provided that $\theta$ fulfills a necessary condition, we prove the existence of an ideal $\pi_1(X)$-polyhedron whose combinatorics  
correspond to $T$ and whose dihedral angles 
are described by the weight function $\theta$ (Chapters 2 - 4). 
This ideal  $\pi_1(X)$-polyhedron is unique up to isometries of $\HY 3$ and  
 yields a simply connected 
homogeneous domain $\wt X$ in such a way that 
 the conformally flat surface $\wt X/\pi_1(X)$ is uniquely determined by $(T,\theta)$. 

In Chapter~5 finally, we express the volume of a fundamental domain 
of a $\pi_1(X)$-polyhedron in terms of the corresponding disk collection.

\Paragraph{Preliminaries}\label{preliminaries}
\abschnitt{A Model for hyperbolic $3$-space \protect\boldmath $\HY 3$.}\label{metric}
A sphere $S$ in $\HY 3$ endowed with the induced 
 Riemannian metric is a surface of positive constant curvature.
 With geodesic rays perpendicular to $S$, we 
can export this metric to the boundary $\partial\HY 3$. 
Two such metrics on $\partial \HY 3$ are similar if and only if the corresponding 
spheres have the same center. Hence, we get a model characterizing the points of 
$\HY 3$ as similarity classes of Riemannian metrics. In this section we will briefly show how to 
express the attributes and objects of $\HY 3$ in this model.

Let $\CS$ be the standard conformal 2-sphere
and $\CO$ its group  of conformal automorphisms, i.e. $\CS$ is conformally 
equivalent to ${\bf P}^1{\bbb C}$ and the subgroup 
of orientation preserving elements of $\CO$ is isomorphic to 
${\bf PSL}(2,{\bbb C})$. 
A topological circle in $\CS$ is called 
 a \new{ conformal circle} if it is the set of 
fixed points of an orientation reversing involution in $\CO$. A connected 
component of the complement of a conformal circle is called a (conformal) \new{open disk}.

We call a non-empty connected open subset ${\cal O}$ of $\CS$ a \new{homogeneous domain} if 
its stabilizer $\st {\cal O}$ in 
$\CO$ acts transitively on ${\cal O}$. 
There are three types of simply connected homogeneous domains. Namely, any such domain 
 ${\cal O}$ has the form 
${\cal O}=\CS\setminus A$, where $A$ is either empty, a point or a closed conformal disk. 
Accordingly, ${\cal O}$ equipped with the conformal structure induced by 
$\CS$ is conformally equivalent to $\NS$, the Euclidean plane $\NE$ or 
the hyperbolic plane $\NH$. 
 The group of  conformal automorphisms of ${\cal O}$ is
 just $\st {\cal O}$. 

For every simply connected homogeneous domain which is a proper subset of 
$\CS$ there is only 
 one similarity class of 
complete Riemannian metrics of constant curvature   
having the conformal structure induced by $\CS$. 
We will consider these metrics  as `degenerated' metrics on $\CS$. 
This leads to the following definition: 
An inner product structure $n$ on $\CS$ 
(i.e. an inner product on every fiber of the tangent bundle) is called a \new{singular} 
(Riemannian) metric if there exists a simply connected homogeneous domain
  ${\reg n}$ of $\CS$ 
such that the following two conditions are satisfied: 
\begin{vversetzt}{3}
\item[-]
The restriction of $n$ to ${\reg n}$ is a 
complete Riemannian metric of constant curvature (called the curvature of $n$) 
having the conformal structure induced by \nolinebreak$\CS$. 
\item[-]
For every $q\in \CS\setminus {\reg n}$ 
the inner product of any two tangent vectors at $q$ is \nolinebreak$+\infty$. 
\end{vversetzt}
\smallskip
We call $\reg {n}$
 the \new{regular} domain
 of $ n$ and $\CS\setminus{\reg n}$ 
the \new{singular} domain. 
In the following we consider similar singular metrics 
(i.e. similar on their regular domains) 
as equal and we denote the set of similarity classes of singular metrics by 
$\MS$. We identify every singular metric $m\in \MS$ with a representative of 
constant curvature $-1$, $0$ or $+1$ and 
we define the metric space $(\reg m,m)$ as the set $\reg m$ 
equipped with the metric $m$. The similarities of $(\reg m,m)$ 
(i.e. isometries if the curvature is non-zero) 
 are the elements of the stabilizer $\st m$.

Since $\CO$ maps homogeneous domains to homogeneous domains, $\CO$ acts on $\MS$. 
Corresponding to the type of the regular domain of a singular metric,
 we divide $\MS$ into 
three disjoint subsets: We define $\MR$ (respectively, $\partial \MR$, $\MD$) 
to be the set of all singular metrics whose singular domain is 
empty (respectively, a point of $\CS$, a disk in $\CS$). 
 Note that these three sets are just the orbits of the 
group action $\CO\times\MS\longrightarrow\MS$. 
We call the elements of $\MR$ (respectively, $\MD$) \new{regular metrics}
(respectively, \new{disk metrics}). 
We already mentioned that a metric $m\in\MS\setminus\MR$ is determined by its regular domain. 
Henceforth we will no more distinguish between a metric $m\in\MD$ and the open disk $\reg m$. 
In the same sense we identify a metric $m\in\partial\MR$
 with its singular domain. 
This yields a bijection between $\CS$ and $\partial\MR$. 

Our next task is to equip $\MS$ with a topology: 
A sequence $m^i\in\MS$ converges if there exists a singular 
 metric $m\in \MS$, a compact subset $K\subset\CS$ with 
non-empty interior, and a 
sequence $c_i$ of positive real numbers such that 
\begin{vversetzt}{3}
\item[-]
$K$ is a subset of $\reg m ,\reg {m^i}$ for all but finitely 
many $i$, and 
\item[-]
the restriction 
of $c_i m^i$ to $K$ converges to the restriction of $m$ to $K$. 
\end{vversetzt} 
With the topology given by this definition, $\MR$ and $\MD$ are open sets in $\MS$ bounded 
by $\partial\MR$. Furthermore, the compactification of $\MR$ in $\MS$ is just  $\MR\cup\partial\MR$. 

Since any two conformal 
structures on $\CS$ are conformally equivalent, the group $\CO$ 
acts transitively on $\MR$. 
The stabilizer subgroup of any point $m\in \MR$ is isomorphic to ${\bbb O}(3)$ 
and therefore a maximal 
compact subgroup of $\CO$. 
Hence, $\MR$ is a homogeneous space. 
In fact, we now define a distance function $\dist:\MR\times\MR\longrightarrow {\bbb R}_+$ 
such that $(\MR,\dist)$ is a metric space of constant curvature. 
Let $n,m\in\MR$. If $n_x$ denotes the 
inner product induced by $n$ at the point 
$x\in\CS$ there is a smooth function $f:\NS\longrightarrow{\bbb R}_+$ 
such that $m_x=f(x)\cdot n_x$. We define  
$$\dist(m,n):=\frac12\max_{\ x\in\CS} \log\frac{m_x}{n_x}. $$
With this notation  the metric space $(\MR,\dist)$ is 
isometric to the 3-dimensional 
hyperbolic space $\HY 3$ and 
its isometries are the elements of $\CO$. 
 The above compactification then corresponds to
 the Busemann compactification of $\HY 3$. 

Note that the topological 
space 
$\MS$ is homeomorphic to ${\bbb R}^3$. In fact, 
using the Poincar\'e model of $\HY 3$, we first identify $\MR$ with 
the open unit ball ${\bf B}^3$ of ${\bbb R}^3$.  
Let $l$ be a line passing through $0\in{\bbb R}^3$ and define $z,z'$ to be the two 
piercing points of $l$ and $\partial {\bf B}^3$. 
If $x$ is a point on $l$ traveling from 
$0$ towards $z$, then the point $x$ corresponds to a regular metric on the boundary 
$\partial{\bf B}^3=\CS$. If $x=z$ this metric explodes 
at the point $z\in\partial{\bf B}$, i.e. $z$ corresponds 
to a metric with singular domain $\{z\}$.  Continuing the travel along $l$, 
we define $x$ to be a singular metric whose singular domain is a 
closed disk in $\partial{\bf B}^3$ centered at $z$. This disk  
increases if $x$ moves away from $z$ and tends to $\partial{\bf B}^3\setminus \{z'\}$ if $x$ tends to 
infinity. 

Summarizing all these identifications, we get the following diagram
$$\begin{array}{ccccccc}
\MR&\subset&\hspace{-0.0em}\MR\cup\partial\MR\hspace{-0.0em}&\subset&
\MS&\hspace{1em}\supset&\hspace{-1.75em}\MD.
\\
\shortparallel&&\hspace{-0.4em}\shortparallel\hspace{-0.0em}&&
\hspace{-0.4em}\shortparallel&&\hspace{-0.5em}\shortparallel
\\
\,\HY 3&&\hspace{-1.7em}\ \,\HY 3\cup\CS\hspace{-0.4em}&&
{\bbb R}^3&&\hspace{-1em}\{\mbox{conformal disk}\}
\end{array}$$ 

For every disk metric $m$ there is a transformation 
$\varphi_m\in\CO$ 
such that 
$\partial\reg m=\{x\in\CS\mid\varphi_m(x)=x\}$. 
We define $\H m$ as the hyperplane 
$\{x\in\MR\mid\varphi_m(x)=x\}$.
Two disk metrics $m$, $n$ are said to \new{overlap} if 
the bounding circles of their regular domains cut in two distinct points, i.e. $\H m\cap\H n\neq\emptyset$. 
 In this case the closure of 
$\reg m\cap\reg n$ is a two-gon in $\CS$. The angles at the two vertices of 
this two-gon coincide. We call this angle the \new{angle enclosed by $m$ and $n$}.
%
\abschnitt{Maps on Compact Surfaces.}
Let $X$ be a compact surface, $\ppi:\wt X\longrightarrow X$ 
its universal covering and 
$\pi _1(X)$ the group of covering transformations. 
Henceforth we consider only compact surfaces which are connected 
and have no boundary. 
Let $\wt T$ be a $\pi_1(X)$-invariant decomposition of $\wt X$ into 
simply connected closed subsets (called the \new{{\cells}} of $\wt T$)
 by some arcs (called the \new{edges} of $\wt T$) 
joining pairs of points (called the \new{vertices} of $\wt T$). 
It is understood that no two edges have a common interior point,
 and that the intersection of two different {\cells} is empty, a vertex 
or an edge. Projecting the {\cells}  of $\wt T$ to $X$ yields 
a decomposition $T$ of $X$ into a finite number of closed subsets. We call 
$T$ a \new{cell decomposition} of $X$ or a \new{map} on $X$, $\wt T$ the \new{lifted} cell decomposition or \new{lifted} map, 
and the projection of a vertex (respectively, edge, {\cell}) of $\wt T$ a 
\new{vertex} (respectively, \new{edge}, \new{{\cell}}) of $T$. 
 We denote the set 
 of vertices (respectively, edges, {\cells}) of $T$
 by $\V$ (respectively, $\E$, $\F$). 
Let $a,b\in \V\cup\E\cup\F$. We call $a$ and $b$ \new{incident} if 
$a\subset b$ or $b\subset a$ and we define
\begin{eqnarray*} \br{a,b}&:=&\left\{\begin{array}{lll}
             1&\mbox{if $a$ is incident to $b$}\\
            0&\mbox{else}.
\end{array}\right.  
\end{eqnarray*} 
The sets  $\wtV$, $\wtE$, $\wtF$ and the incidence relations in $\wt T$ 
are defined in the same manner. 

In the current paper we assume that  every edge of a cell decomposition is
incident to two different vertices - the results can be extended to the more general case without difficulties. 

%
%
\abschnitt{Disk Configurations.}
\label{configuration}
In this section we will give a precise definition of collections of disks 
producing $\pi_1(X)$-polyhedra. 
Let $T$ be a cell decomposition of a compact surface $X$ 
and $\theta : \E\longrightarrow \ ]0,\pi[$ a so called weight function. 
We lift $\theta$ to a function $\wt \theta:\wtE\longrightarrow \ ]0,\pi[$ 
by defining $\wttheta (e):=\theta\cp(e)$. 
A function $\cK:\wtV\longrightarrow \MD$ is called a 
\new{$(T,\theta)$-configuration} if there is a homeomorphism $\Phi$ from $\wt X$ to 
a simply connected homogeneous domain $\reg\cK$ of $\CS$ 
(called the regular domain of $\cK$) such that 
 the following conditions hold:
\begin{versetzt}
\item[{A1)}]
The elements of the group 
$\{\Phi\circ g\circ\Phi^{-1}\mid g\in\pi_1(X)\}$ are restrictions of elements of 
$\CO$, 
\item[{A2)}]
If $g\in\pi_1(X)$ and $g_\Phi:=\Phi\circ g\circ\Phi^{-1}$, then 
$\cK\circ g=g_\Phi\circ \cK$
\item[{A3)}]
If two vertices $v,w$ of $\wt T $ are joined
 by an edge $e$, then the disk metrics $\dual v$ and $\dual w$
 overlap and enclose an angle $\wttheta(e)$, 
\item[{A4)}]
If $v_1,v_2\ldots,v_n$ are the vertices of a {\cell} $f$ of $\wt T$, then 
the circles bounding the regular domains of  ${\dual{v_1}}$, ${\dual{v_2}},\ldots,{\dual{v_n}}$ intersect in 
a point $\dual f$.  
\item[{A5)}]
$\reg\cK$ is the disjoint union of  
$\displaystyle{\!\!\bigcup_{v\in\wtV}\!\!\reg{\dual v}}\ $ and  $\  \displaystyle{\!\!\bigcup_{f\in\wtF}\!\!\dual f}.$
\end{versetzt}
\begin{remarks*}
Henceforth we identify  
$g\in\pi_1(X)$ with  $g_\Phi$ and we consider $\pi_1(X)$ as a subgroup of $\CO$. 
The set $\MD$ is just the set of conformal disks of $\CS$. 
Hence, 
a $(T,\theta)$-configuration is a $\pi_1(X)$-equivariant assignment of open disks. 
If $f$ is a cell of $\wt T$ then $\dual f$ is the only point 
 contained in the closure of 
every disk assigned to a vertex incident to $f$. Therefore, the extension of $\Dual$ to the set 
$\wtF$ remains $\pi_1(X)$-equivariant. 
\newline
Since $X\simeq\reg\cK/\pi_1(X)$, the group $\pi_1(X)$ acts properly discontinously on $\reg\cK$. We claim that 
there is always a metric $m\in\MS$ such that $\reg m=\reg\cK$ 
and  $\pi_1(X)$ is a subgroup of the isometry group of $(\reg\cK,m)$. 
In fact, if $\reg \cK=\CS$, then the 
 group $\pi_1(X)$ is trivial or generated by an 
involution according as $X$ is homeomorphic to the sphere or to the 
projective plane. 
Since $\pi_1(X)$ acts without fixed point on $\CS=\partial\MR$, Brouwer's Fixed-Point 
Theorem states that 
there is an element $m\in\MR$ fixed under $\pi_1(X)$. Hence, $\pi_1(X)$ 
is a subgroup of the isometry  group $\st m$ of $(\CS,m)$. 
 If $\reg\cK \neq\CS$ there is a  
metric $m\in\MS$ whose regular domain equals the regular domain of 
$\cK$. Hence, $\pi_1(X)$ is a subgroup of $\st m$. The elements of this 
group are the similarities of the metric space $(\reg\cK,m)$.  
Since every similarity which is not an isometry 
 fixes a point in $\reg\cK$, the elements of $\pi_1(X)$ have to be
 isometries in $(\reg m,m)$.
\end{remarks*}
\abschnitt{Polyhedral Weight Functions.}
\label{PWF}
Our next aim is to develop some necessary conditions for a 
function $\theta:\E\longrightarrow \ ]0,\pi[$ to be the weight function of 
a $(T,\theta)$-configuration. We start with some notation. 
A nonempty ordered family ${\cal F}=(e_1,\ldots,e_n)$ of edges of $T$ is called a \new{chain} 
of edges if there exists a continuous, locally injective path 
\newcommand{\weg}[2]{#1_{#2}}
$\weg{\gamma}{{\cal F}}:[0,n]\rightarrow X$ 
such that for every $i\in\{1,\ldots,n\}$ its  
restriction to the interval 
$[{i-1},i]$ is injective and  $\weg{\gamma}{{\cal F}}([{i-1},i])=e_i$. 
If $\weg{\gamma}{{\cal F}}$ is a loop, then we call 
${\cal F}$ a \new{loop} of edges. 

A loop ${\cal F}$ of edges is called \new{contractible} if  
$\weg{\gamma}{{\cal F}}$ is contractible. 
A contractible loop of edges is called \new{reduced} if there is no subfamily 
which is a contractible loop of edges. 
Observe, that  the lifts of 
$\weg{\gamma}{{\cal F}}$ are simple closed curves in $\wt X$ if ${\cal F}$ is a  reduced contractible  loop of edges. 
Furthermore, if $f$ is a cell of $\wt T$, then there is a reduced 
contractible loop of edges ${\cal F}$ such that 
$f$ is bounded by a lift of 
$\weg{\gamma}{{\cal F}}$. 

Let $\theta : \E\longrightarrow \ ]0,\pi [$ be a weight function and 
assume that $\cK$ is a $(T,\theta)$-configuration.
Let ${\cal F}=(e_1,\ldots,e_n)$ be a reduced contractible loop of edges, 
$\weg{\wt\gamma}{{\cal F}}$ a lift of $\weg{\gamma}{{\cal F}}$ 
and $v_1,\ldots,v_n$ the vertices of $\wt T$ along the simple closed curve 
$\weg{\wt\gamma}{{\cal F}}$. 
Conditions A3-A5 imply that the circles  
 $\partial\reg{\dual{v_1}},\ldots,\partial\reg{\dual{v_n}}$ have a point in common 
 if and 
only if $\weg{\wt\gamma}{{\cal F}}$ 
 forms the boundary of a {\cell} of $\wt T$. 
In terms of the weight function $\theta$ this can be expressed
 in the following way (Figure~\ref{polyhedral}):
\begin{versetzt}
\item[{B1)}]
$(\pi-\theta(e_1))+(\pi-\theta(e_2))+\cdots+(\pi-\theta(e_n))\geq 2\pi$.
\item[{B2)}]
$(\pi-\theta(e_1))+(\pi-\theta(e_2))+\cdots+(\pi-\theta(e_n))=2\pi$ 
if and only if $\weg{\wt\gamma}{{\cal F}}$ bounds a cell of $\wt T$. 

\end{versetzt}
\smallskip\indent
We call a weight function $\theta$ of an arbitrary cell decomposition of a 
compact surface \new{polyhedral} if Conditions {B1} and {B2} 
are fulfilled for every reduced contractible loop of edges. 

\Figur{polyhedral}%
{\vspace*{-1em}
Figure~\ref{polyhedral}{\rm{a}}: $\displaystyle{\sum_i\pi-\theta(e_i)=2\pi}$
	\hspace*{2em}
	Figure~\ref{polyhedral}{\rm{b}}: $\displaystyle{\sum_i\pi-\theta(e_i)>2\pi}$\hspace*{2em}}{-1}{5}

%
%
%
\begin{remark*}
There are cell decompositions on compact surfaces which do not admit a polyhedral 
weight function. 
As an example consider the truncated tetrahedron,
 i.e. a tetrahedron whose vertices are cut off by planes parallel to the 
opposite {\cell} 
(Figure~\ref{trunc}). 
The surface of this figure admits a cell decomposition $T$
with four triangles and four hexagons. Let $\E$ be the set of edges of $T$, 
$E$ the subset of those edges incident to a triangle and assume that 
$\theta:\E\longrightarrow \ ]0,\pi[$ is a polyhedral weight function. 
Since every edge is incident to a hexagon, we get the following contradiction:
$$4\cdot 2\pi=\sum_{e\in E}\pi-\theta(e)< \sum_{e\in \E}\pi-\theta(e)<4\cdot 2\pi.$$
Therefore, the existence of a polyhedral weight function is 
a combinatorial characteristic of a cell decomposition. 
Nevertheless, on every compact surface there exist numerous cell decompositions 
admitting a polyhedral weight function. As an example we set up 
cell
 decompositions with regular {\cells}, i.e. there is an integer $n$ such that 
 every {\cell}
 is homeomorphic to an Euclidean polygon with precisely $n$ edges. 
The function $\theta:\E\longrightarrow \ ]0,\pi[$, $e\mapsto 2\pi/n$ is then 
polyhedral.  
For more details about polyhedral weight functions of $\NS$ see [Ho].

\vspace*{1ex}
\Figur{trunc}{Figure~\ref{trunc}: Truncated Tetrahedron}{0}{4}

\end{remark*}
\abschnitt{Disk Packings.}
Let $T$ be a cell decomposition of a compact surface $X$ such that  every {\cell} 
 is homeomorphic to an Euclidean quadrangle. Then the weight function 
$e\mapsto\theta(e)=\pi/2$, $\forall e\in \E$ is polyhedral.  
If $\cK$ is a $(T,\theta)$-configuration and $v$, $w\in\wtV$ are 
incident to a common {\cell} but 
 not joined by an edge, then 
the disks $\reg{\dual v}$ and $\reg{\dual w}$ are tangent. 
Hence, the existence of  a $(T,\theta)$-configuration yields the 
existence of a \new{disk packing}, i.e. a collection of tangent disks. 
For more details about disk packings see [CV]. 
Figure~\ref{packing}b$\ $shows a disk packing on a torus whose 
combinatorics is described by a triangulation. Figure~\ref{packing}a$\ $shows 
the associated $(T,\theta)$-configuration.

\vspace*{3ex}
\Figur{packing}
{\vspace*{-1ex}\hspace{-4em}Figure~\ref{packing}{\rm{a}}:  
	$(T,\theta)$-{\rm configuration}
	\hspace*{4.5em}Figure~\ref{packing}{\rm{b}}: 
	{\rm Disk packing}}{-2.5}{4.8}

\abschnitt{The Main Theorem.}
In this work we establish that Conditions B1 and B2 are sufficient 
for the existence of a $(T,\theta)$-configuration. 
Note that if $\cK$ is a $(T,\theta)$-configuration and $\Phi$ is an element of $\CO$, 
then $\Phi\circ\cK$ is again a $(T,\theta)$-configuration. Hence, the group 
$\CO$ acts on the set of all $(T,\theta)$-configurations. 
We will prove the following:

\begin{theorem*}%
\label{thm}
Let $T$ be a cell decomposition of a compact surface $X$, $\E$ its set of edges
 and $\theta:\E\longrightarrow \ ]0,\pi[$ a 
polyhedral weight function. 
Then there exists a  $(T,\theta)$-configuration which is unique up to 
$\CO$.
\end{theorem*}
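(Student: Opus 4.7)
The plan is to apply a variational method, as foreshadowed by the abstract and following [Br]. First, parametrize the space of candidate configurations modulo $\CO$ by a finite-dimensional open set $\Omega$: quotient $\wtV$ by $\pi_1(X)$, pick one real ``size'' parameter $r_v$ per orbit, and build a $\pi_1(X)$-equivariant family of disks so that along each edge $e$ incident to $v,w$ the two disks $\dual v$ and $\dual w$ already enclose the prescribed angle $\theta(e)$ (this fixes their relative position up to a one-parameter pencil). Thus A1, A2 and A3 are built in by construction, the $\CO$-ambiguity is removed by normalizing one cell, and $\Omega$ is the open subset of parameter tuples for which the pencils can be consistently realized.

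Second, construct a functional $\lob:\Omega\to{\bbb R}$, built as a sum over the $\pi_1(X)$-orbits of cells $f\in \wtF$ of a Schl\"afli/Lobachevsky-type volume contribution naturally attached to the disks surrounding~$f$, so that
$$\frac{\partial\lob}{\partial r_v}\;=\;2\pi\;-\sum_{f\ni v}\alpha_f(v),$$
where $\alpha_f(v)$ is the angle at $v$ subtended by the chain of disks around the cell~$f$. A critical point therefore satisfies $\sum_f \alpha_f(v)=2\pi$ at every $v$, which is exactly the statement that the bounding circles of the disks around each cell close up at a single point $\dual f$, i.e.\ A4 holds; A5 then follows because there is no remaining room in $\reg\cK$. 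Uniqueness of the configuration up to $\CO$ reduces to strict concavity of $\lob$, which I would verify by computing the Hessian edge by edge and exhibiting each edge contribution as negative semidefinite with a combined kernel ruled out by the connectedness of~$T$.

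Third, establish existence by showing that $\lob$ attains its supremum in the interior of~$\Omega$. The boundary $\partial\Omega$ consists of degenerations: some disks shrinking to points, some expanding to cover their neighbours, or a loop of disks ``closing up prematurely'' around a non-cell region. This is precisely where the polyhedral hypothesis is used: condition B1 prevents any reduced contractible loop ${\cal F}$ of edges from accumulating enough angle deficit to force a non-cell degeneration, while B2 pins the equality case to the genuine cells of $\wt T$. The task is to translate each combinatorial boundary stratum of $\Omega$ into an asymptotic estimate on $\lob$ or $\nabla\lob$ showing that $\lob$ either tends to $-\infty$ or has gradient pointing strictly inward there, so that a maximizer exists in the interior.

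The main obstacle is the boundary analysis for existence. Strict concavity and the identification of A4 with vanishing gradient are clean, conceptual steps; the work is in enumerating the possible degenerations (by which subcomplex collapses, or which loop of edges becomes ``cell-like'') and matching each with the corresponding instance of B1 or B2 to rule it out. Once a critical point $r^*\in\Omega$ is produced, A1--A5 are immediate from the construction, and the theorem follows.
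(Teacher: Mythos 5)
There is a genuine gap at the heart of your construction, and it concerns condition A4. For a cell $f$ with vertices $v_1,\dots,v_n$, a choice of radii together with the prescribed pairwise intersection angles $\theta(e)$ along the boundary edges of $f$ does \emph{not} force the bounding circles to pass through a common point: already for a triangle, three circles with given radii and given pairwise overlap angles generically have no common point, and for $n\geq 4$ the situation is worse because the radii do not even determine the relative positions of the disks (your ``one-parameter pencil'' per edge is never resolved, so the angles $\alpha_f(v)$ are not well-defined functions of the $r_v$ and the functional is not defined on your $\Omega$). Moreover, the critical-point identity $\sum_{f\ni v}\alpha_f(v)=2\pi$ is a condition \emph{at vertices} (no cone point at the disk centers); it is logically independent of A4, which is a condition \emph{at cells}, so criticality cannot deliver A4 as you claim. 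The paper resolves exactly this by changing variables: it parametrizes by the angles $\psi(s)$, $s\in\Se$, of triangles whose third vertex is the putative common point $\dual f$ and whose third angle is $\pi-\theta(\proj s)$ --- so A4 is wired into the combinatorial setup from the start, the vertex condition $\sum_s\br{v,s}\psi(s)=\pi$ is a linear constraint defining the convex domain $\WC$, and what criticality of $\LaC$ enforces is the remaining metric consistency, namely that the leg lengths of adjacent triangles agree around each vertex (read off from the half-side formulas and the law of sines).

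Two further points would sink the plan even if the parametrization were repaired. First, concavity: for $\chi(X)>0$ the critical points of the paper's functional are saddle points, not maxima, and the paper has to pass to the $f$-stereographic angular datum and the restricted functional $\LaCf$ to recover concavity; your Hessian computation ``edge by edge'' has no chance of giving global concavity on the sphere. Second, you place the entire weight of the polyhedral hypothesis on a boundary analysis of $\Omega$ (``translate each combinatorial boundary stratum into an asymptotic estimate''), which is precisely the hard Andreev--Thurston-style degeneration argument and is left unexecuted. The paper never needs it: the derivative of the Lobachevsky-based summands blows up at $\partial\clWC$, so the maximum of the concave functional is automatically interior, and conditions B1/B2 are used in an entirely different place --- to prove that the convex set $\WC$ is nonempty at all, via the Compatible Flow Theorem applied to a network built from $\V$, $\E$ and a virtual point. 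As it stands, your proposal identifies the right general philosophy but neither defines its configuration space correctly nor locates where the polyhedral condition actually does its work.
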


\noindent{\bf Remark.}
If every {\cell} of $T$ is homeomorphic to a  triangle or a quadrangle, and  
$\theta(e)\leq\pi/2,\ \forall e\in \E$ the above theorem is a special case of the 
Theorem of Andreev and Thurston ([A1], [A2], [Tu]). If $X$ is homeomorphic to $\NS$ 
compare with the results of Igor Rivin ([R1], [R2]).
\smallskip\newline\indent
Given a cell decomposition $T$ of a compact surface and a polyhedral weight function 
$\theta$ we will construct a convex space $\WC$ and a 
functional $\LaC$ on $\WC$ such that the $(T,\theta)$-configurations 
can be identified with the critical points of $\LaC$. 
The study of this functional leads to the existence and uniqueness 
of   $(T,\theta)$-configurations. 
In the next section we will show that any disk configuration $\cK$ can be 
interpreted as a $\pi_1(X)$-polyhedron $\CH\cK$. If $\psi$ is a 
critical point of $\LaC$ corresponding to the disk configuration $\cK$, then it 
will turn out that  
$\LaC(\psi)$ is the volume  
of a fundamental domain of $\CH\cK$.
\medskip

\abschnitt{The \protect\boldmath Hyperbolic Hull of a Disk Configuration.}
\label{Hull}
Let $T$ be a \mapon a compact surface $X$ and
 $\cK$ a $(T,\theta)$-configuration. 
We define the \new{hyperbolic hull} $\CH {\cK}$ of $\cK$ as 
the smallest subset of $\MR$ such that 
$\CH\cK\cup\{\dual f\mid f\in\wtF\}$ is a closed convex subset of $\MR\cup\reg\cK$
 (with the topology induced by $\MS$). 
Hence, $\CH\cK
$ is a $\pi_1(X)$-polyhedron. 
The elements of the set $\{\dual f\mid f\in\wtF\}$ are 
called the vertices of $\CH\cK$. The set of all vertices of $\CH\cK$ is a discrete 
subset in $\reg\cK$. 
If this subset is finite, 
i.e. $\reg\cK=\CS$, then $\CH\cK$ is called a finite ideal polyhedron. 

For every $k\in \ccK$ the intersection of $\H k$ with $\CH\cK$ 
is a closed convex subset of $\MR$. 
We call these sets the \new{facets} of $\CH\cK$. 
If $\reg\cK=\CS$ or $\reg\cK=\CS\setminus\{x\}$, $x\in\CS$, then  
the boundary of $\CH\cK$ in 
$\MR$ is just the union of all facets of $\CH\cK$. If 
$\reg\cK$ is an open disk and $m\in \MD$ with 
 $\reg m=\reg\cK$, then the boundary of $\CH\cK$ in 
$\MR$ is the union of all facets and 
 the hyperplane $\H m$. 

An \new{edge } of $\CH\cK$ finally 
is a non-empty intersection of two different facets of $\CH\cK$.  
The edges of $\CH\cK$ can also be characterized in the following way: 
Let $v$, $w$ be two vertices of $\wt T$ joined by an edge $e$. We define 
$\dual e$ to be 
the intersection of the
 hyperplanes $\H{\dual v}$ and $\H{\dual w}$. 
With this notation the edges of $\CH\cK$ are just the geodesics $\dual e$, $e\in\wtE$. 

We get a duality between the vertices (respectively, edges, facets) of $\CH\cK$ 
and the {\cells} (respectively, edges, vertices) of $\wt T$. In fact, for every vertex (respectively, edge) 
${\frak e}$ of 
$\CH\cK$ there is one and only one {\cell} (respectively, edge)  $e$ of $\wt T$ with 
${\frak e}=\dual e$ 
and for every facet ${\frak f}$ of $\CH\cK$ there is one and only one vertex $v$ of $\wt T$ 
with ${\frak f}\subset \H{\dual v}$. 

The discrete group $\pi _1(X)$ acting on $\CS$ induces a discrete 
group of isometries of $(\MR,\dist)$ and therefore acts properly discontinuously 
on $\MR$. Hence, $\CH {\cK}/\pi _1(X)$ is a hyperbolic orbifold. 
If in addition $\reg\cK\neq\CS$, then the group $\pi _1(X)$ acts freely on $\MR$ and  
$\CH { \cK}/\pi _1(X)$ is a hyperbolic manifold. 
In fact, let $m\in\MS$ such that $\reg\cK=\reg m$ and assume that $p\in\MR$ is fixed under an element $g\in\pi_1(X)$. 
If $m\in\MD$ (respectively, $m\in\partial\MR$) consider the 
geodesic line passing through $p$ and perpendicular to $\H m$ 
(respectively, passing through $p$ and converging to $m$). This geodesic 
is invariant under $g$ and has a limit point in $\reg\cK$. 
Hence, this limit point is fixed by $g$. Since  $\pi _1(X)$ acts freely on $\reg\cK$, the 
element $g$ has to be the identity. 
\Paragraph{A Characterization of Disk Configurations} \label{coh}
Let $T$ be a cell decomposition of a compact surface $X$. 
The aim of this chapter is to characterize $(T,\theta)$-configurations 
as critical points of some functionals. 
By $\Se$ we denote the set of \new{oriented edges}, i.e. 
$\Se:=\{ ({e,v})\in \E\times \V \mid \br{v,e}=1 \}$. 
We say that the edge $e$ and the vertex $v$ are incident to the 
oriented edge $\sect{e,v}$, and vice versa. 
For $s\in \Se$, $v\in \V$
 we define the bracket:
\begin{eqnarray*} \br{v,s}&:=&\left\{\begin{array}{lll}
             1&\mbox{if $v$ is incident to $s$}\\
            0&\mbox{else}.
\end{array}\right.  
\end{eqnarray*}
Furthermore, let $\proj\cdot:\Se\longrightarrow \E$
 be the canonical projection and $\rev:\Se\longrightarrow \Se$ 
the orientation reversing function, i.e. 
for every $s\in\Se$ the edge $\proj s$ is incident to the oriented edges $s$ and $-s$. 
If $\wt T$ denotes the lifted cell decomposition, then the set 
of oriented edges $\wtS$ of $\wt T$ and the incidence relations are 
defined likewise. 
We extend the covering projection $\ppi$ to the set of oriented edges 
by defining $\ppi(\sect{e,v}):=\sect{\ppi(e),\ppi(v)},$ for all $\sect{e,v}\in\wtS\subset\wtE\times\wtV$. 
%
\abschnitt{Angular Datum of a Disk Configuration.}\label{param}
Let $\cK$ be a   
$(T,\theta)$-confi\-gu\-ration and 
$v$, $w\in\wtV$ two vertices incident to an edge $e\in\wtE$. If $m$ is 
an element of $\MS$ such that the closure of $\reg{\dual v}\cup\reg{\dual w}$ 
is contained in $\reg m$, then the conformal disks 
$\reg{\dual v}$ and $\reg{\dual w}$ are metric disks in $(\reg m,m)$, 
i.e. for $k\in\{\dual v, \dual w\}$ 
there is a point $\Mi m k\in \reg k$ and a number $\varrho_m(k)\in {\bbb R}_+$ 
such that  
$$\reg k=\{z\in\reg\cK\mid |z-\Mi m k |_m<\varrho_m(k)\} .$$
We call $\Mi m k$ the \new{$m$-center} of the disk metric $k$. 
If 
$f$, $g\in\wtF$ are the {\cells} incident to $e$, then  
we define $Q_m(e)$ to be the geodesic quadrangle in $(\reg m,m)$ with 
vertices $\Mi m {\dual v}$, $\dual f$, $\Mi m {\dual w}$, $\dual g$ and   
 contained in the closure of ${\reg{\dual v}}\cup{\reg{\dual w}}$
 (Figure~{\ref{angular_data}}a). 
The geodesic line through 
$\Mi m {\dual v}$ and $\Mi m {\dual w}$ cut $Q_m(e)$ in two congruent 
triangles (Figure~{\ref{angular_data}}b).

\noindent
We denote the congruence
 class of these triangles by $\tr m( e)$. 
If $s$ is the oriented edge of $\wt T$ incident to
 the vertex $v$ and the edge 
$e$, then we define $\psi_m(s)$ to be half the
 angle of $Q_m(e)$ at the vertex $\Mi m {\dual v}$. 
Thus, $\psi_m(s)$ and $\psi_m(\rev s)$ are two angles of the 
triangle $\tr m(e )$. The third angle does not 
depend on the metric $m$. It is just $\pi-\wttheta(e)$.
\newline
\Figur{angular_data}
{\hspace*{0em}Figure~\ref{angular_data}{\rm a}\hspace{11em}Figure~ \ref{angular_data}{\rm b}}{0}{6} 

Our next goal is to describe the configuration ${ \cK}$ by a set of numbers. 
Let $ m \in \MS$ such that the regular domain of 
${ \cK}$ equals the regular
 domain of $m$ and 
the elements of $\pi_1(X)\subset \CO$ are isometries in $(\reg\cK,m)$. 
The set of quadrangles $\{Q_m(e)\mid e\in\wtE\}$ is a 
decomposition of $\wt X$ 
(Figure~\ref{angular_data}{}a). 
Since $\Dual$ is $\pi_1(X)$-equivariant and 
the elements of $\pi_1(X)$ are isometries in  $(\reg\cK,m)$, this 
decomposition is $\pi_1(X)$-invariant. 
Hence, if $\{\Delta_m\}$ denotes the set of congruence classes of geodesic 
triangles relative to the metric $m$, then the functions 
$$
\begin{array}{ll}
\tr m:\E\longrightarrow {\{\Delta_m\}},&\tr m(e):=\tr m(\ppi^{-1}(e))\\
\psi_m:\Se\longrightarrow {\bbb R},&\psi_m(s):=\psi_m(\ppi^{-1}(s))
\end{array}
$$
are well defined. We call $\psi_m:\Se\longrightarrow {\bbb R}$ the 
\new{angular $m$-datum} of ${ \cK}$.

Let $\psi:\Se\longrightarrow{\bbb R}$ be the angular $m$-datum of a 
$(T,\theta)$-configuration $\cK$, $v$ a vertex of $\wt T$ and 
$s_1,\ldots,s_n$ the oriented edges incident to $v$. 
 The $m$-center of $\dual v$ is a vertex of the decomposition 
$\{Q_m(e)\mid e\in\wtE\}$ 
and 
 $Q_m(\proj {s_1}),\ldots,Q_m(\proj {s_n})$ are just the {\cells} 
incident to this vertex.  
Thus, 
$$\sum_{i=1}^n2\psi(s_i)=2\pi.$$ 
For a function $\psi:\Se\longrightarrow {\bbb R}$ we get 
the following two necessary conditions to be the angular $m$-datum of a 
 $(T,\theta)$-configuration:
\medskip
\begin{versetzt}
\item[C1)]
For every $s\in \Se$ there is a non-degenerate geodesic triangle in the metric space $(\reg m,m)$ with 
 angles $\psi(s)$, $\psi(\rev s)$ and $\pi-\theta(\proj s)$,
\medskip
\item[C2)]
$\displaystyle 
\!\!\!
 \sum_{s\in \Se}\br{v,s}\psi(s)=\pi,\qquad\forall v\in \V.$
\end{versetzt}
\medskip
In the next section we will analyze Condition {C1} in more detail, 
i.e. we examine under what conditions there exists a 
non-degenerate geodesic 
triangle  in  $(\reg m,m)$ with  
prescribed angles $\alpha$, $\beta$, 
{\nolinebreak$\gamma$}. 
%
%
\abschnitt{Geodesic Triangles.}
For a metric $m\in \MD\cup\partial\MR$ any three numbers \\
$\alpha,\beta,\gamma\in{\bbb R}$ fulfilling the inequalities
\begin{eqnarray*}
\alpha,\beta,\gamma&\in&]0,\pi[, \\
\pi-\alpha-\beta-\gamma&\in&\hspace{-0.8em}  \left\{\hspace{-1ex}
\begin{array}{ll}
{]0,\pi[}&\hspace{-1ex}\mbox{if }m\in\MD,\\
{[0,0]}&\hspace{-1ex}\mbox{if }m\in\MR,
\end{array}\right.\\
\end{eqnarray*}
define a similarity class of geodesic triangles in 
the metric space $(\reg m,m)$ and vice versa. 
In this section we will show that an equivalent 
statement is true  for a metric 
$m\in\MR\cup\partial\MR$.

\begin{lemma}\label{existence}
Let $m\in\MS$ and let $c_m\in\{-1,0,+1\}$ be the curvature of $m$. 
\begin{versetzt}
\item[{\sc a)}]
Assume that $d$ is a non-degenerate geodesic triangle in $(\reg m, m)$ with angles 
$0<\alpha,\beta,\gamma<\pi$ 
and define  
$$\ex:=\frac12\left(\alpha+\beta+\gamma-\pi\right),\ \ \ 
\alp:=\alpha-\ex,\ \ \
\bet:=\beta-\ex,\ \ \
\gam:=\gamma-\ex.$$
Then
\begin{eqnarray}
\alp,\bet,\gam&\in\quad&]0,\pi[,\label{tri1}\\\label{tri2}
\ex=\pi-\alp-\bet-\gam&\in\quad&\hspace{-0.8em}  \left\{\hspace{-1ex}
\begin{array}{ll}
{]0,\pi[}&\hspace{-1ex}\mbox{ if }\ c_m=\ \  1 ,\\
{[0,0]}&\hspace{-1ex}\mbox{ if }\ c_m =\ \  0,\\
{]-\pi,0[}&\hspace{-1ex}\mbox{ if }\ c_m= -1 .\\
\end{array}\right.
\end{eqnarray}
\newline
\item[{\sc b)}] 
If $m\in\MR\cup\partial\MR$ and $\alp,\bet,\gam,\ex\in{\bbb R}$ 
are four numbers fulfilling (\ref{tri1}) and (\ref{tri2}), then  
there is a non-degenerate geodesic triangle 
in $(\reg m, m)$ with angles 
$\alpha=\alp+\ex$, $\beta=\bet+\ex$ 
and $\gamma=\gam+\ex$.
\end{versetzt}
\end{lemma}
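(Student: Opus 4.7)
My plan is to extract part (a) from the Gauss--Bonnet formula combined with the classical triangle inequalities in each constant-curvature geometry, and to prove part (b) by reversing this correspondence, using the polar-triangle construction in the spherical case.

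For part (a), applied to the geodesic triangle $d$ in $(\reg m, m)$, Gauss--Bonnet gives $c_m\cdot\area{d} = \alpha+\beta+\gamma-\pi = 2\ex$. Since $d$ is non-degenerate, $\area{d}>0$. In the Euclidean case $c_m=0$ this forces $\ex=0$. For $c_m=+1$ the area of a proper spherical geodesic triangle is strictly less than $2\pi$ (it lies in an open hemisphere), giving $\ex\in(0,\pi)$. For $c_m=-1$ we have $\area{d}<\pi$ (the supremum, attained only by ideal triangles), giving $\ex\in(-\pi/2,0)\subset(-\pi,0)$. This settles (\ref{tri2}).

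For (\ref{tri1}), the identity $\alp=(\pi+\alpha-\beta-\gamma)/2$ makes $\alp>0$ equivalent to $\beta+\gamma-\alpha<\pi$, while $\alp<\pi$ is equivalent to $\alpha<\pi+\beta+\gamma$, automatic from $\alpha<\pi$. In the Euclidean and hyperbolic cases the chain $\beta+\gamma\leq\pi-\alpha<\pi$ delivers the non-trivial inequality for free. In the spherical case, the three inequalities $\beta+\gamma-\alpha<\pi$, $\alpha+\gamma-\beta<\pi$, $\alpha+\beta-\gamma<\pi$ are the ``dual'' triangle inequalities: they say that the polar of $d$ has positive sides, equivalently that the vertices of $d$ are pairwise non-antipodal and each side of $d$ is a minor arc, both forced by non-degeneracy.

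For part (b), given $\alp,\bet,\gam,\ex$ satisfying (\ref{tri1}) and (\ref{tri2}), set $\alpha:=\alp+\ex$, $\beta:=\bet+\ex$, $\gamma:=\gam+\ex$. A short computation gives $\alpha=\pi-\bet-\gam$ (and symmetric formulas), so $\alpha\in(0,\pi)$ follows from $\bet,\gam>0$ together with $\bet+\gam<\pi-\alp<\pi$ (using $\ex\geq 0$). In the Euclidean case $\ex=0$ gives $\alpha+\beta+\gamma=\pi$, and the existence of a Euclidean triangle with prescribed angles is classical. In the spherical case, the identities $\alpha+\beta-\gamma=\pi-2\gam<\pi$ (and its two permutations) together with $\alpha+\beta+\gamma=\pi+2\ex>\pi$ guarantee that $\pi-\alpha,\pi-\beta,\pi-\gamma$ lie in $(0,\pi)$, satisfy the triangle inequalities, and sum to less than $2\pi$; hence they are valid side lengths of a spherical triangle, and the polar of that triangle has the prescribed angles $\alpha,\beta,\gamma$.

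The main obstacle is the spherical case on both sides: in (a) the dual triangle inequality must be justified from non-degeneracy of $d$, and in (b) one must invoke the polar-triangle construction and the bijection $\text{angles}\leftrightarrow\pi-\text{sides}$ under polar duality. Everything else reduces to Gauss--Bonnet plus elementary algebra on the four numbers $\alp,\bet,\gam,\ex$.
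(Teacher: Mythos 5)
Your argument is correct, but in the spherical case --- the only substantive one, since for $c_m\le 0$ both proofs reduce to the same elementary bookkeeping --- it runs along a genuinely different track from the paper's. For part (a) with $c_m=+1$ the paper encloses $d$ in an $\alpha$-biangle (lune) ${\rm bi}(\alpha)$ and reads off $2\alp$ as the area of the complementary triangle ${\rm bi}(\alpha)\setminus d$, so that $0<\alp<\alpha$ follows from additivity of area alone; you instead obtain $\alp>0$ as the triangle inequality for the polar triangle, whose sides are $\pi-\alpha$, $\pi-\beta$, $\pi-\gamma$. For part (b) the paper forms the intersection ${\rm bi}(\alpha)\cap{\rm bi}(\beta)$ of two lunes and lets their common vertex slide, so that the area --- hence the third angle --- sweeps out an interval containing $2\ex$; you instead invoke the existence theorem for spherical triangles with prescribed sides and dualize. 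Both routes are sound: the paper's is more self-contained, needing only the area of a lune and the angle-excess formula, while yours imports two standard pieces of spherical trigonometry (polar duality and the side-existence criterion) and in exchange dispenses with any continuity/intermediate-value argument. Two small caveats. First, your parenthetical justification that $\area{d}<2\pi$ because ``$d$ lies in an open hemisphere'' is both shakier than necessary (a triangle of area close to $2\pi$ has no business fitting in an open hemisphere, whose area is exactly $2\pi$) and superfluous, since $\area{d}=\alpha+\beta+\gamma-\pi<2\pi$ is immediate from $\alpha,\beta,\gamma<\pi$. Second, when you declare that ``each side of $d$ is a minor arc'' is forced by non-degeneracy, you are tacitly adopting the convention that the sides of a geodesic triangle are minimizing segments; this is the same convention the paper needs for $d\subset{\rm bi}(\alpha)$, so nothing is lost, but it should be stated rather than attributed to non-degeneracy alone.
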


\begin{beweis}
If $m\in\partial\MR$, then the assertions holds since $\ex=0$.
 
If $m\in\MD$, then $-2\pi<2\ex=-\area d<0$. Since $\alp =\alpha -\ex$ etc. 
we get $\alp,\bet, \gam>0$. From $\alp+\bet=\pi-\gamma$ etc. we then 
conclude that $\alp,\bet, \gam<\pi$. 

Assume that $m\in\MR$. 
 For $\zeta\in\  ]0,\pi[$ we define a $\zeta$-biangle to be the intersection 
of two closed 
conformal disks which are bounded by 
geodesic lines  in $(\reg m,m)$  and enclose an angle $\zeta$.  
The area of a $\zeta$-biangle is just $2\zeta$. 
\newline 
Let $d$ be a triangle as in  {\sc a}). Since the angles of $d$ 
are smaller than $\pi$, we have $0<\area d=2\ex<2\pi$. 
Let ${\rm bi}(\alpha)\supset d$ be an $\alpha$-biangle with vertices $A$, $A'$ as in 
 Figure~\ref{bian_deg}a. Then $\alpha$, $\pi-\beta$, $\pi-\gamma$ are the 
angles of the triangle $d_\alpha:={\rm bi}(\alpha)\setminus d$ 
and $\area{d_\alpha}=2\alp$. Thus, 
$0<2\alp<2\alpha=\area{{\rm bi}(\alpha)}<2\pi$. 
In the same way we conclude that 
$0<\bet,\gam<\pi$. 
\newline
Let $\alp,\bet,\gam,\ex$ as in {\sc b}). Since 
$0<\alp+\ex<\alp+\bet+\gam+\ex=\pi$ we have $\alpha\in \ ]0,\pi[$
 and in the same way we conclude that  $\beta,\gamma\in\ ]0,\pi[$. 
Let ${\rm bi}(\alpha)$ be an $\alpha$-biangle with vertices $A$, $A'$ and $B$ a point in 
the boundary of ${\rm bi}(\alpha)$ such that $B\neq A$, $B\neq A'$. Furthermore let 
${\rm bi}(\beta)$ be a $\beta$-biangle with vertices $B$, $B'$ as in Figure~\ref {bian_deg}a.
The intersection of ${\rm bi}(\alpha)$ and ${\rm bi}(\beta)$ is a non-degenerate
geodesic triangle $d$ with angles $\alpha,\beta,\gamma'$ and 
$\area d=\alpha+\beta+\gamma'-\pi$. 
Varying the point $B$ in our construction, the area of $d$ runs through the interval 
$]0,\min\{2\alpha,2\beta\}[$. Hence, for every $\gamma'\in{\bbb R}$ such that
\begin{equation}\label{local}
0<\alpha+\beta+\gamma'-\pi<\min\{2\alpha,2\beta\}
\end{equation}
there is a non-degenerate triangle with angles $\alpha,\beta,\gamma'$. 
Since $$0<\ex=\alpha-\alp=\beta-\bet<\min\{\alpha,\beta\}$$
inequality (\ref{local}) holds if $\gamma'=\gamma$ i.e. there is a 
non-degenerate triangle with angles $\alpha,\beta,\gamma$. 
\end{beweis}
%
%

At the end of this section we state some trigonometric relations 
which we will need later. 
We keep the above  notation and we denote by $a$ the length of the side
 opposite to $\alpha$. 
 With this notation the half-side formulas of non-Euclidean trigonometry 
can be written as
\begin{equation}\label{formula}
\tan^2\frac{a}{2\sqrt{c_m }}
=\frac{\sin\ex \sin\alp  }{\sin\gam \sin\bet  }.
\end{equation}
If $c_m=-1$ this yields 
\begin{equation}\label{formula2}
\tanh^2\frac{a}2
=\frac{\sin({-\ex}) \sin\alp  }{\sin\gam \sin\bet  }.
\end{equation} 
If $d_i$ is a sequence of non-Euclidean triangles converging 
to an Euclidean one, i.e. $\lim_{i\rightarrow\infty}\ex_i=0$, 
$\lim_{i\rightarrow\infty}\alpha_i=\alpha$ etc. these  half-side formulas
reduce to the Euclidean 
law of sines, namely  
$a/b=\sin\alpha/\sin\beta$ (where $b$ denotes the length of the edge 
opposite to $\beta$).

\vspace*{2ex}
\Figur{bian_deg}{\vspace*{-0.7em}
\hspace*{-4em}Figure~\ref{bian_deg}{\rm a}\hspace{11em}
Figure~\ref{bian_deg}{\rm b}
}{-1}{5}
%
%
\abschnitt{Coherent Angle Systems.}\label{WC}
In this section we define 
a set $\WC(T,\theta)$ containing all functions $\psi: \Se\longrightarrow {\bbb R}$ 
fulfilling Conditions C1 and C2. Our main tool will be 
Lemma~\ref{existence}{\refthpunkt}  
 Consider an arbitrary function $\psi: \Se\longrightarrow {\bbb R}$.
 In accordance with Lemma~\ref{existence} 
we define  
$$\begin{array}{ll}
\ex:\E\longrightarrow {\bbb R},
&e\longmapsto\frac12\left(
\psi(s)+\psi(\rev s)-\theta(e)\right),
\mbox{ where $s\in\Se$ with $\proj s=e$},\\
\wh\psi:\Se\longrightarrow {\bbb R},&
s\longmapsto\psi(s)-\ex(\proj s),\\
\wh \gamma:\E\longrightarrow {\bbb R},
&e\longmapsto\pi-\theta(e)-\ex(e).
\end{array}$$
Let $s\in\Se$ and $m\in\MS$. If there exists a non-degenerate triangle in the metric space $(\reg m,m)$
 with angles $\psi(s)$, $\psi(-s)$, $\pi-\theta(\proj s)$, then 
the numbers $\wh\psi(s)$, $\wh\psi(-s)$, $\ex(\proj s)$, $\wh\gamma(\proj s)$ 
lie  
in the intervals prescribed by Lemma~\ref{existence}{\refthpunkt} 
Hence, if $\psi$ is the angular datum of a $(T,\theta)$-configuration $\cK$
and $m\in\MS$ such that  $\reg\cK=\reg m$, then Condition C1 together with Lemma \ref{existence} imply that
\begin{equation}
\begin{array}{ll}
\wh\psi(s)\in \hspace{0.9em}]0,\pi[,&\forall s\in \Se, \label{WC1}\\
\wh\gamma(e)\in\hspace{1em} ]0,\pi[,&\forall e\in \E, \label{WC2}\\
\ex(e)\in \left\{\hspace{-1ex}
\begin{array}{ll}
{]0,c_m\pi[}&\hspace{-1ex}\mbox{if }c_m\neq 0,\\
{[0,0]}&\hspace{-1ex}\mbox{if }c_m = 0,
\end{array}\right.&\forall e\in \label{WC3}\E,
\end{array}\end{equation}
where $c_m$ denotes the curvature of the metric $m$.
 Note that $c_m$ is determined by the Euler characteristic $\chi(X)$ of $X$. In  fact, since $\theta$ is polyhedral, we get 
\begin{eqnarray}
\label{excesssumme}
\hskip-.5em\pi\cdot\chi(X)&=&\pi(\#\V-\#\E+\#\F)\nonumber\\&=&
\sum_{v\in \V}\sum_{s\in \Se}\br{v,s}\cdot\psi(s)-\pi\#\E+\frac12\sum_{f\in \F}
\ \sum_{e\in\E}\br{e,f}\cdot
(\pi-\theta(e))\nonumber\\&=&
\sum_{s\in \Se}\psi(s)-\pi\#\E+\sum_{e\in \E}(\pi-\theta(e))\nonumber\\&=&
2\cdot\sum_{e\in \E}\ex(e).
\end{eqnarray} 
Since  $2\ex(e)=c_m\cdot\area {\Delta_m(e)}$ if $c_m\neq 0$,    
the curvature of $m$ equals 
the sign of $\chi(X)$. Together with the equalities 
\begin{equation}
\begin{array}{ll}
 \wh\psi(s)+\wh\psi(\rev s)=\theta(\proj s),&\hspace{1em}\forall s\in \Se,\label{rel_1}\\
 \ex(e)+\wh\gamma(e)=\pi-\theta(e),&\hspace{1em}\forall e\in \E\label{rel_2}
\end{array}
\end{equation}
 this simplifies (\ref{WC1})  
 to 
\begin{equation}
\begin{array}{lll}
\wh\psi(s)\in &\hspace{1.25em}]0,\theta(\proj s)[&\forall s\in \Se, \label{eqn320}\\
\ex(e)\in &
\left\{
\begin{array}{ll}
 {]0,\pi-\theta(e)[}&\mbox{if}\ \chi(X) >0,\\
 {[0,0]}&\mbox{if}\ \chi(X)=0,\\
 {]-\theta(e),0[}&\mbox{if}\ \chi(X) <0,\\
\end{array}
\right.
&\forall e\in \E.\label{eqn321}
\end{array}\end{equation}
We define $\WT(T,\theta)$ as the set 
 of all functions 
$\psi: \Se\longrightarrow {\bbb R}$ satisfying 
(\ref{eqn320})  
  and  $\WC(T,\theta)$ as the subset of those elements of $\WT(T,\theta)$ 
 satisfying Condition {C2}. 
An element of the set $\WC(T,\theta)$ is called a \new{coherent angle system}. 
If there is no danger of confusion, we write 
$\WT$ and $\WC$ instead of $\WT(T,\theta)$ and $\WC(T,\theta)$. 
If $\chi(X)\geq 0$, then Lemma~\ref{existence} states that
 $\WC$ is the set of all functions $\psi: \Se\longrightarrow {\bbb R}$ 
satisfying Conditions C1 and C2. But note that for $\chi(X)<0$ 
a function $\psi\in\WC$ 
 may not  satisfy Condition C1.
Since $\WT$ as well as $\WC$ are subsets of ${\bbb R}^{\#\Se}$, defined by 
linear equations and inequalities, they are both convex. 
\abschnitt{Stereographic Angular Datum.}\label{relative}
Let $T$ be a cell decomposition of  $\NS$ and assume that ${ \cK}$ is a  
$(T,\theta)$-configuration. 
Every metric $m\in\MR$ yields a specific cell decomposition 
$\{Q_m(e)\mid e\in\E\}$ of $\CS$  and therefore a specific angular $m$-datum 
$\psi_m$ of $\cK$. 
Hence, if $m$ moves along a curve in $\MR$, then $\psi_m$ 
moves along a curve in $\WC$. 

Let $f$ be a {\cell} of $T$, $v_1,\ldots,v_n$  the vertices 
 incident to $f$ and assume that a metric $m\in\MR$ converges 
on a geodesic line towards the Euclidean metric $p:=\dual f$. 
 Since for any disk metric $k$ 
the function $\MR\cup\partial\MR\rightarrow\CS$, $m\mapsto\Mi m k$ 
is continuous, the cell decomposition 
$\{Q_m(e)\mid e\in\E\}$ tends to a cell decomposition 
$\{Q_p(e)\mid e\in\E\}$ of $\CS$. In this process the $m$-centers of the disks $\dual{v_1},\ldots,\dual{v_n}$ tend to 
$\dual f$. Hence, every quadrangle $Q_m(e)$  incident to the 
 $m$-center of such a disk (i.e. $e$ is incident to $f$) tends to 
the degenerate quadrangle $Q_p(e)$ in $(\CS\setminus \{p\},p)$. 
Figure~\ref{bian_deg}{\rm b} shows these degenerate quadrangles for a 
specific $(T,\theta)$-configuration. 
It is not difficult to verify that the angles of these degenerate 
quadrangles are determined by the weight function $\theta$. 
Hence, the union of all non-degenerate quadrangles is a polygon  in the Euclidean plane $(\reg p,p)$ whose exterior angles are determined by $\theta$. 
We define the \new{$f$-stereographic angular datum} $\psi\in\clWC$ by 
$s\mapsto\lim_{m\rightarrow p}\psi_m(s)$.   

If 
$s=\sect{e,v}\in\E\times\V$ is an oriented edge and $-s=\sect{e,w}$, then 
$\psi(s)$ and $\ex(e)$ have 
the following properties:
\begin{center}
\begin{tabular}{|c|c||c|c|c|}
\hline
$\br{v,f}$ & $\br{w,f}$ & $\psi(s)$ & $\ex(e)$ & Remark\\
\hline\hline
$1$ & $1$ & $\frac\pi 2$ & $\frac{\pi-\theta(e)}2$ & $Q_p(e)$ degenerate in $(\reg p,p)$\\
$1$ & $0$ & $0$ & $0$  &$Q_p(e)$  degenerate  in $(\reg p,p)$\\
$0$ & $1$ & $\theta(e)$ & $0$  &$Q_p(e)$  degenerate in $(\reg p,p)$\\
$0$ & $0$ & $\in ]0,\theta(e)[$ & $0$  & $Q_p(e)$ non-degenerate in $(\reg p,p)$\\
\hline
\end{tabular}
\end{center}

We define $\WCf$ as the set of all $\psi \in \clWC$ fulfilling the relations prescribed by the above tabular. 
Defining 
\begin{eqnarray*}
V^*&:=&\V\setminus\{v_1,\ldots,v_n\},\\
E^*&:=&\{e\in\E\mid\mbox{ no vertex of $e$ is incident to $f$ }\},\\
S^*&:=&\{s\in\Se\mid \proj s\in E^*\},
\end{eqnarray*}
every $\psi\in\WCf$ can canonically  be identify 
with its restriction to the set $S^*$, i.e.
\begin{equation}
\label{restriction}
\WCf\equiv
\left\{ \psi:S^*\longrightarrow ]0,\pi[ \ \left|
\begin{array}{l}
\sum\limits_{s\in S^*} \br{v,s} \psi(s)=\theta(v), \ \ \ \forall v\in V^*  \\
 \psi(s)+\psi(\rev s)=\theta(\proj s),\ \ \  \forall s\in S^*
\end{array}
 \right. \right \},
\end{equation}
where 
$$\theta(v):= \pi-\hspace*{-2ex}\sum_{e\in\E\setminus E^*}
\hspace*{-1ex}\br{v,e}\cdot\theta(e)
,\quad\forall v\in V^*.$$
Observe that the numbers $\theta(v)$, $v\in V^*$ are positive. This follows 
since $\theta$ is polyhedral.
\abschnitt{A Functional on the Set of Coherent Angle Systems.}
\label{functional}
In this section we construct an smooth functional  on $\WC$. 
Later on, we will see that configurations of disks occur as critical points of
these functionals. 
First, we introduce the following
\new{Lobachevsky Function} $ \lob:{\bbb R} \longrightarrow {\bbb R} $
$$ \lob(x):= - \int_{0}^{x}\log |2\sin \vartheta | d\vartheta.$$
It is quite easily checked that $\lob$ is well defined as the 
integral converges for all values of $x$. The Lobachevsky Function has the following
properties [Mi]:

\begin{proposition*}
\label{prop1}
\begin{versetzt}\item
\item[1.]
$\lob$ is a continuous and odd function.
\item[2.]
$\lob$ is smooth for all $x\in{\bbb R}$ except for $k\pi$, $k\in {\bbb Z}$.
\item[3.]
$\lob$ is $\pi-$periodic.
\item[4.]
For all $z\in {\bbb R}$ we have $\lob(z)=2\lob(\frac z2)+2\lob(\frac \pi 2+\frac z2)$.
\end{versetzt}
\end{proposition*}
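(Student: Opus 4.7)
The plan is to handle the four properties in order, since each builds on tools introduced for the earlier ones.

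First I would note that $\log|2\sin\vartheta|$ is smooth away from the points $k\pi$, $k\in{\bbb Z}$, and has only logarithmic singularities there, which are locally integrable (as $\int_0^\delta|\log t|\,dt<\infty$). Hence $\lob$ is well defined and continuous on all of ${\bbb R}$. Smoothness on ${\bbb R}\setminus\pi{\bbb Z}$ and the failure of smoothness at $k\pi$ follow from the Fundamental Theorem of Calculus, since $\lob'(x)=-\log|2\sin x|$ is smooth off $\pi{\bbb Z}$ but unbounded near each $k\pi$. Oddness follows from $|2\sin(-\vartheta)|=|2\sin\vartheta|$ via the substitution $u=-\vartheta$ in the defining integral. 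This settles parts 1 and 2.

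For $\pi$-periodicity in part 3, it suffices to show $I:=\int_0^\pi \log|2\sin\vartheta|\,d\vartheta=0$, since then $\pi$-periodicity of $|\sin|$ yields $\lob(x+\pi)-\lob(x)=-I=0$ for every $x$. To evaluate $I$, I would first use the symmetry $\vartheta\leftrightarrow\pi-\vartheta$ to reduce to $I=2J$, where $J:=\int_0^{\pi/2}\log(2\sin u)\,du$. Then apply the half-angle identity $2\sin u=(2\sin(u/2))(2\cos(u/2))=(2\sin(u/2))(2\sin(\pi/2-u/2))$, and substitute $v=u/2$ and $v=\pi/2-u/2$ in the two resulting summands to obtain $J=2J$, hence $J=I=0$.

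The duplication formula in part 4 is the only step requiring substantive manipulation. I would start from $|2\sin(2\vartheta)|=|2\sin\vartheta|\cdot|2\sin(\pi/2-\vartheta)|$, take logarithms, and integrate from $0$ to $z/2$. The change of variable $u=2\vartheta$ on the left produces $-\tfrac12\lob(z)$, the first integral on the right is already $-\lob(z/2)$, and in the second the substitution $u=\pi/2-\vartheta$ gives $\lob(\pi/2-z/2)-\lob(\pi/2)$. The already established oddness and $\pi$-periodicity yield $\lob(\pi/2)=0$ and $\lob(\pi/2-z/2)=-\lob(\pi/2+z/2)$, and assembling these pieces produces $\lob(z)=2\lob(z/2)+2\lob(\pi/2+z/2)$. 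The main obstacle is the bookkeeping in this last step: one must track the signs of $\sin$ and $\cos$ on the various subintervals and invoke oddness and periodicity at the right moments. Everything else flows directly from the definition.
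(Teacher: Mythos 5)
Your proof is correct and complete: all four substitutions check out, the identity $\int_0^{\pi}\log|2\sin\vartheta|\,d\vartheta=0$ is established by a legitimate self-referential computation (valid because the integral is finite), and the duplication formula is assembled correctly using $\lob(\pi/2)=0$ and oddness plus periodicity. The paper itself offers no proof of this proposition, deferring to the reference [Mi]; your argument is exactly the standard one found there, so there is nothing to compare against in the text.
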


\noindent
If $\phi\in (0,\pi)$, then we define a new function  
$\Ino{\phi}:{\bbb R} \longrightarrow {\bbb R}$ by 
\begin{equation}\label{I_definition}
\Ino{\phi}(x):=
\lob\left({x}\right) +\lob\left(\phi-{x}\right)
-2\lob\left(\frac \phi 2\right).
\end{equation}

\begin{proposition*}
\label{prop2}
For every $\phi\in (0,\pi)$ the continuous function $\Ino{\phi}$ is 
 smooth on $(0,\phi)$. Its
 restriction to the interval $[0,\phi]$ is  
strictly concave and non-positive with maximum value 
$\Ino{\phi}(\phi/2)=0$ and minimum value 
$\Ino{\phi}(0)=\Ino{\phi}(\phi)=2\lob(\frac\pi 2+\frac\phi 2)$.
\end{proposition*}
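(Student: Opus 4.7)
The plan is to check each claim using the properties of the Lobachevsky function listed in Proposition~1 together with direct differentiation. Continuity of $\Ino{\phi}$ on all of ${\bbb R}$ is immediate from continuity of $\lob$. Smoothness on $(0,\phi)$ follows because for $x\in(0,\phi)$ both arguments $x$ and $\phi-x$ lie in $(0,\pi)$, hence avoid the integer multiples of $\pi$ at which $\lob$ fails to be smooth.

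To deal with concavity and the location of the maximum I would compute the first two derivatives on $(0,\phi)$. From $\lob'(x)=-\log|2\sin x|$ we get
\[
\Ino{\phi}'(x)=-\log|2\sin x|+\log|2\sin(\phi-x)|=\log\frac{\sin(\phi-x)}{\sin x},
\]
so $\Ino{\phi}'(\phi/2)=0$. Differentiating once more,
\[
\Ino{\phi}''(x)=-\cot x-\cot(\phi-x)=-\frac{\sin\phi}{\sin x\sin(\phi-x)}.
\]
For $\phi\in(0,\pi)$ and $x\in(0,\phi)$ the numerator and denominator are both strictly positive, so $\Ino{\phi}''<0$ on $(0,\phi)$. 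This gives strict concavity of the restriction to $[0,\phi]$ (concavity on the open interval extends to the closed one by continuity), and combined with $\Ino{\phi}'(\phi/2)=0$ it shows that $\phi/2$ is the unique critical point, hence the unique maximum. The value there is $\Ino{\phi}(\phi/2)=2\lob(\phi/2)-2\lob(\phi/2)=0$, and non-positivity on $[0,\phi]$ then follows from strict concavity.

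It remains to identify the minimum. At the endpoints, using that $\lob$ is odd and continuous so $\lob(0)=0$,
\[
\Ino{\phi}(0)=\lob(\phi)-2\lob(\phi/2),\qquad \Ino{\phi}(\phi)=\lob(\phi)-2\lob(\phi/2),
\]
which are equal. Applying property~4 of Proposition~1 with $z=\phi$ gives $\lob(\phi)=2\lob(\phi/2)+2\lob(\pi/2+\phi/2)$, so both endpoint values collapse to $2\lob(\pi/2+\phi/2)$, as claimed. Strict concavity forces this to be the minimum on $[0,\phi]$.

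Almost all the work is routine bookkeeping. The one step that is not purely mechanical is recognising that $-\cot x-\cot(\phi-x)$ simplifies via the sine addition formula to $-\sin\phi/(\sin x\sin(\phi-x))$, which is what makes the sign of $\Ino{\phi}''$ transparent; after that, the endpoint identity is just property~4 of the Lobachevsky function applied once.
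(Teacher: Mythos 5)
Your proof is correct and follows essentially the same route as the paper: continuity and smoothness from the properties of $\lob$, the second-derivative computation $\Ino{\phi}''(x)=-\cot x-\cot(\phi-x)=-\sin\phi/(\sin x\sin(\phi-x))<0$ for strict concavity, the vanishing first derivative at $\phi/2$ for the maximum, and the endpoints for the minima. You additionally verify the endpoint value $2\lob(\frac\pi2+\frac\phi2)$ explicitly via property~4 of the Lobachevsky function, a detail the paper's proof leaves to the reader.
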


\begin{proof}
The continuity and smoothness follows immediately from the 
above proposition. 
For $x\in (0,\phi)$ we have 
$$\Ino{\phi}'' (x)=\lob''( x)+\lob''(\phi- x)=
-\cot x-\cot\left(\phi- x\right)=
\frac{-\sin\phi}{\sin x \sin\left(\phi- x\right)}<0.$$
The derivative of $\Ino{\phi}$ vanishes
 if  $x=\phi/2$. Since $\Ino{\phi}$ is concave 
on $[0,\phi]$ the point $x$ is a maximum and the minima lie on the boundary 
of the interval.
\end{proof}

\noindent
We define a function 
$\LaT:\clWT\longrightarrow{\bbb R}$ by 
\begin{equation}
\label{L_definition}
\LaT(\psi):=\frac 12\sum_{s\in \Se}\Ino{\theta (\proj s)}(\wh\psi(s)) -
\sum_{e\in \E}\Imi{\theta(e)}(\ex(e)).
\end{equation}
The above propositions together with (\ref{rel_1}) 
 imply that 
$\LaT$ is continuous on $\clWT$, smooth on $\WT$ and 
fulfills the relation 
\begin{equation}\label{normalform}
\LaT(\psi)=\sum_{s\in \Se}\lob({\wh\psi(s)})-
\sum_{e\in \E}\lob\left({\wh\gamma(e)}\right)-
\sum_{e\in \E}\lob\left({\ex(e)}\right)
-\sum_{e\in \E}\lob\left(\theta(e)\right).
\end{equation}
\newcommand{\vvarkappa}{\psi}
We denote the restriction of $\LaT$ to the set $\clWC$ by $\LaC$. 
If $X$ is homeomorphic to $\NS$ and $f$ is a {\cell} of $T$, then we denote the restriction 
of $\LaC$ to the set $\clWCf$ by $\LaCf$. 
The main result of this chapter is the following:

\begin{theorem}\label{critical_point}
Let $T$ be a cell decomposition of a compact surface $X$ and $\theta$ a 
polyhedral weight function. 
\begin{versetzt}
\item[1)]
Let $p\in \MS$ such that the curvature of $p$ 
equals the sign of the Euler characteristic of $X$. A point 
  $\psi\in\WC$ is a critical point of the functional 
$\LaC$ if and 
only if there exists a $(T,\theta)$-configuration with 
 angular $p$-datum $\psi$. 
This $(T,\theta)$-configuration is unique up to $\st {p}$.
\item[2)]
Assume that $X$ is homeomorphic to $\NS$ and let $f$ be a {\cell} of $T$. 
A point $\psi\in\WCf$ is a critical point of the functional $\LaCf$ if and only if there is 
a $(T,\theta)$-configuration with  $f$-stereographic angular datum 
$\psi$. This $(T,\theta)$-configuration is 
unique up to $\CO$. 
\end{versetzt}
\end{theorem}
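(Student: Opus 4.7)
The plan is to differentiate $\LaC$ using Lagrange multipliers for the C2 constraints and to interpret the resulting equations, via the half-side formulas of Lemma \ref{existence}, as the condition that the disks of a prospective $(T,\theta)$-configuration have a well-defined radius at each vertex of $\wt T$.

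First I would compute $d\LaT$ on $\WT$. Using $\lob'(x)=-\log|2\sin x|$ together with $\partial_{\psi(s)}\wh\psi(\pm s)=\pm\frac12$, $\partial_{\psi(s)}\ex(\proj s)=\frac12$, and $\partial_{\psi(s)}\wh\gamma(\proj s)=-\frac12$, the normal form (\ref{normalform}) gives
\[ \frac{\partial \LaT}{\partial \psi(s)} \;=\; \frac12\,\log\frac{\sin\wh\psi(-s)\,\sin\ex(\proj s)}{\sin\wh\psi(s)\,\sin\wh\gamma(\proj s)}. \]
Comparing with the half-side formula (\ref{formula}), this equals $\log\tan(b(s)/2\sqrt{c_p})$, where $b(s)$ is the length of the side of the triangle $\Delta_p(\proj s)$ opposite to the angle $\psi(-s)$; this is the very segment that, in any realising configuration, would be the radius of the disk $\dual v$ at the vertex $v$ incident to $s$.

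The linear equations cutting $\WC$ out of $\WT$ are precisely the constraints C2 indexed by $v \in \V$, so by Lagrange multipliers $\psi$ is critical for $\LaC$ iff there exist reals $(\lambda_v)_{v\in\V}$ with $\partial\LaT/\partial\psi(s)=\lambda_v$ whenever $v$ is incident to $s$. Through the identification above, this is the statement that $b(s)$ depends only on the vertex $v$ incident to $s$, defining a common radius $\varrho_v>0$. The forward implication is now immediate: given a configuration $\cK$ with $p$-datum $\psi$, C2 holds because the quadrangles $Q_p(e)$ tile the full angle $2\pi$ around each $p$-centre, and the Lagrange condition holds because every disk $\dual v$ of $\cK$ has a unique radius in $(\reg p,p)$.

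For the converse I would reconstruct a configuration from a critical $\psi$ by developing. Lemma \ref{existence} produces, for every edge $e$, a non-degenerate triangle $\Delta_p(e)$ with angles $\psi(s),\psi(-s),\pi-\theta(e)$; that $c_p$ has the correct sign is exactly what (\ref{excesssumme}) forces, since $\psi\in\WC$. Doubling $\Delta_p(e)$ across the side opposite to $\pi-\theta(e)$ gives a quadrangle $Q_p(e)$, and criticality ensures that edges meeting at a common vertex $v$ in any two adjacent quadrangles share the same length $\varrho_v$. Pasting these quadrangles along the combinatorics of $\wt T$ yields a local isometric developing map $\Phi:\wt X\longrightarrow(\reg p,p)$, provided the holonomy is trivial around every vertex (this is C2, giving total angle $2\pi$) and around every dual vertex $\dual f$ (this is condition B2, $\sum_{e\subset f}(\pi-\theta(e))=2\pi$, from polyhedrality of $\theta$). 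The main obstacle will be upgrading $\Phi$ from a local isometry to a homeomorphism onto a simply connected homogeneous domain: I would argue that $\Phi$ is equivariant for some representation $\pi_1(X)\to\st p$, that $\Phi$ restricted to the interior of each quadrangle is injective, and that the quotient of $\Phi(\wt X)$ by $\pi_1(X)$ inherits the cell structure of $X$, hence is homeomorphic to $X$, forcing $\Phi$ itself to be a homeomorphism. Setting $\dual v$ to be the open disk of radius $\varrho_v$ around $\Phi(v)$ and $\dual f:=\Phi(f)$, the conditions A1--A5 follow from the construction, and uniqueness up to $\st p$ is clear once $\psi$ and an initial quadrangle are chosen. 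Part 2) is proved along the same lines, taking $p=\dual f\in\partial\MR$ and reading off the degenerate quadrangles around $f$ from the table defining $\WCf$; the extra freedom to prescribe $\dual f$ anywhere in $\CS$ promotes the uniqueness from $\st p$ to the full group $\CO$.
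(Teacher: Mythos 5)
Your Lagrange-multiplier formulation is essentially the dual of the paper's argument: the paper works with the spanning tangent vectors $U_{e_1,e_2}$ and the criterion $(D\LaC)_\psi U_{e_1,e_2}=0$, which is equivalent to your condition that $\partial\LaT/\partial\psi(s)$ depend only on the vertex incident to $s$, and your identification of this derivative with $\log\tan$ of a half-side via (\ref{formula}) is exactly how the paper extracts the common radius $\varrho_p(v)$. The genuine gap is in the converse direction when $\chi(X)<0$. You assert that Lemma~\ref{existence} ``produces, for every edge $e$, a non-degenerate triangle $\Delta_p(e)$ with angles $\psi(s),\psi(\rev s),\pi-\theta(e)$''. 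Part b) of that lemma is stated only for $m\in\MR\cup\partial\MR$; for $p\in\MD$ the conditions defining $\WC$ (namely $\wh\psi(s)\in\ ]0,\theta(\proj s)[$ and $\ex(e)\in\ ]-\theta(e),0[$) do \emph{not} force the actual angles $\psi(s)=\wh\psi(s)+\ex(\proj s)$ to be positive, and the paper warns explicitly that for $\chi(X)<0$ an element of $\WC$ may fail Condition C1. Deriving $\psi(s)>0$ and the bound $2|\ex(\proj s)|<\pi-\gamma$ \emph{from criticality} (via the inequality $\sin\left|{\ex}\right|\sin{\wh\beta}\geq\sin{\wh\alpha}\sin{\wh\gamma}$ that would hold if some $\alpha\leq 0$, which then propagates by (\ref{glue}) to every oriented edge at the vertex and contradicts C2) is the main technical content of the paper's Step A, and it is absent from your plan; without it your quantity $\log\tan(b(s)/2\sqrt{c_p})$ is not even defined, since the sine ratio can exceed $1$ and no hyperbolic triangle with those angles need exist.

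A second, smaller omission: for $\chi(X)=0$ your derivative formula degenerates ($\ex\equiv 0$, so the term $\log|2\sin\ex(\proj s)|$ is $-\infty$), and Euclidean triangles with prescribed angles are determined only up to similarity, so ``the length $b(s)$'' has no intrinsic meaning. The multipliers $\lambda_v$ for the reduced functional $\sum_{s}\lob(\psi(s))$ do still encode radii up to a global scale via the law of sines, so your idea survives, but one must fix congruence classes consistently, which the paper checks around loops of edges with the tangent vectors $U_{\cal F}$ in Step B; note also that for $\chi(X)=0$ the set $\WT$ itself carries the extra linear constraints $\psi(s)+\psi(\rev s)=\theta(\proj s)$, so your multiplier ansatz needs an additional edge-indexed multiplier. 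The reconstruction by developing and the treatment of part 2) in your outline match the paper's Steps A and C in spirit.
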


\noindent
The proof of this theorem will be divided into three steps:
\smallskip\newline
{{\sc Step A: Proof of 
1) if \protect\boldmath$\chi(X)\neq\mbox {\bf 0}$. }}
\label{nichtnull}
Let $T_\vvarkappa\WC$ denote the tangent space of $\WC$ at the point $\vvarkappa\in\WC$,
 i.e. 
$$T_\vvarkappa\WC=\left\{U:\Se\longrightarrow {\bbb R}
\left|\sum_{s\in \Se}\right.\br{v,s}U(s)=0,\quad
\forall v\in \V\right\}.$$
We look for simple tangent vectors. Let $e_1$, $e_2$ be two 
 different edges of $T$ incident to a  vertex $v$. 
We define $U_{e_1,e_2}\in T_\vvarkappa\WC$ by (Figure~\ref{tang_vekt_kr}a):
$$U_{e_1,e_2}(s)= 
\left\{\begin{array}{rl}
1&\mbox{if}\quad s=\sect{e_1,v},\\
-1&\mbox{if}\quad s=\sect{e_2,v},\\
0&\mbox{else.}
\end{array}\right.$$ 
The set of all these tangent vectors span $T_\vvarkappa\WC$.

 Assume that $\psi\in\WC$ is a 
 critical point of $\LaC$ 
and let $v$, $e_1$, $e_2$ be defined as above. For $i=1,2$ we will use the following notation: 
$\alpha_i:=\psi(\sect{e_i,v})$; $\beta_i:=\psi(\rev{\sect{e_i,v}})$; 
$\gamma_i:=\pi-\theta(e_i)$; $\wh\alpha_i:=\wh\psi(\sect{e_i,v})$ 
etc. (Figure~\ref{tang_vekt_kr}b).
\subsubsection*{}
\Figur{tang_vekt_kr}
{\hspace*{-1em}
Figure~\ref{tang_vekt_kr}{\rm{a}}\hspace*{10em}
Figure~\ref{tang_vekt_kr}{\rm{b}}}{0}{2}

Let $(D\LaC)_{\psi}$ denote the tangent map of $\LaC$ at the point $\psi$. 
 Using equation (\ref{normalform}) we obtain:
\begin{eqnarray}
0&=&2\cdot(D\LaC)_{\psi}U_{e_1,e_2}\nonumber\\&=&\label{backward}
-\log\Big|2\sin {\wh\alpha_1}\Big|
+\log\Big|2\sin {\wh\beta_1}\Big|
+\log\Big|2\sin {\ex_1}\Big|
-\log\Big|2\sin {\wh\gamma_1}\Big|\phantom{.}
\\ \nonumber
& &+\log\Big|2\sin {\wh\alpha_2}\Big|
-\log\Big|2\sin {\wh\beta_2}\Big|
-\log\Big|2\sin {\ex_2}\Big|
+\log\Big|2\sin {\wh\gamma_2}\Big|.
\end{eqnarray}
Hence,
\begin{equation}\label{glue}
\frac{\sin\left|{\ex_1}  \right|\sin{\wh\beta_1} }
{\sin{\wh\alpha_1}  \sin{\wh\gamma_1} }=
\frac{\sin\left|{\ex_2}  \right|\sin{\wh\beta_2} }
{\sin{\wh\alpha_2}  \sin{\wh\gamma_2} }.
\end{equation}
Assume for the moment that there exist non-degenerate geodesic
 triangles $\Delta_p(e_1)$, $\Delta_p(e_2)$ in $(\reg p,p)$ 
with angles $\alpha_1,\beta_1,\gamma_1$ respectively  $\alpha_2,\beta_2,\gamma_2$. Comparing equation (\ref{glue}) 
with the formulas 
(\ref{formula}) and (\ref{formula2}) 
shows that the legs opposite to the angles $\beta_1, \beta_2$ have the same length. 
But recall that 
we have to prove the existence of these triangles if $p\in\MD$
(see Lemma~\ref{existence}).

Assume therefore that the curvature of $p$ is $-1$. For $i\in\{1,2\}$ 
we will show that $\pi>\alpha_i>0$ and 
$2\ex_i >-\pi$. 
If these inequalities are fulfilled, then there exists a triangle 
in the hyperbolic plane $(\reg p,p)$ with angles 
 $\alpha_i,\beta_i,\gamma_i$.  
Since $\wh\alpha_i\in \ ]0,\pi[$ and $\ex_i\in\ ]-\pi,0[$, we have 
$$\pi>\alpha_i=\wh\alpha_i+\ex_i> -\pi.$$ 
Now suppose that $\alpha_i\leq 0$. Then  
 the following (pairwise equivalent) inequalities hold:
\begin{eqnarray}
0=\cos\beta_i-\cos\beta_i&\geq&2\sin\alpha_i\sin\gamma_i=
\cos(\alpha_i-\gamma_i)-\cos(\alpha_i+\gamma_i),\nonumber\\
\cos\beta_i+\cos(\alpha_i+\gamma_i)&\geq&
\cos\beta_i+\cos(\alpha_i-\gamma_i),\nonumber\\
\sin\left|{\ex_i}  \right|\sin{\wh\beta_i} &\geq&
\sin{\wh\alpha_i}  \sin{\wh\gamma_i} \label{dazu}.
\end{eqnarray}
Since $(D\LaC)_{\psi}U_{e_1,e_2}=0$ for any pair of edges $e_1,e_2$ 
incident to  $v$, we conclude from 
(\ref{dazu}) and (\ref{glue}) that 
$\psi(s)\leq 0$ for all $s\in \Se$ with $\br{v,s}=1$. 
Hence, the sum of all $\psi(s)$ with $\br{v,s}=1$ has to be non-positive, 
which contradicts Condition C2. 
Arguing in the same way we see that  $\beta_i>0$. 
If $\alpha_i>0$ and $\beta_i>0$ the above calculation shows that 
$$
\frac{\sin\left|{\ex_i}  \right|\sin{\wh\beta_i} }
{\sin{\wh\alpha_i}  \sin{\wh\gamma_i} } < 1
\quad\quad\mbox{and}\quad\quad
\frac{\sin\left|{\ex_i} \right|\sin{\wh\alpha_i} }
{\sin{\wh\beta_i}  \sin{\wh\gamma_i} } < 1.
$$
Multiplying these inequalities we get
$$\sin{\proj{\ex_i}}<\sin{\wh\gamma_i}=\sin
\left(\gamma_i+{\proj{\ex_i}} \right).$$ 
Hence, $\gamma_i+\proj{\ex_i}<\pi-\proj{\ex_i}$ 
and $2\proj{\ex_i}<\pi-\gamma_i$.

Summarizing, we showed that for every oriented edge $s$ of $T$ 
there exists a non-degenerate triangle in the metric space $(\reg p,p)$
 with angles $\psi(s)$, $\pi-\theta(\proj s)$ 
$\psi(\rev s)$. 
We denote its congruence class by $\Delta_p(\proj s)$
and 
the length of its leg  
opposite to $\psi(s)$ by $\len(s)$. 
If $s_1$, $s_2$ are two oriented edges of $T$ incident to a 
vertex $v$, then $\len(\rev{s_1})=\len(\rev{s_2})$. 
Gluing pairs of these triangles as in \ref{param} we get 
 a $\pi_1(X)$-invariant decomposition 
$\{Q_p(e)\mid e\in\wtE\}$ of $\reg p$ (Figure~\ref{angular_data}a). 
Let $v$ be a vertex of $\wt T$ and let $e_1,\ldots,e_n$ to be the edges 
incident to $v$. Then the quadrangles $Q_p(e_1),\ldots,Q_p(e_n)$ have a  
vertex $v_p$ in common and the legs  
incident to $v_p$ have the same length $\varrho_p(v)$. 
We define $\dual v$ to be the disk metric whose regular domain is the metric disk 
in $(\reg p, p)$ with center $v_p$ and 
and radius $\varrho_p(v)$. 
 The map $v\mapsto \dual v$
 is a $(T,\theta)$-configuration.

Conversely, if $\psi$ is the angular $p$-datum of a 
$(T,\theta)$-configuration, then $\psi$ is an element of $\WC$.
 If the edges $e,e'\in\E$ are incident to a  vertex $v$, 
then $\len(\rev{\sect{e,v}})=\len(\rev{\sect{e',v}})$. Reading (\ref{backward}) backwards,
 we conclude that 
$(D\LaC)_\psi U_{e,e'}=0$. 
 Since these vectors span the tangent space, the point $\psi$ is critical.

Evidently, the angular $p$-datum 
determines a disk configuration up to $\st p$. 
%
\smallskip\newline
{{\sc Step B: 
Proof of 1) if \protect\boldmath$\chi(X)=\mbox {\bf 0}$. }}
\label{null} Since 
$$\WC=\left\{ \psi:\Se\longrightarrow \ ]0,\pi[ \ \left|
\begin{array}{l}
\sum\limits_{s\in \Se} \br{v,s} \psi(s)=\pi, \ \ \ \forall v\in \V\\
 \psi(s)+\psi(\rev s)=\theta(\proj s),\ \ \  \forall s\in \Se
\end{array}
 \right. \right \},
$$
we have 
$$T_{\vvarkappa}\WC=\left\{ U:\Se\longrightarrow {\bbb R} \left|
\begin{array}{l}
\sum\limits_{s\in \Se} \br{v,s} U(s)=0, \ \ \ \forall v\in \V \\
 U(s)+U(\rev s)=0,\ \ \  \forall s\in \Se
\end{array}
 \right. \right \}.$$
If ${\cal F}=(e_1,\ldots,e_n)$ is a chain of edges (see \ref{PWF}), then we 
define $v_1,\ldots,v_n$ to 
be the vertices 
$\weg{\gamma}{{\cal F}}(1),\ldots,\weg{\gamma}{{\cal F}}(n)$ 
along the curve 
$\weg{\gamma}{{\cal F}}$ and $s_i:=\sect{e_i,v_i}\in\Se$. 
If ${{\cal F}}$ is a loop of edges, then  
we define a tangent vector 
$U_{\cal F}$ in the following way 
(Figure~\ref{tang_vekt_fl}a):
$$U_{\cal F}(s):= \left\{\begin{array}{rl}
1&\mbox{if there exists a $i\in\{1,\ldots,n\}$ with $s=s_i$,}\\
-1&\mbox{if there exists a $i\in\{1,\ldots,n\}$ with $s=\rev{s_i}$}\\
0&\mbox{else.}
\end{array}\right.$$

\Figur{tang_vekt_fl}{\hspace*{-4em}
Figure~\ref{tang_vekt_fl}{\rm a}\hspace*{9em}
Figure~\ref{tang_vekt_fl}{\rm b}}{-6}{4}

\noindent
The elements of the set 
$\{U_{\cal F}\mid {\cal F}\ \mbox{is a loop of edges }\}$
 span  $T_\psi\WC$.
 
Let $\psi$ be a critical point of $\LaC$. 
Using  (\ref{normalform})
 the function $\LaC$ reduces to 
\begin{equation}\label{mitterand}
\LaC(\psi)=
\sum_{s\in \Se}\lob\left({\psi(s)}\right).
\end{equation}
For every oriented edge $s$ there is a similarity class of geodesic
 triangles in $(\reg p,p)$ with angles $\psi(s)$, $\pi-\theta(\proj s)$, 
$\psi(\rev{s})$. 
The next step of the proof is to fix a 
 congruence class  $\Delta_p(\proj s)$.  As usual, we denote 
the length of the leg of $\Delta_p(\proj s)$ 
opposite to $\psi(s)$ by $\len(s)$. 
We start with an arbitrary edge $e_0\in \E$ and we choose $\Delta_{p}(e_0)$. 
Then for every chain of edges ${\cal F}=(e_1,\ldots,e_n)$ such that $e_1=e_0$ 
 we successively fix $\Delta_{p}(e_2),\ldots, \Delta_{p}(e_n)$ by 
demanding $\len({s_2})=\len(\rev{s_1}),\ldots,\len({s_n})=\len(\rev{s_{n-1}})$ 
(Figure~\ref{tang_vekt_fl}b). 
We claim that the congruence classes $\Delta_p(e_i)$ do not depend on 
${\cal F}$. 
Therefore assume that ${\cal F}=(e_1,\ldots,e_n)$ 
 is a loop of edges and choose 
$\Delta_{p}(e_1),\ldots, \Delta_{p}(e_n)$ as described above. We have to show that 
$\len(\rev{s_n})=\len(s_1)$. 
Since $\psi$ is critical, we conclude from (\ref{mitterand}) that 
\begin{eqnarray*}
0=(D\LaC)_\psi U_{\cal F}=
&+&\!\! \!\!\log|2\sin\psi(\rev{s_1})|-\log|2\sin\psi(s_1)| \\
&+&\!\!\!\!\dots +\log|2\sin\psi(\rev{s_n})|-\log|2\sin\psi(s_n)|.
\end{eqnarray*} 
Hence,
$$1=\frac{
\sin\psi(\rev{s_1}) \cdot\sin\psi(\rev{s_2})
\cdots\sin\psi(\rev{s_n})  }
{\sin\psi(s_1) \cdot\sin\psi(s_2)
\cdots\sin\psi(s_n)  }.$$
Applying the law 
 of sines we get 
$$1=\frac{\len(\rev{s_1}) \cdot\len(\rev{s_2})
\cdots\len(\rev{s_n})  }
{\len(s_1) \cdot\len(s_2)
\cdots\len(s_n)  }
=\frac{\len(\rev{s_n})}{\len(s_1)}.$$
Therefore, the function $e\mapsto\Delta_p(e)$ is well defined. 
Furthermore, if  $s$, $s'\in\Se$ are incident 
to a  vertex $v$, then $\len(\rev s)=\len(\rev{s'})$. 
The remainder of the proof is the same as in Step A. 
%
%
\smallskip\newline
{{\sc Step C: 
Sketch of the 
Proof of 2). }}
Using characterization (\ref{restriction}) of $\WCf$,  
every fiber of the tangent space can be identified with the set 
$$\left\{ U:S^*\longrightarrow {\bbb R} \left|
\begin{array}{l}
\sum\limits_{s\in S^*} \br{v,s} U(s)=0, \ \ \ \forall v\in V^* \\
 U(s)+U(\rev s)=0,\ \ \  \forall s\in S^*
\end{array}
 \right. \right \}.$$ 
Let $p\in\partial\MR$ and assume that $\psi$ is a critical point of 
$\WCf$. We will construct a $(T,\theta)$-configuration $\cK$ 
such that $\psi$ is the $f$-stereographic angular datum of $\cK$ 
 and $p=\dual f$. For every $s\in S^*$ 
 we fix a congruence class of triangles $\Delta_p(\proj s)$ 
with angles $\psi(s)$, $\pi-\theta(\proj s)$, $\psi(-s)$ in the same way as in Step B. 
Gluing pairs of these triangles  in the way prescribed 
by $(T,\theta)$ yields 
non-degenerate quadrangles  
 $Q_p(\proj s), s\in S^*$ in $(\reg p,p)$. 
Let ${\cal P}$ be the union of all these quadrangles. The exterior angles of ${\cal P}$ are determined by the weight function 
$\theta$.  
The quadrangles $Q_p(\proj s), s\in S^*$ imply the existence of a disk metric $\dual v$ for 
every $v\in V^*$. On the other hand, 
these disk metrics define the points $\dual{f'}$, $f'\in\F\setminus\{f\}$. 
It remains to define 
 the disk metrics $\dual {v_1},\ldots,\dual {v_n}$. 
Let $w\in\{v_1,\ldots,v_n\}$ and  
 $f,f_1,\ldots,f_k$ the cells incident to 
$w$. The regular domain of $\dual w$ has to be bounded by a circle in 
$\CS$ passing through $\dual f$, $\dual{f_1},\ldots,\dual{f_k}$,
 i.e. a geodesic line $g$ in $(\reg p,p)$ which passes through the points  $\dual{f_1},\ldots,\dual{f_k}$.  
An inspection of the  exterior angles of ${\cal P}$ shows the 
existence of such a geodesic $g$.

Fixing $p\in\partial \MR$ the $f$-stereographic angular datum determines 
a disk configuration with $p=\dual f$ up to $\st p$. 
If we do not fix the point $p$, 
the $f$-stereographic angular datum determines a $(T,\theta)$-configuration 
up to $\CO$.
\endproof
\def\const{{\rm const}}
\Paragraph{Existence and Uniqueness of Disk Configurations}
In this chapter we finally  prove Theorem~\ref{thm}, provided 
that there exist coherent angle systems. 
Their existence  will  be shown in Chapter~\ref{NL}{\refpunkt} 
Our main tool will be Theorem~\ref{critical_point}{\refthpunkt} 
It reduces the 
proof of Theorem~\ref{thm} to a hunt for critical points. 
Henceforth,  $\const$  will denote a number which is constant 
for fixed $(T,\theta)$.  
The proof will be divided into several steps:
%
%
\smallskip\newline{\sc Proof of Theorem~\ref{thm} if \protect\boldmath$\chi(X)<\mbox 0$:}
\label{chi<0}
Since $\ex(e)< 0$ for all edges $e$ of $T$, we get   
$$-\Imi{\theta(e)}(\ex(e))=
\Ino{\theta(e)}(|\ex(e)|)
+\const,$$
and 
$$\LaT(\psi)=\frac 12\sum_{s\in \Se}\Ino{\theta (\proj s)}(\wh\psi(s)) +
\sum_{e\in \E}\Ino{\theta(e)}(\proj{\ex(e)})+\const '.$$
Proposition \ref{prop2} yields that 
$\LaT:\clWT\longrightarrow {\bbb R}$ 
is strictly concave. The set $\clWC$ is a convex subset of 
$\clWT$. Therefore, $\LaC$ must be concave, too. 

Since $\WC$ is nonempty (see Chapter~\ref{NL}), the continuous 
and bounded function $\LaC$ has a global maximum 
 $\psi\in\clWC$. If $\psi$ is not a boundary point,  then 
$\psi$ is the only critical point of $\LaC$. 
Assume therefore that the global maximum $\psi$ is a boundary point of 
$\clWC$, i.e. there exists an $s\in\Se$ and an 
$x\in\{0,\theta(|s|)\}$ such that 
$\wh\psi(s)=x$ or $-\ex(|s|)=x$.
Since the function $\Ino{\theta(\proj s)}$ is singular at the point $x$, i.e.  
$$\lim_{x\downarrow 0}\frac\partial{\partial x}\Ino{\theta(\proj s)}(x)=\infty
\qquad \mbox{and}\qquad 
\lim_{x\uparrow \theta(\proj s)}\frac
\partial{\partial x}\Ino{\theta(\proj s)}(x)=-\infty,$$
 the function $\LaC$ decreases if a point tends to the boundary. 
Hence, 
 $\psi\in\WC$ and  
Theorem~\ref{critical_point} states that there exists a $(T,\theta)$-configuration
with angular datum $\psi$.

Assume that $\cK,\cK'$ are two $(T,\theta)$-configurations and let 
$p,p'$ be two metrics such that $\reg p=\reg\cK$, $\reg {p'}=\reg{\cK'}$. 
Since the curvature of $p,p'$ 
equals the Euler characteristic of $X$, the metrics $p$ and $p'$ are elements 
of $\MD$. The group $\CO$ acts transitive on $\MD$. Hence, there is a 
$\Phi\in\CO$ such that 
$\reg\cK=\reg{\Phi(\cK')}$. Theorem~\ref{critical_point} states that 
the angular $p$-data of $\cK$ and $\Phi\circ\cK'$ are both critical points 
of $\LaC$. Since there is only one critical point these angular data coincide,
i.e. 
 there is a $\Phi'\in\st p$ with 
$\cK=\Phi'\circ\Phi\auf{\cK'}$. 
\smallskip\newline
{\sc Proof of Theorem~\ref{thm} if \protect\boldmath$\chi(X)=\mbox 0$:} 
\label{chi=0}
Since $\ex(e)=0$ for all $e\in\E$, the function $\LaT$ reduces to
\begin{equation}\label{chirac}
\LaT (\psi)=\frac 12\sum_{s\in\Se}
\Ino{\theta(\proj s)}\left(\psi(s)\right)+\const.
\end{equation}
Let ${ \cal F}$ be the set of all functions 
$\psi:\Se\longrightarrow {\bbb R}$
 such that $\psi(s)\in \ ]0,\theta(|s|)[$, $\forall s\in \Se$. 
Using (\ref{chirac}) we extend $\LaT$ to the set $\overline{{ \cal F}}$. 
The function $\LaT:\overline{{ \cal F}}\longrightarrow {\bbb R}$ is again 
concave. Since $\LaC$ is just the restriction of $\LaT$ to the 
convex set $\clWC$, the function $\LaC$ has to be convex, too. 
Now, we conclude as in case $\chi(X)<0$.
\smallskip\newline
{\sc Proof of Theorem~\ref{thm} if \protect\boldmath$\chi(X)>\mbox 0$:}
In the above cases a $(T,\theta)$-configuration was unique up to similarity
 if we 
fixed its regular domain. We only had to prove the existence and 
uniqueness of a critical point in $\WC$. If the Euler characteristic 
is positive, i.e. the regular domain is $\CS$, then the proof 
is more delicate for two reasons. 
First, we have no tool to check whether two 
angular data describe the same $(T,\theta)$-configuration 
up to $\CO$ and second, the critical points 
of $\LaC$ are saddle points and no global extremal points. 
We can handle these difficulties by using stereographic angular data. 

Assume first that $\chi(X)=2$ and let $f$ be a {\cell} of $T$. 
We use the notation of~\ref{relative}{\refpunkt} 
The functional 
$\LaCf$ reduces to 
$$
\LaCf(\psi)=\frac 12 \sum_{s\in S^*}
\Ino{\theta(\proj s)}\left(\psi(s)\right)
+\const.$$
Since $\WCf$ is nonempty (see Chapter~\ref{NL}), the 
functional $\LaCf$ takes a global maximum 
$\psi\in\clWCf$. 
As in case $\chi(X)=0$ we conclude that $\LaCf$ is again concave and that
 $\psi$ is the only 
 critical point in $\WCf$. 
Hence, Theorem~\ref{critical_point} states that 
 there is one and only one $(T,\theta)$-configuration 
up to $\CO$.

If $\chi(X)=1$, then $X$ is covered by $\NS$. 
Since $\wttheta:\wtE\longrightarrow ]0,\pi[$ is polyhedral,  
 there is a 
$(\wt T,\wt \theta)$-configuration ${ \cK}$ which is unique  up to $\CO$. 
Let $g$ be the non-trivial element of $\pi_1(X)$. Then 
$\cK\circ g$ is a $(\wt T,\wttheta)$-configuration, too. Thus, there is 
a $\Phi\in\CO $ such that $\cK\circ g=\Phi\circ\cK$. 
Since $g^2={\rm id}$ the function $\Phi^2$ fixes every element 
of the set $\cK(\wtF)$, i.e.  $\Phi^2={\rm id}$.
\endproof

\bigskip
\Paragraph{Existence of Coherent Angle Systems}\label{NL}
Let $T$ be a \mapon a compact surface $X$ and 
$\theta:\E\longrightarrow\ ]0,\pi[$ a 
polyhedral weight function. In this chapter we will show 
that the set $\WC(T,\theta)$ is non-empty.
 We use a procedure proposed by Yves Colin de Verdi$\grave{\rm e}$re 
[CV] which needs the following theorem of graph theory.
%

Let $A$ be an antisymmetric relation on the finite set $P$. 
We call the elements of $P$ points and those of $A$ arrows. 
If $(p,q)$ is an arrow, then we call $p$ its initial point and $q$ its endpoint. 
For a set $Z$ of points we denote by $\ra\!\!\!Z\ $  (respectively, $ Z\!\!\!\ra $) 
the set of those arrows having only their endpoint (respectively, initial point) in $Z$. 
A flow $\varphi$ on $(P,A)$ is defined to be a function  
$\varphi: A\longrightarrow {\bbb R}\cup\{-\infty,\infty\}.$ 
We will use the following Compatible Flow Theorem 
which can be found in [BE]:

\begin{theorem*}
Let $A$ be an antisymmetric relation on the finite set $P$ and 
$b,B: A\longrightarrow {\bbb R}\cup\{-\infty,\infty\}$ flows on $(P,A)$ 
such that 
\begin{eqnarray*}
&b(a)\leq B(a),&\forall a\in A\nonumber\\
&\sum\limits_{a\in\ra Z}b(a)\leq\sum\limits_{a\in Z\ra}B(a),&\forall Z\subset A\nonumber. 
\end{eqnarray*}
Then there exists a flow 
$\varphi:A\longrightarrow {\bbb R}\cup\{-\infty,\infty\}$ such that 
\begin{eqnarray*}
&b(a)\leq \varphi(a)\leq B(a),&\forall a\in A\\
&\sum\limits_{a\in\ra Z}\varphi(a)=\sum\limits_{a\in Z\ra}\varphi(a),&\forall Z\subset A. 
\label{Kirchoff}
\end{eqnarray*}
We call $\varphi$ a Kirchoff flow compatible with $(P,A,b,B)$.
\end{theorem*}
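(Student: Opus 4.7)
The plan is to derive this as an instance of Hoffman's circulation theorem, by reducing to the classical max-flow min-cut theorem on an augmented network.

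First, I would shift variables: setting $\varphi = b + \psi$ converts the problem to finding a nonnegative $\psi$ with $0 \leq \psi(a) \leq B(a) - b(a)$ such that, at every point $p \in P$,
\[
\sum_{a \in \ra\{p\}} \psi(a) \;-\; \sum_{a \in \{p\}\ra} \psi(a) \;=\; d(p),
\]
where $d(p) := \sum_{a \in \{p\}\ra} b(a) - \sum_{a \in \ra\{p\}} b(a)$. Since $\sum_{p \in P} d(p) = 0$, the numbers $d(p)$ describe a consistent pattern of sources and sinks.

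Next, I would build an auxiliary directed network $N$ on the vertex set $P \cup \{s,t\}$: keep every arc of $A$ with capacity $B(a) - b(a)$; for every $p$ with $d(p) > 0$ adjoin an arc $(s,p)$ of capacity $d(p)$; for every $p$ with $d(p) < 0$ adjoin an arc $(p,t)$ of capacity $-d(p)$. Then a compatible $\psi$ on $(P,A)$ exists if and only if $N$ admits an $s$--$t$ flow that saturates every arc leaving $s$, i.e.\ a flow of value $D := \sum_{d(p)>0} d(p)$.

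Third, I would invoke max-flow min-cut. If no compatible $\psi$ exists, there is an $s$--$t$ cut $(S,T)$ in $N$ whose capacity is strictly less than $D$. Setting $Z := P \cap T$, the cut's capacity decomposes into three parts: the arcs $(s,p)$ with $p \in P \setminus Z$ and $d(p)>0$; the arcs $(p,t)$ with $p \in Z$ and $d(p)<0$; and the original arcs in $Z\!\ra$. Substituting the definition of $d(p)$ and cancelling terms contributed by arcs with both endpoints in $Z$, the inequality ``cut capacity $<D$'' collapses exactly to
\[
\sum_{a \in \ra Z} b(a) \;>\; \sum_{a \in Z\ra} B(a),
\]
contradicting the hypothesis. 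Hence the maximum $s$--$t$ flow equals $D$, and the restriction of this flow to $A$, shifted back by $\varphi := b + \psi$, is the desired Kirchoff flow compatible with $(P,A,b,B)$.

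The main obstacle is the bookkeeping in the third step: verifying that arcs with both endpoints in $Z$ cancel precisely between the two $d$-sums, and handling arcs carrying infinite bounds. Arcs with $b(a) = -\infty$ or $B(a) = +\infty$ impose no effective constraint and, together with the hypothesis, never appear on the ``tight'' side of a violating cut, so the argument reduces after a routine truncation to the finite-capacity case where max-flow min-cut applies verbatim.
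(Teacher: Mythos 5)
The paper does not actually prove this statement: it quotes it from Berge as a known result, and up to notation it is Hoffman's circulation theorem. Your route---shifting by the lower bounds and reducing to max-flow min-cut on an augmented network with a source and a sink---is the standard textbook proof of that theorem, and the skeleton is sound. In particular, the cut hypothesis is exactly what forces every $s$--$t$ cut of the auxiliary network to have capacity at least $D$, and your treatment of the infinite bounds is indeed routine: there are only finitely many sets $Z$, so a single sufficiently large truncation constant preserves all the hypotheses, while any circulation feasible for the truncated bounds is automatically feasible for the original, weaker ones.

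There is, however, an orientation error you must repair before the bookkeeping in your third step can close. With $d(p):=\sum_{a\in\{p\}\ra}b(a)-\sum_{a\in\ra\{p\}}b(a)$, a point with $d(p)>0$ must receive a net \emph{inflow} of $\psi$ on the original arcs, so it has to be joined to the \emph{sink} by an arc $(p,t)$ of capacity $d(p)$, and points with $d(p)<0$ to the source. As written, your network yields (when the maximum flow has value $D$) a $\psi$ whose excess at $p$ is $-d(p)$ rather than $d(p)$, so $b+\psi$ fails the conservation law. Relatedly, your description of which auxiliary arcs cross the cut matches $Z=P\cap S$, not the announced $Z=P\cap T$. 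With the corrected construction and $Z:=P\cap S$, the original arcs crossing from $S$ to $T$ are exactly those of $Z\ra$, with capacities $B-b$, and the cancellation you invoke is the identity $\sum_{p\in Z}d(p)=\sum_{a\in Z\ra}b(a)-\sum_{a\in\ra Z}b(a)$ (arcs internal to $Z$ cancel), which turns ``capacity at least $D$'' into precisely $\sum_{a\in\ra Z}b(a)\leq\sum_{a\in Z\ra}B(a)$. These are sign slips rather than conceptual gaps, but since the entire content of the argument is this bookkeeping, they need to be carried out correctly.
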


\noindent
With this theorem we are in a position to prove the existence of 
coherent angle systems. 

\begin{lemma}\label{nichtleer1}
If $\chi(X)\leq 0$, then the convex set $\WC (T,\theta)$ is non-empty.
\end{lemma}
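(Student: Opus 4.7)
\begin{beweis}
The plan is to encode the linear constraints defining $\WC(T,\theta)$ as the existence of a flow on a suitable directed graph and then to invoke the Compatible Flow Theorem stated above; this is the approach of Colin de Verdi\`ere adapted to the present setting.

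First I would build an antisymmetric directed graph $(P,A)$ by taking $P:=\V\cup\{\ast\}$ with an auxiliary node $\ast$. After fixing an arbitrary orientation on each edge of $T$, I would introduce an arrow $a_e$ for each edge $e$ connecting its two endpoints in the chosen direction, with bounds $b(a_e)=-\theta(e)/2$, $B(a_e)=\theta(e)/2$, and one rigid arrow (with bound $|c(v)|$) between each vertex $v$ and $\ast$, oriented according to the sign of the angular deficit
\[c(v)\;:=\;\pi\;-\;\frac{1}{2}\sum_{e\ni v}\theta(e).\]
In the hyperbolic case $\chi(X)<0$ I would first redistribute a share of the total excess $\pi\chi(X)/2$ across the edges, effectively replacing $\theta$ by a strictly smaller polyhedral weight function for the purpose of the flow construction. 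The identity $\sum_v c(v)=0$, coming from the equation $\sum_e\theta(e)=\pi(\#\E-\#\F)$ that follows from polyhedrality and Euler's formula, then makes the total flow at $\ast$ consistent.

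Given a Kirchhoff flow $\varphi$ compatible with these data, I would define $\psi(\sect{e,v}):=\frac{1}{2}\theta(e)+\varphi(a_e)$ when $a_e$ points out of $v$, and $\frac{1}{2}\theta(e)-\varphi(a_e)$ otherwise. Then $\psi(s)+\psi(\rev s)=\theta(\proj s)$ holds by construction, the Kirchhoff condition at $v$ translates into Condition~C2 by a short substitution, and the bounds give $\psi(s)\in(0,\theta(\proj s))$. For $\chi(X)=0$ this yields $\ex(e)=0$, and for $\chi(X)<0$ the preliminary redistribution makes $\ex(e)<0$; in both cases $\psi\in\WC(T,\theta)$.

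The hard part is the verification of the compatibility hypothesis
\[\sum_{a\in\,\to Z}b(a)\;\leq\;\sum_{a\in Z\to}B(a),\qquad\forall\,Z\subset P.\]
An elementary rearrangement converts this, for subsets $Z\subset\V$, into an inequality of the form $\pi|Z|\leq\sum_{e\cap Z\neq\emptyset}\theta(e)$, together with a complementary estimate when $\ast\in Z$. The boundary in $X$ of the closed star of $Z$ is a disjoint union of reduced contractible loops of edges of $T$, and summing Condition~B1 over these loops reproduces exactly the required inequality. Condition~B2, which is strict away from the case that a loop bounds a single cell, then supplies the slack needed to choose the bounds a touch tighter and thereby produce a flow with strictly interior values. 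The main obstacle I anticipate is this combinatorial translation between cuts of $(P,A)$ and loops of edges on $X$, together with the bookkeeping needed in the case $\chi(X)<0$ to fit the non-zero total excess into the same flow framework.
\end{beweis}
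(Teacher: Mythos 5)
Your overall strategy --- encoding the constraints as a circulation problem and invoking the Compatible Flow Theorem in the spirit of Colin de Verdi\`ere --- is the same as the paper's, but your network is smaller: you put only $\V$ and one auxiliary node into $P$ and hard-wire the relation $\psi(s)+\psi(\rev s)=\theta(\proj s)$ into the parametrization, whereas the paper takes $P=\V\cup\E\cup\{\omega\}$ and lets the quantity $\psi(s)+\psi(\rev s)$ itself be a flow value $\varphi(\omega,e)$, bounded above by $\theta(e)+\tau$. For $\chi(X)=0$ your reduction is essentially sound: passing to the complement $W=\V\setminus Z$ and using $\sum_{e\in\E}\theta(e)=\pi\#\V-\pi\chi(X)$, your cut condition $\pi\#Z\le\sum_{e\cap Z\neq\emptyset}\theta(e)$ becomes exactly the inequality $\sum_{e\subset W}\theta(e)<\pi\#W-\pi\chi(X)$ established in Case~4 of the paper's proof. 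Two caveats there: the bounds $\pm\theta(e)/2$ must indeed be tightened by some $\varepsilon$ to land in the open intervals (you note this), and your topological justification is too quick --- on a surface of higher genus the boundary of the closed star of $Z$ need not be a disjoint union of \emph{contractible} loops, so one cannot simply ``sum B1 over these loops''; the paper instead reconstructs $X$ from the $1$-skeleton spanned by the relevant edges, applies B1 only to the disk components of the complement, and balances the count of the remaining components against the additivity of the Euler characteristic.

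The genuine gap is the case $\chi(X)<0$. Your network forces $\ex(e)=0$ on every edge, so you must first replace $\theta$ by $\theta'(e)=\theta(e)+2\ex(e)$ with $\sum_e\ex(e)=\pi\chi(X)/2$ in order to make $\sum_vc(v)$ vanish. But (i) such a $\theta'$ can never be a polyhedral weight function: Condition B2 applied to the cell boundaries forces $\sum_e\theta'(e)=\pi\#\E-\pi\#\F=\pi\#\V-\pi\chi(X)\neq\pi\#\V$, which contradicts $\sum_vc'(v)=0$; and (ii) more seriously, the redistribution is not a free choice. The cut conditions for your network with $\theta'$ read $\sum_{e\subset W}\bigl(\theta(e)+2\ex(e)\bigr)<\pi\#W$ for every proper nonempty $W\subset\V$, while polyhedrality of $\theta$ only yields $\sum_{e\subset W}\theta(e)<\pi\#W+\pi|\chi(X)|$, a bound that may well exceed $\pi\#W$ (e.g.\ two vertices joined by several edges no two of which form a contractible loop, each carrying a weight close to $\pi$). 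So the negative excess, whose total budget is only $\pi\chi(X)/2$, must be placed on exactly the right edges for \emph{all} such overloaded sets $W$ simultaneously --- and deciding that this is possible is itself a compatible-flow problem of the same nature and difficulty as the lemma. The paper sidesteps this entirely by keeping $\E$ inside the network, so that the Compatible Flow Theorem produces the redistribution $\varphi(\omega,e)\le\theta(e)+\tau$ and the angles in one stroke. To repair your argument you would either have to prove the existence of a valid $\theta'$ (which amounts to redoing the lemma) or enlarge your network as the paper does.
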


\begin{beweis}
If $\theta: \E\longrightarrow \ ]0,\pi[$ 
is a polyhedral weight function we have 
\begin{eqnarray}\label{eqn1001}
\pi \cdot\# \V&=&\pi\cdot\# \E-\pi\cdot\#\F+\pi\chi(X)\nonumber\\&=&
\pi\cdot\# \E-\sum_{e\in \E}(\pi-\theta(e))\,+\pi\chi(X) \\&=&
\sum_{e\in \E}\theta(e)\,+\pi\chi(X)\nonumber.
\end{eqnarray}
Consider the finite set $P:=\V\cup \E\cup \{\omega\}$, where $\omega$ is a virtual point, 
together with the relation 
$$A=\Se\cup\left\{(\omega,e)\mid e\in \E\right\}\cup \left\{(v,\omega)\mid v\in \V\right\}.$$
For $\varepsilon >0$, $\tau\leq 0$ we define flows
$b_\varepsilon,B_\tau:A\rightarrow {\bbb R}\cup\{-\infty,\infty\}$ 
by (Figure~\ref{diagramme}a):\medskip
$$\begin{array}{lll}
b_\varepsilon (s)=\varepsilon,& B_\tau (s)=\infty,&\forall s\in \Se \\
b_\varepsilon (\omega,e)=-\infty,& B_\tau (\omega,e)=\theta(e)+\tau,&\forall e\in \E\\
\multicolumn{2}{l}{b_\varepsilon (v,\omega)= B_\tau (v,\omega)=\pi,}
\hspace{1cm}&\forall v\in \V.
\end{array}$$

\Figur{diagramme}
{\hspace*{0em}Figure~\ref{diagramme}{\rm a}\hspace{11em}Figure~\ref{diagramme}{\rm b}}{-1}{5}

\noindent
If the sign of $\tau$ equals the sign of $\chi(X)$, then we call
 $(P,A,b_\varepsilon,B_\tau)$ a flow diagram of $(T,\theta)$.
 Assume that 
$\varphi$ is a Kirchoff flow compatible with a flow diagram 
$(P,A,b_\varepsilon,B_\tau)$. 
If $\chi(X)<0$, we have $\varphi(\omega,e)<\theta(e)$, for all $e\in\E$. 
On the other hand, if $\chi(X)=0$, then 
Equation (\ref{eqn1001}) together with 
$$\sum_{a\in\ra\{w\}}\varphi(a)=\sum_{a\in\{w\}\ra}\varphi(a)$$
 imply that 
$\varphi(\omega,e)=\theta(e)$ for every $e\in \E$. 
Hence, for every oriented edge $s$ of $T$ 
and every $p\in\MS$ whose curvature equals the sign of $\chi(X)$, 
there is a geodesic triangle in the metric space $(\reg p,p)$ with angles $\varphi(s),\varphi(-s),\pi-\theta(\proj s)$. 
Thus, the restriction of $\varphi$
 to $\Se$ is an element of $\WC(T,\theta)$.

If we can show that for every subset $Z$ of $A$ there is an  $\varepsilon>0$, $\tau\leq 0$ such that 
$(P,A,b_{\varepsilon},B_{\tau})$ is a
 flow diagram 
 of $(T,\theta)$ fulfilling
\begin{equation}\label{eqn1010}
\sum_{a\in Z\ra}B_\tau(a)\geq\sum_{a\in\ra Z}b_\varepsilon(a),
\end{equation}
then the above theorem states that there is a flow $\varphi$ 
compatible with a flow diagram $(P,A,b_{\varepsilon'},B_{\tau'})$. 
This implies that $\WC(T,\theta)\neq\emptyset$. 

Let $Z\subset P$, $Z_E:=Z\cap \E$, $Z_V:=Z\cap \V$. By 
$\ek{Z_E\rightarrow \V\setminus Z}$
 we denote the set of those elements of $A$ having its initial point in $Z_E$ and 
its end point in $\V\setminus Z$. We distinguish four cases:

\noindent\makebox
{{\bf\sc Case 1:$\quad\omega\not\in Z,\ Z_E\neq\emptyset$: }}   
 There is a $y\in\ra \!\!\!Z$ with $b_\varepsilon (y)=-\infty$ and  inequality (\ref{eqn1010})
holds for all $\varepsilon,\tau\in{\bbb R}$.

\noindent\makebox
{{\bf\sc Case 2:$\quad\omega\not\in Z,\ Z_E=\emptyset$: }}   
 If $Z_V=\emptyset$ there is nothing to show. Otherwise, we have 
$$\sum\limits_{y\in Z\ra}B_\tau (y)\geq\pi, 
\quad\sum\limits_{y\in \ra Z}b_\varepsilon (y)\leq\varepsilon\cdot\#\V$$ 
and 
(\ref{eqn1010}) holds for  some $\varepsilon >0$.

\noindent\makebox
{{\bf\sc Case 3:$\quad\omega\in Z,\ \ek{Z_E\rightarrow \V\setminus Z}\neq\emptyset$: }}  
  There is a $y\in Z\!\!\!\ra$ with $B_\tau (y)=\infty$.

\noindent\makebox
{{\bf\sc Case 4:$\quad\omega\in Z,\ \ek{Z_E\rightarrow \V\setminus Z}=\emptyset$: }}
 (i.e. if $e\in Z_E$, then both  vertices incident to $e$ are in $Z_V$). 
we have
\begin{eqnarray*}
\sum_{y\in Z\ra}B_\tau(y)&=&\sum_{e\in \E\setminus Z}\!\!\!\theta(e)+
\tau\cdot\#( \E\setminus Z)\\
&=&\sum_{e\in \E}\theta(e)-\sum_{e\in Z_E}\theta(e)
+\tau\cdot\#( \E\setminus Z)\\
&=&\pi\cdot(\#\V-\chi(X))-\sum_{e\in Z_E}\theta(e)
+\tau\cdot\#( \E\setminus Z),
\end{eqnarray*}
where the last equality follows from (\ref{eqn1001}). On the other hand,  
$$\sum_{y\in\ra Z}b_\varepsilon (y)=\varepsilon\cdot\#\ek{\E\setminus Z\ra Z_V}
+\pi\cdot\# (\V\setminus Z).$$
Hence, we have to show that 
\begin{equation}\label{eqn1020}
\pi\cdot\#Z_V-\pi\cdot\chi(X)>\sum_{e\in Z_E}\theta(e).
\end{equation}
Without loss of generality we may assume that for every $v\in Z_V$ there are at least 
two different edges in $Z_E$ incident to $v$. 
In fact, because $\theta(e)<\pi,\quad\forall e\in \E$, 
 inequality (\ref{eqn1020}) holds if we can prove it under this assumption. 

\newcommand{\OLX}[1]{\overline{X_{#1}}}
Let $|Z_E|\subset X$ be the union of all edges in $Z_E$ and 
 $X_1,\ldots,X_n$  the connected components of 
$X\setminus |Z_E|$. 
Starting with the 1-skeleton $|Z_E|$ we reconstruct $X$ in the following way:
 for every $i\in\{1,\ldots,n\}$ we attach a closed surface $\check X_i$ 
(whose interior is homeomorphic to $X_i$) along its boundary 
$\partial \check X_i$. 
Thus, 
$$\chi(X)=\chi(|Z_E|)+\chi(\check X_1)+\cdots+\chi(\check X_n).$$
We may assume that there is an integer $k\in\{0,\ldots,n\}$ such 
that $\check X_{k+1},\ldots,\check X_n$ are the only surfaces homeomorphic to a closed disk, 
i.e. for every $i\in\{k+1,\ldots,n\}$ 
we attach $\check X_i$ along a reduced 
contractible loop of edges. 
Since closed disks are the only surfaces with boundary, connected interior and positive Euler-characteristic, 
we conclude that 
$$\chi(X)\leq\#Z_V-\#Z_E+(n-k).$$
If the edges $e_1,\ldots,e_m\in Z_E$ 
form  a reduced contractible loop of edges in $X$, then
$$\sum_{i=1}^m (\pi-\theta(e_i))\geq 2\pi.$$ 
Hence, 
\begin{equation}\label{eqn1030}
2\sum_{e\in Z_E}\left(\pi-\theta(e)\right)\geq 2\pi (n-k).
\end{equation}
In (\ref{eqn1030}) we have equality 
if and only if 
 $\check X_{1},\ldots,\check X_n$ are the cells of $T$, i.e. 
$Z_E=\E$.  In this case $\ra\!\!\! Z=Z\!\!\!\ra=\emptyset$ 
and we have nothing to show. 
If $Z_E\neq \E$ we get 
$$\sum_{e\in Z_E}\theta(e)<\pi\cdot\left(\# Z_E- (n-k)\right)
\leq \pi\left(\# Z_V-\chi(X)\right).$$
\end{beweis}
%
%
%
%

\begin{lemma}\label{nonempty_positiv}
If $\chi(X)> 0$, then the convex set $\WC(T,\theta)$ is non-empty. Furthermore, 
if $X$ is homeomorphic to $\NS$ and $f$ is a cell of $T$, then 
$\WCf(T,\theta)\neq\emptyset$.
\end{lemma}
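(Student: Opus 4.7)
The plan is to adapt the flow-diagram technique of Lemma~\ref{nichtleer1} to the positive-curvature setting, proving the two assertions by parallel arguments based on the Compatible Flow Theorem.

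\textbf{First assertion.} For $\WC(T,\theta)\neq\emptyset$ when $\chi(X)>0$, I would use the same underlying graph $(P,A)=(\V\cup\E\cup\{\omega\},\,\Se\cup\{(\omega,e)\}\cup\{(v,\omega)\})$ as in Lemma~\ref{nichtleer1}, but with bounds calibrated for $\chi(X)>0$: retain $b_\varepsilon(s)=\varepsilon$, $B(s)=\infty$, $b(v,\omega)=B(v,\omega)=\pi$, and replace the one-sided bound on $(\omega,e)$-arrows by the two-sided bounds
$$b_{\varepsilon'}(\omega,e)=\theta(e)+\varepsilon',\qquad B_{\tau'}(\omega,e)=2\pi-\theta(e)-\tau',$$
with $\varepsilon',\tau'>0$ small. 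A Kirchoff flow $\varphi$ then yields $\psi:=\varphi|_{\Se}\in\WC$: conservation at each vertex recovers Condition~C2, while the forced value $\varphi(\omega,e)\in\,]\theta(e),2\pi-\theta(e)[\,$ translates via $\varphi(\omega,e)=\psi(s)+\psi(-s)=\theta(e)+2\ex(e)$ into $\ex(e)\in\,]0,\pi-\theta(e)[\,$, fulfilling (\ref{eqn320})--(\ref{eqn321}).

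\textbf{Second assertion.} For $\WCf(T,\theta)\neq\emptyset$ when $X\simeq\NS$, I would exploit the linear description (\ref{restriction}) and run the argument of the $\chi(X)=0$ case of Lemma~\ref{nichtleer1} on the reduced graph $V^*\cup E^*\cup\{\omega\}$, with arrows $S^*\cup\{(v,\omega):v\in V^*\}\cup\{(\omega,e):e\in E^*\}$ and \emph{exact} bounds $b(v,\omega)=B(v,\omega)=\theta(v)$, $b(\omega,e)=B(\omega,e)=\theta(e)$, keeping $b_\varepsilon(s)=\varepsilon$, $B(s)=\infty$ on $S^*$. Positivity of the vertex weights $\theta(v)$ is part of the polyhedral hypothesis, and flow-conservation at $\omega$ reduces to the identity $\sum_{v\in V^*}\theta(v)=\sum_{e\in E^*}\theta(e)$, which follows by combining (\ref{eqn1001}) for $X=\NS$ with the loop identity $\sum_{e\in\partial f}(\pi-\theta(e))=2\pi$ at the distinguished cell $f$. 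Any Kirchoff flow then restricts to an element of $\WCf$ via (\ref{restriction}).

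\textbf{Main obstacle.} The hard step in both parts is the verification of the Kirchoff compatibility inequality (\ref{eqn1010}) in Case~4 ($\omega\in Z$, $\ek{Z_E\rightarrow\V\setminus Z}=\emptyset$); Cases~1--3 of Lemma~\ref{nichtleer1} transfer unchanged. For the first assertion the inequality couples $\pi\#Z_V-\pi\chi(X)-\sum_{Z_E}\theta(e)$ with slack contributions $\varepsilon'\cdot\#(\E\setminus Z_E)$ on one side and $\tau'\cdot\#(\E\setminus Z_E)+\varepsilon\cdot\#\ek{\E\setminus Z\rightarrow Z_V}$ on the other; its verification rests on the combinatorial bound $\chi(X)\leq\#Z_V-\#Z_E+(n-k)$ together with the polyhedral loop estimate $\pi(n-k)\leq\sum_{e\in Z_E}(\pi-\theta(e))$, exactly as in Lemma~\ref{nichtleer1}, but with the twist that the subcase $\#Z_V<\chi(X)$ (which renders the combinatorial bound degenerate) must be excluded using the WLOG that every $v\in Z_V$ is incident to at least two edges of $Z_E$, or else handled directly via the global identity $2\sum_e\ex(e)=\pi\chi(X)$ enforced by conservation at $\omega$. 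For the second assertion the analogous Case~4 inequality is $\sum_{e\in Z_{E^*}}\theta(e)\leq\sum_{v\in Z_{V^*}}\theta(v)$, and the presence of $f$ as a disk component of $X\setminus|Z_{E^*}|$ is what contributes the extra slack that absorbs the positive Euler characteristic of $\NS$ into the polyhedral bound.
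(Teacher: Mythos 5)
Your treatment of the second assertion is essentially the paper's argument: the same reduced network on $V^*\cup E^*\cup\{\omega\}$, the same identity $\sum_{v\in V^*}\theta(v)=\sum_{e\in E^*}\theta(e)$ forcing $\varphi(\omega,e)=\theta(e)$, and the same reduction of the critical compatibility inequality to (\ref{eqn1020}). One small caveat: once you make the bounds on the arrows $(\omega,e)$ finite on both sides, Case 1 of Lemma~\ref{nichtleer1} no longer disposes of itself via $b(\omega,e)=-\infty$ and must be re-verified (it reduces to the same Case-4 inequality, so this is harmless, but it is not ``unchanged'').

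The first assertion is where there is a genuine gap. For $\chi(X)>0$, membership in $\WC$ requires not only $\ex(e)\in\ ]0,\pi-\theta(e)[$ but also $\wh\psi(s)=\psi(s)-\ex(\proj s)\in\ ]0,\theta(\proj s)[$ for every $s$, which is equivalent to $|\psi(s)-\psi(\rev s)|<\theta(\proj s)$. Your two-sided capacity $]\theta(e),2\pi-\theta(e)[$ on the arrow $(\omega,e)$ controls only the sum $\psi(s)+\psi(\rev s)$; it says nothing about the difference, and the Compatible Flow Theorem is free to return a flow with $\psi(s)=\varepsilon$ tiny while $\ex(\proj s)>\varepsilon$, which gives $\wh\psi(s)\leq 0$ and hence a function outside $\WC$. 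This difference constraint is vacuous in Lemma~\ref{nichtleer1} precisely because there $\ex(e)\leq 0$, so $\psi(s)\geq\varepsilon>0\geq\ex(e)$ holds automatically; for positive curvature it is a genuine additional restriction (it encodes the second family of spherical triangle inequalities, $\alpha+\gamma-\beta<\pi$ and its cyclic variants), and it is not expressible as an arc capacity in your network, since it couples $\psi(s)$ and $\psi(\rev s)$ through their difference rather than their sum. The paper sidesteps this entirely: it proves only the $\WCf$ statement by flows, and then obtains $\WC\neq\emptyset$ as a corollary --- for $\chi(X)=2$ by averaging, $\psi:=\frac{1}{\#\F}\sum_{f\in\F}\psi_f$ with $\psi_f\in\WCf$, and for $\chi(X)=1$ by lifting to the sphere and symmetrizing under the non-trivial deck transformation. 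You would need either to adopt that two-step route or to supply a different mechanism that enforces the difference constraints; as written, the flow you construct need not lie in $\WC$.
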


\begin{beweis}
First, assume that $\chi(X)=2$, let $f$ be a {\cell} of $T$ and 
$e_1,\ldots,e_n$ the edges incident to $f$. 
Using characterization (\ref{restriction})
 of the set $\WCf(T,\theta)$  we are going to show 
that $\WCf\neq\emptyset$.
\newline
Consider the finite set $P=V^*\cup E^*\cup \{\omega\}$, where $\omega$ is a virtual point, 
together with the relation 
$$A=S^*\cup\left\{(\omega,e)\mid e\in E^*\right\}\cup
\left\{(v,\omega)\mid v\in V^*\right\}.$$
We define bounding flows 
$b_\varepsilon,B_\tau:A\rightarrow {\bbb R}\cup\{-\infty,\infty\}$
 as in the proof of Lemma~\ref{nichtleer1}{\refpunkt} 
The only modifications are $\tau=0$ and 
$b_\varepsilon (v,\omega)=B_0(v,\omega)=\theta(v)$, $\forall v\in V^*$ 
(Figure~\ref{diagramme}b). 

Let $\varphi$ be a flow compatible with $(P,A,b_\varepsilon,B_0)$.  
Using (\ref{eqn1001}) we conclude that 
\begin{eqnarray*}
\sum_{e\in E^*}\theta(e)&=&
\sum_{e\in \E}\theta(e)-\!\!\!\!\!\sum_{e\in \E\setminus E^*}\!\!\!\!\!\theta(e)=\pi\bigl(\underbrace{\# V^*+n}_{\#\V}
-\chi(X)\bigr)-
\!\!\!\!\!\sum_{e\in \E\setminus E^*}\!\!\!\!\!\theta(e)\\
&=&\pi\cdot\#V^*
\underbrace{-\chi(X)\cdot\pi+\sum_{i=1}^n \left(\pi-\theta(e_i)\right)}_{-2\pi+2\pi=0}
\hspace*{1em}-\hspace*{-1em}\sum_{\zweierindex{e\in \E\setminus E^*}{e\not\in\{e_1,\ldots,e_n\}}}
\!\!\!\!\!\theta(e)
=\sum_{v\in V^*}\theta(v).
\end{eqnarray*} 
Hence, the law of Kirchoff at the point $\omega$ implies that 
$\varphi(\omega,e)=\theta(e)$ for all $e\in E^*$, i.e. the restriction of 
$\varphi$ to $S^*$ 
 is an element of $\WCf$.
 Thus, we have to show that for every subset $Q$ of $P$ there is an $\varepsilon >0$ 
satisfying 
\begin{equation}
\sum_{a\in Q\ra}B_0(a)-\sum_{a\in \ra Q}b_\varepsilon (a)\geq 0.
\end{equation}
 We use the same 
notation and we distinguish the same cases as in the proof of  Lemma~\ref{nichtleer1}{\refthpunkt} 
 Only case $4$ 
is a little bit more delicate. 
 We get
\begin{eqnarray*}
\sum_{y\in Q\ra}B_0(y)&=&
\!\!\!\!\!
\sum_{e\in E^*\setminus Q_{E^*}}\!\!\!\!\theta(e)=
\sum_{e\in E^*}\theta(e)-\!\sum_{e\in Q_{E^*}}\!\theta(e)=
\sum_{v\in V^*}\theta(v)-\sum_{e\in Q_{E^*}}\theta(e),\\
\sum_{y\in \ra Q}b_\varepsilon (y)&=&
\varepsilon\cdot\ek{E^*\setminus Q_{E^*}\ra Q_{V^*}}
+\sum_{v\in V^*\setminus Q_{V^*}}\theta(v).
\end{eqnarray*}  
Hence, we have to show that 
\begin{equation}\label{eqn1100}
\sum_{v\in  Q_{V^*}}\theta(v)-
\sum_{e\in Q_{E^*}}\theta(e) >0.
\end{equation}
Let  $\partial{Q_{E^*}}$ be the set of all $e\in\E\setminus E^*$ 
incident to a $v\in Q_{V^*}$. Then
\begin{eqnarray*}
\lefteqn{\sum_{v\in  Q_{V^*}}\theta(v)-
\sum_{e\in Q_{E^*}}\theta(e) =
\pi\cdot\# Q_{V^*}-\sum_
{e\in \partial{Q_{E^*}}}
\theta(e) -\sum_{e\in Q_{E^*}}\theta(e)}\\
& &=\pi\cdot\# Q_{V^*}-\sum_
{e\in \partial{Q_{E^*}}}
\theta(e) -\sum_{e\in Q_{E^*}}\theta(e)+
\underbrace{\sum_{i=1}^n\left(\pi-\theta(e_i)\right)-\pi\chi(X)}_{=0}
\end{eqnarray*}
and (\ref{eqn1100}) reduces to
 \begin{equation}\label{eqn_5000}
\pi\cdot(\#Q_{V^*}+\#\{v_1,\ldots,v_n\})-\pi\cdot\chi(X) >
\sum_{e\in Q_{E^*}}\theta(e)+
\sum_{e\in \partial{Q_{E^*}}}\theta(e) 
+\sum_{i=1}^n\theta(e_i).
\end{equation}
Setting 
$Z_V:=Q_{V^*}\cup\{v_1,\ldots,v_n\}$ and 
$Z_E:=Q_{E^*}\cup\partial Q_{E^*}\cup\{e_1,\ldots,e_n\}$ 
this inequality is just inequality (\ref{eqn1020}) found in the proof of 
Lemma~\ref{nichtleer1}{\refthpunkt}
Since we did not use the restriction $\chi(X)\leq 0$ in the proof of (\ref{eqn1020}), inequality  (\ref{eqn_5000}) holds.
\smallskip\newline\indent
For every $f\in \F$ let $\psi_f$ be an element of $\WCf$. We get an 
element $\psi\in\WC$ 
by 
$$\psi:=\frac{1}{\#\F}\sum_{f\in \F}\psi_f.$$

Now assume that $\chi(X)=1$.
 Then $\wt T$ is a cell decomposition 
of ${\bf S}^2$ and $\wt\theta$ is a polyhedral weight function. 
Hence, there exists an element 
$\psi\in\WC(\wt T,\wt\theta)$. 
Let $g$ be the 
non-trivial covering transformation. If $(e,v)\in\wtS$ 
we define 
$g(e,v)=(g(e),g(v))$.
 We get 
an element $\wb\psi\in \WC(T,\theta)$ by defining 
$\wb\psi(s):=\frac12(\psi\circ g+\psi)(\ppi^{-1}(s))$ for all $s\in \Se$.
\end{beweis}
%
%
\Paragraph{Volume of Ideal Polyhedra}\label{volumen}
The aim of this chapter is to prove the following. 

\begin{theorem}\label{Thm_vol}
Let $T$ be a cell decomposition  of a compact surface $X$.  
If $\psi$ is the angular datum of a $(T,\theta)$-configuration ${ \cK}$, 
then 
$$\vol \left( \CH { \cK}/_{\displaystyle \pi_1(X)}\right)
=\LaT(\psi).$$
\end{theorem}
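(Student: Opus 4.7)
\noindent
The goal is to compute the volume of a fundamental domain for the $\pi_1(X)$-action on $\CH\cK$ by decomposing it into elementary hyperbolic pieces whose volumes admit closed-form Lobachevsky expressions, and then to recognize the sum as $\LaT(\psi)$ via the representation (\ref{normalform}).

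\emph{Step 1 (decomposition).} I will use the quadrangle cell decomposition $\{Q_m(e) : e \in \wtE\}$ of $\reg\cK$ from Section~\ref{param} to produce a dual decomposition of $\CH\cK$. For each edge $e \in \wtE$ with endpoints $v, w$ and incident cells $f, g$, the piece of $\CH\cK$ lying ``above'' $Q_m(e)$ (bounded by the two facets in $\H{\dual v}, \H{\dual w}$, the edge $\dual e$, and the two apices $\dual f, \dual g$) is a polytope with four ideal vertices $\dual f, \dual g, \Mi m{\dual v}, \Mi m{\dual w}$ in $\partial\HY 3$. I subdivide this piece into four orthoschemes, one for each flag $(\sect{e,v}, f)$ — that is, one per oriented edge $s$ together with a choice of incident cell. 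The resulting collection is $\pi_1(X)$-equivariant, so a fundamental set of flags yields a tiling of $\CH\cK/\pi_1(X)$ up to measure zero.

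\emph{Step 2 (volume per piece).} For the flag orthoscheme $O(s,f)$, I will identify its essential dihedral angles with the angular data: the angle at the edge emanating from $\dual f$ along the facet $\H{\dual v}$ equals $\wh\psi(s)$, while the angles associated with the edge $\dual e$ involve $\ex(e), \wh\gamma(e)$, and $\theta(e)$. The identification reduces to the half-side formula~(\ref{formula}) of non-Euclidean trigonometry, which relates the side lengths of the triangles $\tr m(e)$ to the angular data and hence pins down the orthoscheme's geometry in $\HY 3$. Applying the Milnor volume formula for orthoschemes with ideal vertices, I expect
\[
\vol O(s,f) \;=\; \tfrac{1}{2}\lob(\wh\psi(s)) \,-\, \tfrac{1}{4}\bigl(\lob(\wh\gamma(e)) + \lob(\ex(e)) + \lob(\theta(e))\bigr).
\]

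\emph{Step 3 (summation).} Sum over a fundamental set of flags. Each oriented edge $s$ appears in exactly two flags (one per cell incident to $\proj s$), yielding $2 \times \tfrac12 = 1$ copy of $\lob(\wh\psi(s))$; each unoriented edge $e$ appears in $2 \times 2 = 4$ flags, yielding $-1$ copy of each of $\lob(\wh\gamma(e)), \lob(\ex(e)), \lob(\theta(e))$. The total collapses to the right-hand side of~(\ref{normalform}), which equals $\LaT(\psi)$.

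\emph{Main obstacle.} The crucial difficulty is Step 2: correctly matching the essential dihedral angles of the orthoscheme $O(s,f)$ with the combinatorial data $\wh\psi(s), \ex(e), \wh\gamma(e), \theta(e)$, and then invoking the right volume formula for an orthoscheme with one or two ideal vertices. A subsidiary technical issue arises when $\reg\cK \subsetneq \CS$ (i.e.\ $\chi(X) \leq 0$), where $\CH\cK$ has an additional boundary facet lying on the hyperplane $\H m$: one must verify that the decomposition above still tiles the fundamental domain (possibly after incorporating pieces touching this extra facet) and that no boundary contribution is missed. Finally, the Euclidean degeneration $\chi(X)=0$, where $\ex(e)\equiv 0$, is recovered by continuity of $\lob$ together with the antiperiodicity identity $\lob(\pi - \theta) = -\lob(\theta)$, which cancels the edge terms and reduces (\ref{normalform}) to the formula~(\ref{mitterand}).
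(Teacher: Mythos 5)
Your overall strategy --- decompose $\CH\cK/\pi_1(X)$ into orthoschemes indexed by flags (oriented edge together with an incident cell), express each volume through the Lobachevsky function, and recognize the total as the right-hand side of (\ref{normalform}) --- is the same as the paper's, and your Step~3 bookkeeping is arithmetically consistent with the definition of $\LaT$. But the argument has a genuine gap exactly where you place the ``main obstacle'': Step~2 is not a computation but a formula reverse-engineered from the desired answer, and as a statement about a \emph{single} orthoscheme it is false. The signed volume of the orthoscheme attached to the flag $(s,f)$ is, by Kellerhals' formula, $\Fau{\tau_m(s),\delta_m(s)}$ with $\Fau{x,y}=\frac14\lob(x+\frac\pi2-y)+\frac14\lob(-x+\frac\pi2-y)+\frac12\lob(y)$, where the auxiliary dihedral angle $\delta_m(s)=\pm\,\omega_\gamma(\alpha,\beta)$ is an $\arctan$ expression in $\alpha=\psi(s)$, $\beta=\psi(-s)$, $\gamma=\pi-\theta(\proj s)$. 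This depends on $(\alpha,\beta,\gamma)$ in a genuinely different way from $\frac12\lob(\wh\psi(s))-\frac14\bigl(\lob(\wh\gamma(e))+\lob(\ex(e))+\lob(\theta(e))\bigr)$: for instance, $\partial/\partial\beta$ of your expression contains no $\log|2\sin\wh\psi(-s)|$ term, while $\partial/\partial\beta$ of $\Fau{\alpha,\omega_\gamma(\alpha,\beta)}$ does. Only the \emph{sum} of the four orthoscheme volumes attached to one unoriented edge collapses to $\Ino{\theta(\proj s)}(\wh\psi(s))-\Imi{\theta(\proj s)}(\ex(\proj s))$, and that collapse is a nontrivial identity for the Lobachevsky function (Lemma~\ref{lem_tec}, proved by matching values at degenerate configurations and partial derivatives almost everywhere). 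Without that identity, or an equivalent one, the proof does not close; likewise your identification of the orthoscheme angle ``along the facet'' with $\wh\psi(s)$ is not correct --- the relevant angle is $\tau_m(s)\in\{\psi_m(s),\pi-\psi_m(s)\}$, not the reduced angle $\wh\psi(s)$.

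A second gap is that your pieces do not literally tile $\CH\cK$, so an unsigned sum of volumes is not available. When $\psi_m(s)>\pi/2$ the cone over the corresponding half-facet protrudes from $\fac v$, and when $\chi(X)>0$ the apex $m$ must be taken in $\MR$ (there is in general no $\pi_1(X)$-invariant ideal point), so cones from $m$ over facets can overlap or leave $\CH\cK$. One needs a \emph{signed} decomposition: each orthoscheme carries an index $\varepsilon_m(s)\in\{-1,0,+1\}$ determined by the position of $m$ relative to $\H{\dual v}$, $\H{\dual w}$ and of the projected center relative to $\dual{\proj s}$, and the angles $\tau_m(s)$, $\delta_m(s)$ must be identified case by case consistently with these signs (equations (\ref{elegant}), (\ref{Fuetzli}), (\ref{auch_das_noch})). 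Relatedly, your description of the piece over $Q_m(e)$ as a polytope with ideal vertices at the $m$-centers $\Mi m{\dual v}$, $\Mi m{\dual w}$ is inaccurate: those points of $\CS$ are separated from $\CH\cK$ by the hyperplanes $\H{\dual v}$, $\H{\dual w}$, and the corresponding orthoscheme vertices are the finite points where the geodesic from $m$ toward $\Mi m{\dual v}$ pierces $\H{\dual v}$. The extra facet on $\H m$ that worries you in the case $\chi(X)<0$ is in fact the harmless case (there all indices equal $+1$ and the cones toward $\H m$ do tile $\CH\cK$); the delicate case is $\chi(X)>0$.
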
 

\begin{remark*}
Let $X$ be homeomorphic to $\CS$ and $f$ a cell of $T$. 
Since $\LaT$ is continous on $\clWT$, Theorem~\ref{Thm_vol} still holds 
if $\psi$ is the $f$-stereographic angular datum of ${\cal A}$  
\end{remark*}

\abschnitt{Dihedral Angles of Convex Polyhedra.}\label{dihedral}
In this section we will relate the dihedral angles of a
 polyhedron in $\MR$ with angles of 
geodesic polygons. For this purpose we need some preparations: 

Let $\Lambda$ be a closed convex subset of $\MR$. We define the dimension of
 $\Lambda$ as the dimension of the smallest totally geodesic subset of $\MR$ containing 
$\Lambda$. 
A hyperplane ${\cal H}$ of $\MR$ is called a \new{supporting} hyperplane if 
$\Lambda$ is contained in a closed half-space bounded by ${\cal H}$ and 
${\cal H}\cap\Lambda\neq\emptyset$. If ${\cal H}$ is a 
supporting hyperplane such that 
  $\Lambda$ is not contained in ${\cal H}$, then we call 
${\cal H}\cap\Lambda$ a \new{face} of $\Lambda$.
If $\dim\Lambda\geq 1$, a face of dimension $\dim\Lambda-1$ is called a 
\new{facet} of $\Lambda$. 

Let $n\in\MR$. If $m\in\MR$ (respectively, $m\in\MD$), 
then we define $[n,m]$ to be the shortest geodesic segment 
joining $n$ and $m$ (respectively, joining $n$ with a point of the 
hyperplane $\H m$). If $m\in\partial\MR$, then we 
define $[n,m]$ as the geodesic half-line 
starting at $n$ and converging to $m$. We call $[n,m]$ the 
\new{geodesic join} of $m$ and $n$. 
A non-empty 
convex subset $M$ of $\MR$ 
is said to \new{pass} 
through $m\in\MS$ if for every $n\in M$ the geodesic join $[m,n]$ is 
a subset of $M$. 
For arbitrary $m,n\in\MS$ we define the geodesic join 
$[m,n]$ to be the intersection of all geodesic segments, half-lines and lines 
passing through $m$ and $n$.  

Let $M$ be a  subset of $\MR$ and $m\in\MS$. We project $M$ to the boundary 
$\partial\MR$ in the following way: we define $\Lim m M$ to be the set of all 
points $n$ in $\reg m$ such that the intersection of $[m,n]$ and  $M\setminus\{m\}$ is non-empty. 
For a viewer `sited' at $m$, the set $\Lim m M$ is just that 
part of $\reg m$ which is hidden by $M$. 
If $M$ is a hyperplane passing through $m$,
 then there is a unique reflection 
$\Phi \in \CO$ fixing  $M$ pointwise. 
Hence, $\Phi$ fixes $m$, i.e. $\Phi\in\st m$. 
Since the elements of the group $\st m$ are the similarities
 of the metric space 
$(\reg m,m)$, the reflection $\Phi$ is a isometry in $(\reg m,m)$. This 
isometric reflection fixes $\Lim m M=\partial M\cap\reg m$ pointwise. Thus,
 the set   
$\Lim m M$ is a geodesic line in $(\reg m,m)$. Since for any pair 
of points $x,y\in\reg m$ there is a hyperplane passing through $x,y$ and $m$, 
 every geodesic line in $(\reg m,m)$ arises in this way. 

Let $\Lambda$ be a 3-dimensional closed convex subset of $\MR$, 
$m$ a point of  $\in\MS$
and  ${\frak f}_1,\ldots,{\frak f}_n$ the facets of $\Lambda$
 passing through  $m$.  
Furthermore assume that $\Lim m \Lambda$ is a polygon in $\reg m$ 
bounded by $\Lim m {{\frak f}_1},\ldots, \Lim m {{\frak f}_n}$. 
 Since each facet ${\frak f}_1, 
\ldots,{\frak f}_n$ is contained in a hyperplane passing through $m$, 
the segments $\Lim m {{\frak f}_1},\ldots,\Lim m {{\frak f}_n}$ are geodesic segments in 
$(\reg m,m)$. 
Combining this with the fact that every hyperplane intersects $\CS$ 
perpendicularly, we conclude that the angles of the geodesic polygon $\Lim m\Lambda$ 
in $(\reg m,m)$ coincide   
with the dihedral angles of the polyhedron $\Lambda$ at the `vertex' $m$. 
\abschnitt{Decomposition of 
\protect\boldmath$\CH { \cK}$ 
into a Set of Signed Simplices.}\label{signed_simplices}
Let  $M_i\subset\MR, i\in I$ be a family of subsets and
 $\varepsilon_i\in \{-1,0,1\}$, $i\in I$ a family of flags.  
We define the union of the signed sets $\varepsilon_i M_i$ as 
$$
\bigcup_{\{i\in I\mid\varepsilon_i=+1\}}\!\!\!\! M_i \quad\setminus
\bigcup_{\{i\in I\mid\varepsilon_i=-1\}}\!\!\!\! M_i.$$

If  $m$ is a point 
of $\MS$ and $\lambda\subset\MR$ a closed convex set, then   
we define the \new{cone} with \new{base} $\lambda$ and \new{apex} $m$ as
 the smallest convex subset of $\MR$ containing $\lambda$ and passing through 
$m$. We denote this closed convex
 subset of $\MR$ by 
$\Cone m\lambda$. Thus, $\Cone m\lambda$ is just 
 the union of 
all geodesic joins $[m,x],\ x\in\lambda$.

We are going to decompose convex sets into signed cones. 
Let $\lambda$ be a facet of a closed convex set 
$\Lambda$ and $m\in\MS$. 
We define an 
index $\Index m\lambda\Lambda$ indicating the sign of $\Cone m \lambda$  by 
$$ \Index m\lambda\Lambda:=\left\{
\begin{array}{rll}
+1&\mbox{if}&\Cone m \lambda\cap\Lambda\neq\lambda,\\
0&\mbox{if}&\dim\Cone m \lambda=\dim\lambda,\\
-1&\mbox{else}.&
\end{array}
\right.$$
If $m\in\MR$ see Figure~\ref{index}, where $\dim\Lambda=2$, 
$H$ denotes the hyperplane carrying $\Lambda$ and the dotted line is the 
geodesic carrying $\lambda$.

\vspace*{2ex}
\Figur{index}
{\vspace*{-2em}$$\Index{m}{\lambda}{\Lambda}=+1\hspace{3.2em}
\Index{m}{\lambda}{\Lambda}=0\hspace{3.2em}
\Index{m}{\lambda}{\Lambda}=-1
$$
\vspace*{-1em}
\newline
Figure~\ref{index}}{-1}{3}

Let $T$ be a cell decomposition of a compact surface $X$ and $\cK$ a $(T,\theta)$-con\-fi\-gu\-ra\-tion. 
For every $v\in\wtV $ the
intersection of the hyperplane $\H{\dual v}$ with $\CH\cK$ is a facet of the 
set $\CH\cK$. We denote this facet by $\fac{v}$. 
Let $m\in\MS$ such that $\reg m=\reg\cK$ and $m$ is $\pi_1(X)$-invariant. 
We decompose  
$\CH\cK$ in a set of signed cones with apex $m$ and bases $\fac v$, $v\in\wtV$: 
\begin{equation}
\label{pol->cone}
\CH{ \cK}\ =\hspace{-1em}
\bigcup_{\zweierindex{w\in \wtV}{\NorIndex m {\fac w} {\CH\cK}=1}}\hspace{-1em}
\Cone m{\fac w} 
\quad\setminus\hspace{-1em}
\bigcup_{\zweierindex{w\in \wtV}{\NorIndex m {\fac w} {\CH\cK}=-1}}\hspace{-1em}
\Bigr(\Cone m{\fac w} \setminus\fac w\Bigl) .
\end{equation} 
If $m\in\MR$, then 
Figure~\ref{zerlegung} illustrates this decomposition. It shows the intersection of $\CH\cK$ with a hyperplane containing $m$. 
\subsubsection*{}

\Figur{zerlegung}
{\vspace{-2ex}\hspace*{11em}$$\Index m {\fac {v_3}} {\CH\cK}=-1\hspace{7em}
\Index m {\fac {v_i}} {\CH\cK}=1,\ i=1,2,4$$
\vspace*{-1em}
\newline
Figure~\ref{zerlegung}}{0}{4}

\noindent
If $m\in\partial\MR\cup\MD$, then $\Cone m{\fac w}\subset\CH\cK$ 
for any $w\in\wtV$. Hence, 
$\Index m {\fac {w}} {\CH\cK}=1,\quad\forall w\in\wtV$ and 
$$\CH{ \cK}\ =\bigcup_{w\in \wtV}\Cone m{\fac w}. $$

Our final aim is to decompose $\CH\cK$ in a set of signed simplices. For that purpose we first 
decompose the facets $\fac w$, $w\in\wtV$. Let $v$ be an arbitrary but fixed vertex of $\wt T$ and 
$$E(v):=\{e\in E(\wt T)\mid e\mbox{ incident to $v$ }\}.$$ 
The facets of the 2-dimensional closed convex set $\fac v$ are just the geodesic lines $\dual e$, $e\in E(v)$ (see~\ref{Hull}). 
In the same way as we decomposed $\CH\cK$ 
in a set of cones with apex $m$, we now decompose the facet $\fac v$. First we  
project the metric $m$ to the hyperplane $\H{\dual v}$. 
Namely, if $g$ is the geodesic line passing through $m$ and $\dual v$, 
then we define $m(v)$ as the piercing point of $g$ with the hyperplane 
$\H{\dual v}$. We have 
\begin{equation}
\label{facet->triangle}
\fac v =\hspace{-1cm}
\bigcup_{\zweierindex{e\in E(v)}{\NorIndex{m(v)}{\dual e}{\fac v}=1}}
\hspace{-1cm}\Cone{m(v)}{\dual e} 
\quad\setminus\hspace{-1.5em}
\bigcup_{\zweierindex{e\in E(v)}{\NorIndex{m(v)}{\dual e}{\fac v}=-1}}
\hspace{-1cm}
\left(\Cone{m(v)}{\dual e}\,\setminus\,\dual e\right)  .
\end{equation}
We now combine  (\ref{pol->cone}) and (\ref{facet->triangle}).
Let $s\in \wtS$ be incident to $v$. We define 
$$\Fig m s:=\Cone m {\Cone{m(v)}{\dual{\proj s}}}$$
and an index $\ep m s\in\{-1,0,1\}$ by 
$$\ep m s:=\Index{m}{\fac v}{\CH\cK}\cdot\Index{m(v)}{\dual {\proj s}}{\fac v}.$$
Thus, if the symbol `$\approx$' means equality up to a set of measure zero, we get
\begin{equation}\label{pol->sim}
\CH{ \cK}\ \approx\bigcup_{\zweierindex{t\in\wtS}{\ep m t=+1}}
\Fig m t\quad\setminus\bigcup_{\zweierindex{t\in\wtS}{\ep m t=-1}}
\Fig m t.
\end{equation}
%
%

\abschnitt{Proof of Theorem~\ref{Thm_vol}}
Consider decomposition (\ref{pol->sim})  and let $t\in\wtS$. 
Since $m$ is $\pi_1(X)$-invariant 
and $\cK$ is  $\pi_1(X)$-equivariant, we have 
$\ep m t=\ep m{g(t)}$ and $g(\Fig m t)=\Fig m {g(t)}$, 
$\forall  g\in\pi_1(X)$.  
Thus, for $s\in \Se$ the numbers 
$\ep m s:=\ep m{\ppi^{-1}(s)}$, $\vol\Fig m s:=\vol\Fig m {\ppi^{-1}(s)}$ 
are well defined and 
$$\label{vol_dec}
\vol \left(\CH { \cK}/_{\displaystyle \pi_1(X)}\right)
=\sum_{s\in \Se}\ep m s\cdot\vol\Fig m s.$$
Theorem~\ref{Thm_vol} follows now from Lemma~\ref{lem_vol}{\refthpunkt}

\begin{lemma}\label{lem_vol}
Let $\psi_m$ be the angular $m$-datum of a $(T,\theta)$-configuration $\cK$. 
For every $s\in\Se$ the following volume formula holds:
$$
\varepsilon_m(s)\cdot\vol\Fig m s + \varepsilon_m(-s)\cdot\vol\Fig m {-s}=
\Ino{\theta (\proj s)}(\wh\psi_m(s)) -\Imi{\theta(\proj s)}(\ex_m(\proj s))
.
$$
\end{lemma}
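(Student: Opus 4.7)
The plan is to compute each volume $\vol \Fig m s$ explicitly as a combination of Lobachevsky values and then take the signed sum. First I would unpack the geometry: by definition, $\Fig m s = \Cone m{\Cone{m(v)}{\dual{\proj s}}}$ is the tetrahedron in $\MR$ with vertices $m$, $m(v)$, $\dual f$, $\dual g$, where $v$ is the vertex incident to $s$, where $f,g$ are the two cells of $\wt T$ incident to $e:=\proj s$, and where $m(v)$ is the foot of the perpendicular from $m$ to the hyperplane $\H{\dual v}$. The two vertices $\dual f, \dual g$ are ideal, and by construction the edge $[m,m(v)]$ is orthogonal to the face $\{m(v),\dual f,\dual g\}\subset\H{\dual v}$. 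The tetrahedron $\Fig m{-s}$ has the analogous shape with $m(w)$ in place of $m(v)$, where $w$ is the other vertex incident to $e$; both tetrahedra share the ideal edge $\dual e$ from $\dual f$ to $\dual g$.

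Next I would identify the angles of $\Fig m s$ in terms of the datum $\psi_m$ and the weight $\theta$. Using the argument of Section~\ref{dihedral}, the angles of the geodesic polygon $\Lim{\dual v}{\CH\cK}$ in $(\reg{\dual v},\dual v)$ coincide with dihedral angles of $\CH\cK$ at $\fac v$; combined with the fact that the decomposition $\{Q_m(e)\}$ endows $\reg{\dual v}$ with the angles $2\psi_m(s)$ at the $m$-center, the doubly-asymptotic triangle $\{m(v),\dual f,\dual g\}$ in $\H{\dual v}$ has angle exactly $2\wh\psi_m(s)$ at its finite vertex $m(v)$. Dually, the dihedral angle at $\dual e$ between the two tetrahedra $\Fig m s$ and $\Fig m{-s}$ on either side splits the interior dihedral angle $\pi-\theta(e)$ of $\CH\cK$, and the ``tilt'' of $m$ off of $\H{\dual v}$ contributes the excess $\ex_m(e)$; this identifies the remaining dihedral angles in terms of $\ex_m(e)$ and $\wh\gamma(e)=\pi-\theta(e)-\ex_m(e)$.

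With these angles in hand, the third step is to apply the classical volume formula for a hyperbolic tetrahedron with two ideal vertices. Such a tetrahedron, being a right cone over a doubly-asymptotic triangle, decomposes via the perpendicular from $m(v)$ to $\dual e$ into two congruent Coxeter orthoschemes, each with a single ideal vertex; the volume of such an orthoscheme is a sum of three values of the Lobachevsky function $\lob$ at its essential dihedral angles (Milnor's formula). After reassembling, $\vol\Fig m s$ becomes a combination of $\lob$ evaluated at $\wh\psi_m(s)$, $\ex_m(e)$, $\wh\gamma(e)$, and constants depending only on $\theta(e)$. Adding $\ep m s\vol\Fig m s + \ep m{-s}\vol\Fig m{-s}$, the terms odd under $s\leftrightarrow -s$ cancel while those even in that swap combine via $\wh\psi_m(s)+\wh\psi_m(-s)=\theta(e)$ into the block
\[
\lob(\wh\psi_m(s))+\lob(\wh\psi_m(-s))-\lob(\ex_m(e))-\lob(\wh\gamma(e))-2\lob(\theta(e)/2)+2\lob((\pi-\theta(e))/2),
\]
which is precisely $\Ino{\theta(\proj s)}(\wh\psi_m(s))-\Imi{\theta(\proj s)}(\ex_m(\proj s))$ in view of the definition of $\I_\phi$.

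The main obstacle is bookkeeping of the signs $\ep m s$, $\ep m{-s}$ and the orientation of the orthoschemes in the geometric cases where $m\not\in\CH\cK$ (so that the cone $\Cone m{\fac v}$ overshoots $\CH\cK$ and gives a negative contribution) or where $m(v)$ falls outside the facet $\fac v$ (so that $\Cone{m(v)}{\dual e}$ is the difference rather than the union of simpler pieces). To handle all cases uniformly, I would establish the formula first when $m$ is interior to $\CH\cK$ and both indices are $+1$, and then verify that the Schläfli differential of both sides agrees as $m$ is moved along a path in $\MR$; since the right-hand side depends only on the angular datum and the defining identities $\wh\psi(s)+\wh\psi(-s)=\theta(e)$, $\ex+\wh\gamma=\pi-\theta$ are preserved by such variations, the signed identity then extends to arbitrary $m\in\MS$ with $\reg m=\reg\cK$ and in particular to the $m\in\partial\MR\cup\MD$ cases needed to apply the lemma globally.
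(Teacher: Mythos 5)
Your geometric setup coincides with the paper's: the hyperplane through $m$, $\dual v$ and $\dual w$ meets $\H{\dual v}$ in the perpendicular from $m(v)$ to $\dual{\proj s}$, so your splitting of $\Fig m s$ into two orthoschemes with one ideal vertex each is exactly the paper's $\Ort m {\wt s}$ and its mirror image, and the volume formula you invoke is the one the paper takes from Kellerhals, $\vol\Ort m{\wt s}=\Fau{\tau_m(s),\delta_m(s)}$. But two of your claims are wrong or unsubstantiated, and together they contain essentially all the work. First, the apex angle of the doubly asymptotic base triangle at $m(v)$ is $2\psi_m(s)$ (or $2\pi-2\psi_m(s)$), not $2\wh\psi_m(s)$: since $[m,m(v)]$ is orthogonal to $\H{\dual v}$, that planar angle equals the dihedral angle of $\Fig m s$ along $[m,m(v)]$, which the projection argument of \ref{dihedral} identifies with the angle of $Q_m(\proj s)$ at the $m$-center. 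The hatted angles and the excess do not enter at this stage. Likewise the remaining dihedral angle $\delta_m(s)$ is not ``the excess contributed by the tilt of $m$''; it is $\pm\wab$ with $\omega_\gamma(\alpha,\beta)=\arctan\bigl(\cos\alpha\sin\gamma/(\cos\beta+\cos\alpha\cos\gamma)\bigr)$, an auxiliary angle read off from the trigonometry of $\Delta_m(\proj s)$, and determining its sign is precisely the index analysis you postpone.

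Second, and this is the essential gap: the passage from $2\,\Fau{\alpha,\wab}+2\,\Fau{\beta,\wba}$ to $\Ino{\theta(\proj s)}(\wh\psi_m(s))-\Imi{\theta(\proj s)}(\ex_m(\proj s))$ is not a parity cancellation. The Lobachevsky arguments occurring in $\Fau{\alpha,\wab}$ are $\alpha\pm(\frac\pi2-\wab)$ and $\wab$, and nothing is odd or even under $s\leftrightarrow-s$ in a way that reassembles these six terms into $\lob(\wh\alpha)+\lob(\wh\beta)-\lob(\ex)-\lob(\wh\gamma)+\lob(\gamma)$. This is a genuine trigonometric identity -- the paper's Lemma~\ref{lem_tec} -- proved by checking the degenerate case $\wab\equiv 0$ and matching partial derivatives via identities such as $\bigl|\sin(\alpha+\frac\pi 2-\wab)/\sin(-\alpha+\frac\pi 2-\wab)\bigr|=\bigl|\sin\wh\alpha\,\sin\wh\gamma/(\sin\ex\,\sin\wh\beta)\bigr|$; your proposal asserts its conclusion without supplying it. Finally, your plan to settle the signs by continuation from a base point where all indices equal $+1$ has two defects: $\Index{m(v)}{\dual{\proj s}}{\fac v}=-1$ whenever $\psi_m(s)>\pi/2$, which can occur for every admissible $m$, so the base case need not exist; and the indices jump as $m$ crosses $\H{\dual v}\cup\H{\dual w}$, so continuity of the signed sum across those walls must be verified -- the paper does this only after the explicit case table relating the angles of $\Delta^*$ to those of $\Delta$, and a Schl\"afli-type argument would have to reproduce that bookkeeping rather than replace it.
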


\begin{beweis}
\newcommand{\ha}{\dual f}
\newcommand{\hap}{\dual g}
In the remainder of this proof let $s$ be an arbitrary but fixed oriented edge of $T$ 
and $\wt s$ an element of $\wtS$ such that $\ppi(\wt s)=s$.
Furthermore let 
$f,g$ be the {\cells} of $\wt T$ incident to $\proj {\wt s}$, let 
$v,w$  be the vertices of $\wt T$ incident to $\proj{\wt s}$ and define $k:=\dual v$, $l:=\dual w$. 
\smallskip\newline\indent
Assume first, that 
$\Index{m}{\fac v}{\CH\cK}\cdot\Index{m}{\fac w}{\CH\cK}\neq 0$, 
i.e. $m\not\in\H k\cup\H l$.
\newline
The geodesic line $g$ passing through $m$ and $k$ is invariant under the group 
$G:=\st m\cap\st k$. 
 The elements of $G$ are the isometries of $(\reg m,m)$ fixing $\reg k$. 
Since the $m$-center $\Mi m k$ of $k$ is the only point in $\reg k$ 
invariant under  $G$, the geodesic line $g$ has to pass 
through this point. 
An analogous consideration shows that the geodesic line passing through 
 $m$ and $l$ passes also through 
the $m$-center $\Mi m l$ of the disk metric  $l$. 
If $m\in\partial\MR$ we illustrate this in the half-space model 
$\MR={\bbb C}\times{\bbb R}_+$ with boundary $\partial\MR={\bbb C}\cup\{\infty\}$. In this model the geodesic lines passing through 
$\infty$ are the Euclidean half-lines $\{z\}\times{\bbb R}_+$, $z\in {\bbb C}$ 
and the geodesic lines not passing through $\infty$ are Euclidean semi-circles centered at a point $z\in  {\bbb C}\times\{0\}$. If $m=\infty$, then 
Figure~\ref{orthoschem}{a} shows the intersecting hyperplanes 
$\H k$ and $\H l$. 

\Figur{orthoschem}
{\vspace{-2ex}\hspace*{6em}Figure~\ref{orthoschem}{\rm{a}}\hspace*{10em}Figure~ \ref{orthoschem}{\rm{b}}}{0}{5}

The hyperplane ${\cal H}$ passing 
through $m$, $k$ and $l$  
divides 
$\Fig m {\wt s}$ in two congruent simplices. Let $\Ort m {\wt s}$ be the one containing 
$\ha$. 
Since ${\cal H}$ is perpendicular to $\H k$ and $\H l$, the geodesic line 
$\dual{\proj {\wt s}}=\H k\cap\H l$ intersects ${\cal H}$ perpendicularly.  
Hence,
all but at most three dihedral 
angles of $\Ort m {\wt s}$ are right.
Such simplices are called \new{orthoschemes}. 
Figure~\ref{orthoschem}{b} shows a schematic view of $\Ort m {\wt s}$. 
\newline
In addition, the sum of the dihedral angles  at the vertex 
$\ha$ is $\pi$.  
In fact,  
$\Lim {_{\ha}} {\Ort m {\wt s}}$ is  a geodesic triangle $d$ 
in the Euclidean plane $(\CS\setminus\ha,\ha)$ and  the angles of $d$ coincide with the 
dihedral angles of $\Ort m {\wt s}$  at the vertex $\ha$ (see~\ref{dihedral}). 
If 
$\tau_m(s)$ (respectively, $\delta_m(s)$) denotes the dihedral angles  
at the edge carried by the geodesic line passing through $m$ and $k$  (respectively, $m$ and $\dual f$), then the following formula holds 
(see [Kh]):
$$
\vol \Ort m {\wt s}=\Fau{\tau_m(s),\delta_m(s)},\quad\mbox{where}$$
\begin{equation}\label{Vol_formel}
\Fau{x,y}:=
\frac 14\lob\left(x+\frac\pi 2-y\right)
+\frac 14\lob\left(-x+\frac \pi 2-y \right)+
\frac 12\lob\left(y\right).
\end{equation}

We first determine the angle $\tau_m(s)$. 
In \ref{dihedral} we showed that the dihedral angles of $\Ort m{\wt s}$ at the 
`vertex' $m$ coincide with the angles of the geodesic triangle $\Lim m{\Ort m {\wt s}}$
 in $(\reg m, m)$. 
Every leg of this triangle is contained in the boundary of a 
hyperplane carrying a facet of $\Ort m{\wt s}$. Hence, 
the angle $\tau_m(s)$ coincides with an   
angle enclosed by the geodesic line in $(\reg m,m)$ passing through 
$\Mi m k$, $\Mi m l$ and the geodesic line passing through $\Mi m k$, $\ha$, i.e. $\tau_m(s)=\psi_m(s)$ or $\tau_m(s)=\pi-\psi_m(s)$. 
Since $2\cdot\tau_m(s)$ is a dihedral angle
 of the convex set $\Fig m {\wt s}$, the angle
$\tau_m(s)$ cannot 
be bigger than $\pi/2$. 
Thus,
\begin{equation}\label{elegant}
\tau_m(s):=\left\{
\begin{array}{lll}
\psi_m(s)&\mbox{if}&\psi_m(s)\leq\frac\pi 2,\\
\pi-\psi_m(s)&\mbox{if}&\psi_m(s)\geq\frac\pi 2.
\end{array}
\right.
\end{equation}

Our next step will be to express the index 
$\Index{m(v)}{\dual {\proj {\wt s}}}{\fac v}$ in terms of the triangle  
$\Delta_m(\proj s)$. 
The geodesic line $\dual{\proj{\wt s}}$ divides $\H k$ in two half-planes
 (see Figure~\ref{orthoschem}{a}). 
We have $\Index{m(v)}{\dual {\proj {\wt s}}}{\fac v}=1$ 
(respectively, $-1$) if and only if $m(v)$ is contained in the 
open half-plane carrying $\fac v\setminus \dual{\proj{\wt s}}$ 
(respectively, the half-plane containing no point of $\fac v$). 
In order to relate $\Index{m(v)}{\dual {\proj {\wt s}}}{\fac v}$ with 
$\psi_m(s)$ we project $m(v)$, $\dual {\proj {\wt s}}$ and $\H k$ to 
$\partial\MR$. We have  
$$\Lim m {m(v)}\in\Lim m {\H k}\supset\Lim m {\fac v}
\supset\Lim m{\dual {\proj {\wt s}}}.$$
The set $\Lim m {\H k}$ is again a conformal disk contained in $\reg m$ and 
 bounded by $\partial\reg k$. 
We denote the disk metric with regular domain 
$\Lim m {\H k}$ by $k^*$.  
Since $\H k=\H{k^*}$, the geodesic line  carrying $[m,m(v)]=[m,k]$   
 passes through $\Mi m k$ and $\Mi m {k^*}=\Lim m {m(v)}$. 
 \newline
The geodesic line $\Lim m {\dual{\proj{\wt s}}}$ in $(\reg m, m)$ divides 
the disk $\reg {k^*}$ in two open half-disks. Let $d$ be the one containing 
no point of $\Lim m {\fac v}$ (Figure~\ref{index_mv}). Then  
$$\Index{m(v)}{\dual {\proj {\wt s}}}{\fac v}=
\left\{\begin{array}{rl}
            -1 &\mbox{if}\quad \Mi{m}{k^*}\in d,\\
            0 &\mbox{if}\quad\Mi{m}{k^*}\in \Lim m {\dual{\proj{\wt s}}},\\
            +1 &\mbox{else.}
\end{array}\right.
$$
In \ref{dihedral} we showed that any geodesic line  in $(\reg m,m)$ containing  
the point $\Mi m k$ is of the form $\Lim m {M}$, where ${M}$ is a hyperplane 
in $\MR$ passing through $m$ and $k$. Since these hyperplanes passes also 
through $\Mi m {k^*}$, every geodesic through $\Mi m k$ contains the point 
$\Mi m {k^*}$. In particular we conclude that $\Mi m k=\Mi m {k^*}$ if 
$m\in\partial\MR\cup\MD$. It is not difficult to verify that  
$\Mi m k=\Mi m {k^*}$ if and only if $\Index{m}{\fac v}{\CH\cK}=1$. 
\newline
For pairwise distinct points $A,B,C\in \reg m$ let $[A,B]_m$ denote the geodesic segment in 
$(\reg m,m)$ joining $A$, $B$ and $\measuredangle_m(A,B,C)$ the angle  of the triangle $A,B,C$ at the vertex $B$. We have 
$\measuredangle_m(A,\Mi m k,C)=\measuredangle_m(A,\Mi m {k^*},C)$
 (see Figure~\ref{bian_deg}a if $B:=\Mi m k\neq\Mi m {k^*}=:B'$). 
Hence, 
 $2\psi_m(s)$ is just the angle obtained by 
turning $[\Mi{m}{k^*},\ha]_m$ into $[\Mi{m}{k^*},\hap]_m$ without passing through a 
vertex of $\Lim m {\fac v}$ (Figure~\ref{index_mv}) and 
\begin{equation}\label{Fuetzli}
\Index{m(v)}{\dual {\proj {\wt s}}}{\fac v}=
\left\{\begin{array}{rl}
            1 &\mbox{if}\quad\psi_m(s)<\frac\pi 2,\\
            0 &\mbox{if}\quad\psi_m(s)=\frac\pi 2,\\
            -1 &\mbox{if}\quad\psi_m(s)>\frac\pi 2.
\end{array}\right.
\end{equation}

\Figur{index_mv}
{\vspace*{-3em}
$$\Index{m(v)}{\dual {\proj {\wt s}}}{\fac v}=1\hspace{2em}
\Index{m(v)}{\dual {\proj {\wt s}}}{\fac v}=0\hspace{2em}
\Index{m(v)}{\dual {\proj {\wt s}}}{\fac v}=-1$$
\vspace*{-1.5ex}
\newline
Figure~\ref{index_mv}}{-1}{5}

Finally we have to determine the angle $\delta_m(s)$.  
Let 
$\Delta$ be the triangle in the congruence class $\Delta_m(\proj s)$ with 
vertices $\Mi m k$, $\Mi m l$, $\ha$, and 
 $\Dest$ the  triangle contained in $\reg{k^*}\cup\reg{l^*}$ with
 vertices $\Mi m {k^*}$, $\Mi m {l^*}$, $\ha$.  
To simplify the notation 
we denote the angles $\psi(s),\psi(-s),\pi-\theta(\proj s)$ of $\Delta$ by 
$\alpha,\beta,\gamma$ and the angles of $\Dest$ at the vertices  
 $\Mi m {k^*},\Mi m {l^*},\ha$ by $\alpha^*,\beta^*,\gamma^*$. 
 If $m\in\MR$, then 
Figure~\ref{delts_stern3_oh}$\ $  illustrates the relations between the angles 
$\alpha,\beta,\gamma$ and $\alpha^*,\beta^*,\gamma^*$. 
It shows the triangles 
$\Delta$ and $\Delta^*$ in the Poincar\'e model 
 $\MR=\{x\in{\bbb R}^3\mid |x|<1\}$ with $m=0\in{\bbb R}^3$, 
i.e. $(\reg m,m)$ is the standard metric sphere in ${\bbb R}^3$.
\subsubsection*{}
\Figur{delts_stern3_oh}
{\vspace*{-2em}$$\Mi m{l^*}=\Mi m l,\ \Mi m{k^*}\neq\Mi m{k}\hspace{2em}
\Mi m{l^*}\neq\Mi m l,\ \Mi m{k^*}\neq\Mi m{k}$$
\vspace*{-1em}
\newline
Figure~\ref{delts_stern3_oh}{\rm a}\hspace*{10em}Figure~\ref{delts_stern3_oh}{\rm b}}{0}{4}

\noindent
If $m\in\partial\MR\cup\MD$, then  $\Dest=\Delta$.  
Since $\Lim m{\Fig m{\wt s}}$ 
is the convex hull  of the set $\{\ha,\hap,\Mi m{k^*}\}$ in $(\reg m, m)$, the angle 
$\delta_m(s)$ is  bounded by the altitude of $\Dest$ and the leg 
passing through  $\Mi m {k^*}$, $\dual f$ (Figure~\ref{delta}). 

\Figur{delta}{
Figure~\ref{delta}}{-4}{4}

\noindent
A short computation using the trigonometric relations in the triangle $\Dest$ 
yields the following formula:
\begin{equation}
\delta_m(s)\label{auch_das_noch}=
\left\{\begin{array}{rl}
            \wabst &\mbox{if}\quad \alpha^*\leq\frac\pi 2,\\
            -\wabst &\mbox{if}\quad \alpha^*\geq\frac\pi 2,
\end{array}\right.\\
\end{equation}
where 
\begin{eqnarray*}
\omega_z(x,y):=\arctan\left(\frac{\cos x\sin z}{\cos y+\cos x\cos z}\right)
\in (-\frac\pi 2,\frac\pi 2).
\end{eqnarray*}
From the definition of $\Delta$, $\Dest$ and the fact that 
$$\omega_{\pi-\gamma}(\pi-\alpha,\beta)=
\omega_{\pi-\gamma}(\alpha,\pi-\beta)=-\omega_\gamma(\alpha,\beta)$$
 we immediately 
deduce the following relations (Figure~\ref{delts_stern3_oh}):
\medskip
\begin{center}
{\small
\begin{tabular}{|c|c||c|c|c|c||c|}
\hline
$\Index{m}{\fac v}{\CH\cK}$&$\Index{m}{\fac w}{\CH\cK}$&
$\alpha^*$&$\beta^*$&$\gamma^*$&$\wabst$&Remark\\
\hline\hline
$+1$&$+1$&$\alpha$&$\beta$&$\gamma$&$+\wab$&$\Delta=\Delta^*$\\
$-1$&$+1$&$\alpha$&$\pi-\beta$&$\pi-\gamma$&$-\wab$&Fig. \ref{delts_stern3_oh}a\\
$-1$&$-1$&$\pi-\alpha$&$\pi-\beta$&$\gamma$&$+\wab$&Fig. \ref{delts_stern3_oh}b\\
$+1$&$-1$&$\pi-\alpha$&$\beta$&$\pi-\gamma$&$-\wab$&\\
\hline
\end{tabular}
}
\end{center}
\medskip
In particular we conclude that 
$$\wabst=\Index{m}{\fac v}{\CH\cK}\cdot\Index{m}{\fac w}{\CH\cK}\cdot\wab$$
and with (\ref{Fuetzli}) we get 
\begin{eqnarray*}
\alpha^*<\frac\pi 2&\Longleftrightarrow
&\Index{m}{\fac w}{\CH\cK}\cdot\Index{m(v)}{\dual {\proj {\wt s}}}{\fac v}=+1\\
\alpha^*>\frac\pi 2&\Longleftrightarrow
&\Index{m}{\fac w}{\CH\cK}\cdot\Index{m(v)}{\dual {\proj {\wt s}}}{\fac v}=-1.\\
\end{eqnarray*}
Combining this with (\ref{auch_das_noch}) yields  
$$\delta_m(s)=\varepsilon_m(s)\cdot\wab.$$
Since $\Fau{x,\pm y}=\Fau{\pi-x,\pm y}=\pm\Fau{x,y}$, we finally get 
\begin{eqnarray*}
\Fau{\alpha,\wab}&=&\Fau{\tau_m(s),\wab}=\varepsilon_m(s)\Fau{\tau_m(s),\delta_m(s)}
=\varepsilon_m(s)\vol\Ort m {\wt s}\\
&=&
\frac{\varepsilon_m(s)}2\vol\Fig m s.
\end{eqnarray*}
\vspace*{-0.3em}
The technical Lemma~\ref{lem_tec} completes the proof.
\medskip\newline\indent
If $\Index{m}{\fac v}{\CH\cK}\cdot\Index{m}{\fac w}{\CH\cK}$ is zero, then  
$m\in \H{k}\cup \H{l}\subset\MR$. We proved that for every $m'\in\MR$ with $m'\not\in  
\H{k}\cup\H{l}$ the volume formula 
$$
\varepsilon_{m'}(s)\cdot\vol\Fig {m'} s + 
\varepsilon_{m'}(-s)\cdot\vol\Fig {m'} {-s}=
\Ino{\theta (\proj s)}(\wh\psi_{m'}(s))
 -\Imi{\theta(\proj s)}(\ex_{m'}(\proj s))
$$
holds. Since both sides of this formula are continuous in $\MR$, 
it still holds if $m=m'$.
\end{beweis}
%

\begin{lemma}\label{lem_tec}
For $\gamma\in{\bbb R}$ we define 
$\omega_\gamma, {\cal V}:{\bbb R}^2\longrightarrow {\bbb R}$ by 
\begin{eqnarray*}
\ \omega_\gamma(x,y)&:=&
\arctan\left(\frac{\cos x\sin \gamma}{\cos y+\cos x\cos \gamma}\right)
\\
\Fau{x,y}&:=&
\frac 14\lob\left(x+\frac\pi 2-y\right)
+\frac 14\lob\left(-x+\frac \pi 2-y \right)+
\frac 12\lob\left(y\right).
\end{eqnarray*}
If $\alpha$, $\beta\in{\bbb R}$, then
\begin{eqnarray*}
\lefteqn{
2\,\Fau{\alpha,\wab}+2\,\Fau{\beta,\wba}=
}\nonumber\\
& &
\Imi{\gamma}\left(\frac{\alpha-\beta-\gamma+\pi}2\right)
-\Ino\gamma\left(\frac{\alpha+\beta+\gamma-\pi}2\right).
\end{eqnarray*}
\end{lemma}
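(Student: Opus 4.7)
The identity is a purely analytic statement about values of the Lobachevsky function $\lob$. My plan is to verify it by showing both sides have matching partial derivatives in $\alpha$ and coincide at a convenient symmetric base point.

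The key preliminary is the symmetry
$$\wab + \wba \equiv \gamma \quad (\mbox{mod } \pi),$$
which I would establish by writing $\omega_\gamma(x,y) = \arg(\cos y + \cos x\cdot e^{i\gamma})$ and observing that
$$(\cos\alpha + \cos\beta\, e^{i\gamma})(\cos\beta + \cos\alpha\, e^{i\gamma}) = e^{i\gamma}\bigl[(\cos\alpha + \cos\beta\cos\gamma)^2 + \cos^2\beta\,\sin^2\gamma\bigr],$$
a nonnegative real multiple of $e^{i\gamma}$. Differentiating in $\alpha$ then yields the crucial cancellation $\partial_\alpha\wab = -\partial_\alpha\wba$, together with closed-form expressions for $\partial_\alpha\wab$ obtained by differentiating the $\arctan$ directly.

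I would then expand both sides via the definitions of $\Fau$, $\Ino{\gamma}$, and $\Imi{\gamma}$; using Proposition~1(4) together with oddness and $\pi$-periodicity of $\lob$, the constants $2\lob((\pi-\gamma)/2)$ and $2\lob(\gamma/2)$ collapse into $\lob(\gamma)$, so both sides are displayed as explicit linear combinations of $\lob$-values. Next, differentiate both sides in $\alpha$ (with $\beta, \gamma$ fixed) via $\lob'(x) = -\log|2\sin x|$: the right side becomes an explicit log-ratio of sines of $(\pi\pm\alpha\mp\beta - \gamma)/2$, while on the left the explicit derivatives combine with the implicit contributions through $\wab, \wba$; the cancellation $\partial_\alpha\wab = -\partial_\alpha\wba$ reduces the implicit part to a single tractable term, and repeated use of the product-to-sum formula $2\sin A\sin B = \cos(A-B) - \cos(A+B)$ matches the two derivatives term by term. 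The constant of integration is fixed at the symmetric point $\alpha = \beta$, where $\wab = \wba = \gamma/2$ and $\Imi{\gamma}((\pi-\gamma)/2) = 0$ (the midpoint of its domain), so both sides collapse directly to $\lob(\alpha + (\pi-\gamma)/2) + \lob(-\alpha + (\pi-\gamma)/2) + 2\lob(\gamma/2)$.

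The principal obstacle is the sign and branch bookkeeping: since $\arctan$ maps into $(-\pi/2,\pi/2)$ while $\alpha, \beta, \gamma$ range over $\bbb R$, the identity $\wab + \wba \equiv \gamma$ holds only modulo $\pi$, and the absolute values inside $\log|2\sin(\cdot)|$ introduce further sign cases. All of these are reconciled using the $\pi$-periodicity and oddness of $\lob$ from Proposition~1, so that once the identity is verified on a generic dense open subset of parameter space it extends by continuity to all real $\alpha, \beta$.
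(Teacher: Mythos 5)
Your proposal is correct and follows essentially the same route as the paper: both reduce the right-hand side to an explicit combination of $\lob$-values via Proposition~2 and the identity $-2\lob(\frac{\pi-\gamma}2)+2\lob(\frac\gamma2)=\lob(\gamma)$, show that the derivatives of the two sides agree off a measure-zero set using the cancellation $\partial_\alpha\wab=-\partial_\alpha\wba$ coming from $\wab+\wba=\gamma\mod\pi$ (which the paper merely asserts and you actually prove with the complex-factorisation argument), and then fix the constant of integration by continuity, with the $\pi$-periodicity and oddness of $\lob$ absorbing all branch ambiguities of the $\arctan$. The only differences are minor: you differentiate in $\alpha$ alone and anchor at the symmetric point $\alpha=\beta$, where $\wab=\wba\equiv\gamma/2$ and $\Imi{\gamma}(\frac{\pi-\gamma}2)=0$ (this evaluation does check out), whereas the paper computes all three partial derivatives and anchors on the locus $\wab\equiv 0\mod\pi$.
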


\begin{beweis}
It is understood that the function $\arctan$ is defined on ${\bbb R}\cup\{+\infty,-\infty\}$,
 i.e. we define $\arctan(+\infty)=\pi/2$ and $\arctan(-\infty)=-\pi/2$. 
Let 
$$\ex:=(\alpha+\beta+\gamma-\pi)/2,\quad 
\wh\alpha:=\alpha-\ex,\quad\wh\beta:=\beta-\ex,\quad\wh\gamma:=\gamma-\ex.$$ 
Proposition~\ref{prop2} in \ref{functional} yields  
$$\Imi{\gamma}\left(\wh\alpha\right)
-\Ino\gamma\left(\ex\right)=\lob(\wh\alpha)+\lob(\wh\beta)-\lob(\ex)-
\lob(\wh\gamma)+\lob(\gamma).$$
Hence, we have to show that 
\begin{equation}\label{eqn_AAA}
2\,\Fau{\alpha,\wab}+2\,\Fau{\beta,\wba}=
\lob(\wh\alpha)+\lob(\wh\beta)-\lob(\ex)-
\lob(\wh\gamma)+\lob(\gamma).
\end{equation}
Note that both sides of (\ref{eqn_AAA})  
are continuous in $\alpha$, $\beta$, $\gamma$
and that
\begin{equation}\label{eqn_AA1}
\wab+\wba=\gamma\mod\pi. 
\end{equation}
We will prove (\ref{eqn_AAA}) if $\wab\equiv 0\mod \pi$ and  we will 
show that the partial derivatives of both sides of (\ref{eqn_AAA}) 
coincide almost everywhere.  
\newline
We have $\wab\equiv 0\mod \pi$ if and only if 
$\alpha\equiv \pi/2\mod\pi$ or $\gamma\equiv 0\mod\pi$. 
If $\gamma\equiv 0\mod\pi$ both sides of (\ref{eqn_AAA}) are zero. 
If $\alpha\equiv \pi/2\mod\pi$ we get $\Fau{\alpha,0}=0$ and 
Proposition~\ref{prop2} yields 
$$\lob(\alp)-\lob(\ex)
=
\lob\left(\frac\pi 2\pm\frac\pi 4-\frac\beta 2-\frac\gamma 2\right)
+\lob\left(\pm\frac\pi 4-\frac\beta 2-\frac\gamma 2\right)
=\frac12\lob(-\beta+\frac\pi 2-\gamma )
$$
$$
\!\!\!\!\lob(\bet)-\lob(\gam)
=
\lob\left(\frac\beta 2-\frac\gamma 2\pm\frac\pi 4\right)+
\lob\left(\frac\pi 2+\frac\beta 2-\frac\gamma 2\pm\frac\pi 4\right)
=\frac12\lob(\beta+\frac\pi 2-\gamma).
$$
Thus, the right side of (\ref{eqn_AAA}) is just $2\Fau{\beta,\gamma}$. 
\newline
It is not difficult to verify that 
both sides of (\ref{eqn_AAA}) are differentiable if 
$$\wh\alpha,\wh\beta,\wh\gamma,\ex\not\equiv 0\mod \pi 
\quad\mbox{and}\quad \wab,\wba\not\equiv 0\mod\pi/2.$$ 
In this case we will calculate the derivatives in direction 
$\alpha,\beta$ and $\gamma$. We first state the following 
two identities which can be verified with the addition formulas 
in trigonometry:
\begin{eqnarray*}
\left|\frac
{\sin(\alpha+\frac\pi 2-\wab)}{\sin(-\alpha+\frac\pi 2-\wab)}
\right|&=&
\left|\frac{\sin{\wh\alpha} \sin{\wh\gamma} }
{\sin{\ex}\sin{\wh\beta} }\right|
,
\\
\left|\frac
{\sin(\alpha+\frac\pi 2-\wab)\sin(-\alpha+\frac\pi 2-\wab)}
{\sin^2\wab}
\right|&=&
\left|\frac{4\sin{\wh\alpha}  \sin{\wh\gamma} 
\sin{\ex}  \sin{\wh\beta} }{\sin^2\gamma}\right|.
\end{eqnarray*} 
Hence, we get 
\begin{eqnarray*}
4\frac\partial{\partial\alpha}\Fau{\alpha,\wab}&=&
-\log\left|\frac{\sin{\wh\alpha}  \sin{\wh\gamma} }
{\sin{\ex}  \sin{\wh\beta} }\right|
+\frac{\partial\wab}{\partial\alpha}
\log
\left|\frac{4\sin{\wh\alpha}  \sin{\wh\gamma} 
\sin{\ex} \sin{\wh\beta} }{\sin^2\gamma}\right|\\
4\frac\partial{\partial\alpha}\Fau{\beta,\wba}&=&
\frac{\partial\wba}{\partial\alpha}
\log
\left|\frac{4\sin{\wh\alpha}  \sin{\wh\gamma} 
\sin{\ex}  \sin{\wh\beta} }{\sin^2\gamma}\right|,\\
4\frac\partial{\partial\gamma}\Fau{\alpha,\wab}&=&
\frac{\partial\wab}{\partial\gamma}
\log
\left|\frac{4\sin{\wh\alpha} \sin{\wh\gamma} 
\sin{\ex}  \sin{\wh\beta} }{\sin^2\gamma}\right|.
\end{eqnarray*}
Using (\ref{eqn_AA1}), the derivatives of the left side of 
(\ref{eqn_AAA}) in direction 
$\alpha$, $\beta$ and $\gamma$ reduces to 
$$
-\frac12\log\left|\frac{\sin{\wh\alpha}  \sin{\wh\gamma} }
{\sin{\ex}  \sin{\wh\beta} }\right|,
\qquad
-\frac12\log\left|\frac{\sin{\wh\beta}  \sin{\wh\gamma} }
{\sin{\ex}  \sin{\wh\alpha} }\right|,
\qquad
\frac12\log
\left|\frac{4\sin{\wh\alpha}  \sin{\wh\gamma} 
\sin{\ex}  \sin{\wh\beta} }{\sin^2\gamma}\right|.
$$
Therefore, they  coincide with the derivatives of the right 
side of 
(\ref{eqn_AAA}). 
\end{beweis}
\bigskip


\bigskip
E-mail address:\quad
{\tt{walter.braegger[at]gmail.com}}
\end{document}